\numberwithin{equation}{section}
\newtheorem{thm}{Theorem}[section]
\newtheorem{lemma}[thm]{Lemma}
\newtheorem{prop}[thm]{Proposition}
\newtheorem{cor}[thm]{Corollary}
{\theorembodyfont{\rmfamily}

\newtheorem{rmk}[thm]{Remark}
}
\newcommand{\essinf}{\operatorname{essinf}}
\newcommand{\qed}{\hfill \mbox{\raggedright \rule{.07in}{.1in}}}
\newenvironment{proof}{\vspace{1ex}\noindent{\bf
Proof}\hspace{0.5em}}{\hfill\qed\vspace{1ex}}
\newenvironment{pfof}[1]{\vspace{1ex}\noindent{\bf Proof of
#1}\hspace{0.5em}}{\hfill\qed\vspace{1ex}}
\newcommand{\eps}{{\epsilon}}
\newcommand{\M}{\mathcal{M}}
\newcommand{\R}{\mathbb{R}}
\newcommand{\Z}{\mathbb{Z}}
\newcommand{\N}{\mathbb{N}}
\newcommand{\cB}{\mathcal{B}}
\newcommand{\F}{F}
\title{H\"older continuity of measures for heavy tail potentials}
 \author{Godofredo Iommi
 \thanks{Facultad de Matem\'aticas,
Pontificia Universidad Cat\'olica de Chile (UC), Avenida Vicu\~na Mackenna 4860, Santiago, Chile, godofredo.iommi@gmail.com.}
\and
Dalia Terhesiu
\thanks{Mathematisch Instituut,
University of Leiden, Niels Bohrweg 1, 2333 CA Leiden, Netherlands, daliaterhesiu@gmail.com.
}
\and
{Mike Todd}
\thanks{Mathematical Institute,
University of St Andrews,
North Haugh,
St Andrews,
KY16 9SS,
Scotland, m.todd@st-andrews.ac.uk.
}
}
\date{\today}
\begin{document}

\maketitle


\begin{abstract}
   For a class of potentials $\psi$ satisfying a condition depending on the roof function of a suspension (semi)flow,
 we show an EKP inequality, which can be interpreted as a H\"older continuity property in the weak${^*}$ norm of measures, with respect to the pressure of those measures,
 where the H\"older exponent depends on the $L^q$-space that $\psi$ belongs to. This also captures a new type of phase transition for intermittent (semi)flows (and maps).
\end{abstract}

\section{Introduction}

There exist a wide range of dynamical systems having a unique measure of maximal entropy. That is, there exists a unique measure $\mu_0$ satisfying $h(\mu_0)= \sup\{h(\mu) : \mu \in \M\}$, where $h(\mu)$ denotes the entropy of the measure $\mu$ and $\M$ the space of invariant probability measures. 
If the phase space is compact and  the entropy map is upper semi-continuous (with respect to the weak* topology), if  $(\mu_n)_n$ is a sequence in $\M$ such that $\lim_{n \to \infty} h(\mu_n)=h(\mu_0)$, then $(\mu_n)_n$ 
converges to $\mu_0$. In particular, for any Lipschitz  function $\psi$, we have
$\int \psi \, d \mu_n \rightarrow \int \psi \, d \mu_0$. Polo  \cite[Theorem 4.1.1]{Po11} made this statement effective for  hyperbolic automorphisms of the tori and its corresponding measure of maximal entropy  $\mu_0$ (the Haar measure in case of linear automorphism). Indeed, he proved that there exists a constant $C>0$ such that for any invariant probability measure $\mu$ and any Lipschitz function $\psi$, with Lipschitz constant $L$, 
\begin{equation} \label{eq:polo}
\left|	\int \psi \, d \mu - \int \psi \, d \mu_0	\right| \leq  CL \left(h(\mu_0)-h(\mu)	\right)^{1/3}.
\end{equation}
This result can be thought of as a H\"older continuity property in the weak* norm of measures.  According to Polo \cite[p.6]{Po11}  it was Einsiedler who outlined the argument for the proof of equation \eqref{eq:polo} in the case of $\times 2$ map. Kadyrov \cite[Theorem 1.1]{Kad15} later extended this result to sub-shifts of finite type (defined over finite alphabets). 
In his case, instead of a cubic root, he had a quadratic root.  
Inequalities as \eqref{eq:polo} are now called EKP-inequalities after these authors. 
The case of countable Markov shifts has been studied recently. In that setting the phase space is no longer compact 
and the entropy map is not always upper semi-continuous. Moreover, there  
are cases in  which there is no measure of maximal entropy. Therefore, further assumptions are required in order for 
EKP-inequalities to make sense. For example, R\"uhr \cite[Theorem 1.1]{Ruh21} studied countable Markov shifts satisfying a combinatorial assumption (the BIP-property). This class of systems shares many properties with sub-shifts of finite type. However, they  have infinite entropy, thus EKP-inequalities do not make sense for the measures of maximal entropy.  
Instead, he considered the Gibbs measure associated to a locally H\"older function of finite pressure. 
In that setting, the right hand side of the EKP-inequality has the free energy of the measures (instead of the entropy) 
and a square root. Since systems having the BIP property 
are similar to subshifts of finite type, the arguments in the proof are close to those developed by Kadyrov. 

Sarig and  
R\"uhr  recently studied finite entropy countable Markov shifts. In this case, instead of making a strong assumption 
on the system, they consider strongly positive recurrent (SPR) functions. 
Potentials in this class have unique equilibrium measures and the corresponding transfer operator acts with spectral 
gap in appropriate Banach spaces \cite[Theorem 2.1]{CyrSar09}. They proved \cite[Theorem 6.1]{RuhSar22} that if $\phi$ 
is an SPR regular function, $\mu_{\phi}$ is the associated equilibrium measure and $\psi$ a regular function, 
then for any invariant measure $\mu$ with sufficiently large free energy $P_{\mu}(\phi)$ 
(see Section~\ref{sec:tf}) we have
\begin{equation} \label{eq:ruh-sar}
\left|	\int \psi \, d \mu - \int \psi \, d \mu_\phi	\right| \leq  C\sqrt2 \sigma \sqrt{P(\phi)- P_{\mu}(\phi)},
\end{equation}
where $P(\phi)$ is the pressure of $\phi$ and $\sigma^2$ is the asymptotic variance of $\psi$ with respect to $\mu_\phi$ 
(which in turn is related to the second derivative of the pressure function) and $C$ is a constant which can be taken close to 1 provided $\left|	\int \psi \, d \mu - \int \psi \, d \mu_\phi	\right|$ is small. They also provide a version where the integrals can be far apart, and where $C \sigma$ is replaced by $C'\|\psi\|_\beta$ for a suitable norm, where $C'$ is independent of $\psi$.

In this article we prove EKP-inequalities for continuous time dynamical systems which may not be SPR and can have unbounded entropy, for some unbounded $\psi$. Indeed, we study suspension (semi)flows over Gibbs Markov maps  $T:Y\to Y$, and unbounded roof function $\tau:Y\to (0, \infty)$
with $\inf\tau>0$ satisfying certain additional assumptions.  Our main focus is towards systems with weak hyperbolicity properties. We denote the (semi)flow by $(\F_t)_t$ and the suspension space by $Y^{\tau}$.   We refer to Section~\ref{sec:setup} for details.  Consider a regular potential $\phi$ and its corresponding positive entropy equilibrium state $\nu_{\phi}$. In our main results we establish several EKP-inequalities for $\nu_{\phi}$, for a regular function $\psi$ and for invariant measures $\nu$ satisfying $\int \psi \, d \nu > \int \psi \, d \nu_{\phi}$. We bound the difference $\int \psi \, d \nu - \int \psi \, d \nu_{\phi}$ with terms of the form $(P(\phi) - P_{\nu}(\phi))^{\rho}$. The values of $\rho$ are related to dynamical properties of the system. 

In order to be more precise, we have two basic assumptions. The first (GM0) describes the decay of the tail of the measure on the base map $T$. It essentially says that there exists $\beta >1$ such that $\mu(\tau >x) \leq cx^{-\beta}$.
In order to state our second assumption (GM1), recall that every potential $\psi$ for the (semi)flow has an induced version $\bar \psi$ defined on $Y$. The assumptions of our results are in terms of the induced potentials.  It states that $\bar \psi= C_0 -\psi_0$, where $0 \leq \psi_0 \leq C_1 \tau^{\gamma}$ and $\gamma \in (\beta-1, \beta)$. 
We stress that these assumptions are fulfilled by a wide range of functions $\psi$.   

In our first result, Theorem~\ref{thm:firstmain}, we assume that $\beta / \gamma >3$. 
We show that there exists $\epsilon>0$ such that for any flow invariant probability measure $\nu$, with  
$\int\psi~d\nu\in \left(\int\psi~d\nu_\phi, \int\psi~d\nu_\phi+\eps\right)$, we have
 \begin{equation*}
 \int\psi\,d\nu-\int\psi\,d\nu_{\phi} 
     \le C_{\phi, \psi}  {\sqrt 2}\sigma\sqrt{P_{\nu_\phi}(\phi)-P_\nu(\phi)},
 \end{equation*}
 where $\sigma^2$ is the asymptotic variance of $\psi$ with respect to $\nu_{\phi}$ and where $C_{\phi, \psi} \geq 1$ tends to $1$ as $\int \psi \, d\nu \to \int \psi \, d\nu_{\phi}$.

We note that in the expression above, as well as those in (a) and (b) below, are only useful when $\int\psi\,d\nu>\int\psi\,d\nu_{\phi}$.  It can be shown in the main examples of this theory that this is intrinsically necessary\footnote{though if $\mu(\tau >x)$ decays exponentially then the proofs can be rewritten to recover a statement like \eqref{eq:ruh-sar}.}, rather than an artefact of the proof,  i.e., we can not put absolute value signs on the left hand side of these equations and allow $\int\psi\,d\nu<\int\psi\,d\nu_{\phi}$, see Remark~\ref {rmk:EKPfail}.

In our second main result, Theorem~\ref{thm:secmain}, we consider the cases in which $\beta/ \gamma \in (1,2]$ 
and $\beta / \gamma \in (2,3)$ (with some additional assumptions). This result captures a new type of phase transition. 
Indeed,  while item (b)  below shows a EKP inequality in the case $\beta/\gamma\in (2,3)$ (when the Central Limit Theorem (CLT) is present), 
item (a) gives a new type of EKP inequality with the exponent changing from $1/2$ to one depending on the 
ratio $\beta/\gamma$. Interestingly, this result captures the transition form stable law to CLT in terms 
of the H\"older continuity of the pressure (see Remark~\ref{rmk:st}.)

\begin{itemize}
 \item[(a)]  If $\beta/\gamma\in (1,2]$,
then
\[
 \int\psi\,d\nu-\int\psi\,d\nu_{\phi}\le c_2 (P_{\nu_\phi}(\phi)-P_\nu(\phi))^{\frac{\beta-\gamma}{\beta-\gamma+1}}.
 \]

\item[(b)] If $\beta/\gamma\in (2,3)$,  then  
\[
 \int\psi\,d\nu-\int\psi\,d\nu_{\phi}\le  c_3 {\sqrt 2}\sigma\sqrt{P_{\nu_\phi}(\phi)-P_\nu(\phi)}.
 \]
\end{itemize}

The above results give the most interesting behaviour, and best constants, when $ \int\psi\,d\nu$ and $\int\psi\,d\nu_{\phi}$ are close to each other, but we also give a result Theorem~\ref{thm:bigint} similar to the above when these integrals are far away from each other.

The proof of our results is based on asymptotic estimates of the pressure function $s \mapsto P_F(\phi + s\psi)$. 
For example, in Proposition~\ref{prop:gm} we prove, under the assumptions (GM0) and (GM1), that if $q_1 \in [1, \beta / \gamma]$ then $P_F(\phi + s\psi)$ is of class $C^{q_1}$. 
In Proposition~\ref{prop:gmsec}, under (GM0)  and an assumption on the decay of the tail of the measure, we establish estimates of the type:   if $\beta/\gamma\in (1,2]$, then $ P_F''(\phi + s\psi)=C s^{\beta-\gamma-1}(1+o(1))$. Moreover, if  $\beta/\gamma\in (2, 3)$, then $P_F''(\phi + s\psi)=-C s^{\beta-2\gamma-1}(1+o(1))$. These estimates are essential in the proofs of the main results and are obtained building up from~\cite{BruTerTod19, BruTerTod21, MelTer17}. 
With these in hand, we make use of the restricted pressure in a similar way to \cite{RuhSar22}.

In Section~\ref{sec:examples}, examples of dynamical systems for which the results obtained in the article apply 
are provided. We construct suspension flows over maps exhibiting weak forms of hyperbolicity. 
Indeed, the class of interval maps we consider have parabolic fixed points. This shows the strength of our main results.\\

\textit{Acknowledgements.} 
GI was partially supported by Proyecto Fondecyt 1230100.
DT would like to thank Henk Bruin for discussions on related topics during the
Research-in-Teams project
0223 ``Limit Theorems for Parabolic Dynamical Systems''
at the Erwin Schr\"odinger Institute, Vienna.
MT would like to thank Pontificia Universidad Cat\'olica de Chile where part of this research was done, supported by Proyecto Fondecyt 1230100, and thanks the University of Leiden for hosting a visit where part of this research was done.  He is also partially supported by the FCT (Fundação para a Ciência e a Tecnologia) project 2022.07167.PTDC. We thank the referee for their comments, in particular for a question which led to the inclusion of Theorem~\ref{thm:bigint}.

\section{Suspension flows over Gibbs Markov (GM) maps with unbounded roof $\tau$}
\label{sec:setup}

\subsection{Thermodynamic formalism for suspension flows} \label{sec:tf}

 Let $T:Y\to Y$ be a map and $\tau:Y\to (0, \infty)$ a positive function with $\inf\tau>0$. Consider the space  $Y^\tau=Y\times [0, \infty)/\sim$ where $(y, \tau(y))\sim (T(y), 0)$. The \emph{suspension (semi)flow} over $T$ with \emph{roof function} $\tau$ is the (semi)flow $(\F_t)_t$ defined by $\F_{t'}(y, t) = (y, t+t')$ for $t'\in [0,\tau(y))$.

Denote by $\M_{\F}$ and respectively $\M_T$ the spaces of $F$-invariant  and $T-$invariant probability measures. There is a close relation between these spaces. Indeed, consider the subset of $\M_T$ for which $\tau$ is integrable. That is,
\begin{equation}
\M_T(\tau):= \left\{ \mu \in \mathcal{M}_{T}: \int \tau \,  d \mu < \infty \right\}.
\end{equation}
Let $m$ denote the one-dimensional Lebesgue measure and  $\mu \in \M_T(\tau)$. It follows directly from classical results 
by Ambrose and Kakutani \cite{AmbKak42} that 
\begin{equation}
\nu= \frac{(\mu \times m)|_{Y^\tau} }{(\mu \times m)(Y^\tau)} = \frac{(\mu \times m)|_{Y^\tau} }{\int\tau~d\mu} \in \M_{F}. \label{eq:AK}
\end{equation}
Actually, under the assumption that $\inf\tau>0$,  equation \eqref{eq:AK} establishes a one-to-one correspondence  between measures in $\M_F$ and measures in $\M_T(\tau)$.  We say that $\mu$ is the \emph{lift} of $\nu$ and that $\nu$ is the \emph{projection} of $\mu$.  In the setting of this article, every measure in $\M_F$ lifts to some measure in $\M_T$.

The entropies of measures  as in equation \eqref{eq:AK} are related. Indeed,  for $\mu\in \M_T$ and $\nu \in \M_F$  denote by  $h_T(\mu)$ and $h_F(\nu)$ the corresponding entropies. Abramov \cite{ab} proved that   $h_F(\nu) = \frac{h_T(\mu)}{\int\tau \, d\mu}$.

It is also possible to relate the integral of a function on the flow to a corresponding one on the base. For $\phi:Y^{\tau} \to \R$, we define its induced version  $\bar\phi(x):Y \to \R$ by   $\bar\phi(x) =  \int_0^{\tau(x)} \phi\circ \F_t(x, 0)\, dt$. Let $\mu\in \M_T$ and $\nu \in \M_F$  be related as in equation \eqref{eq:AK}. Kac's formula establishes the following relation:  $\int\phi \, d\nu= \frac{\int\bar\phi \, d\mu}{\int\tau \,d\mu}$.

Having related the spaces of invariant measures, the corresponding entropies and  integrals, it should come as no surprise that thermodynamic formalism on the flow is related to that on the base.  Given  a regular function $\phi:Y^{\tau}  \to \R$, we define the \emph{pressure} of $\phi$ (with respect to the (semi)flow $F$)  by
\begin{equation*}
P_F(\phi) := \sup\left\{h_F(\nu)+ \int\phi~d\nu: \nu\in \M_F \text{ and } \int\phi~d\nu>-\infty \right\}.
\end{equation*}
It will be convenient to write $P_{F, \nu}(\phi) = h_F(\nu)+ \int\phi~d\nu$ for $\nu\in \M_F$, when this sum makes sense. 
We call $\nu\in \M_F$ an \emph{equilibrium state} for $\phi$ if $P_{F, \nu}(\phi)= P_F(\phi)$, and write $\nu=\nu_\phi$.
Similarly, the \emph{pressure} of $\bar\phi : Y \to \R$  (with respect to the map $T$) is defined by
\begin{equation*}
P_T(\bar\phi) := \sup\left\{h_T(\mu)+ \int\bar\phi~d\mu: \mu\in \M_T \text{ and } \int\bar\phi~d\mu>-\infty \right\}.
\end{equation*}
Again, it will be convenient to write $P_{T,\mu}(\bar\phi) = h_T(\mu)+ \int\bar\phi~d\mu$ for $\mu\in \M_T$, 
when this sum makes sense. We call $\mu\in \M_T$ an \emph{equilibrium state} for $\bar\phi$ if $P_{T, \mu}(\bar\phi)= P_T(\bar\phi)$ 
and write $\mu=\mu_{\bar\phi}$. 

\begin{rmk}
 Note that, under the assumptions we have considered here, Abramov's and Kac's formulas imply that,
\begin{equation*}
P_F(\phi)=\sup\left\{\frac{h_T(\mu)+ \int\bar\phi~d\mu}{\int\tau~d\mu}: \mu\in \M_T(\tau) \text{ and } \int\bar\phi~d\mu>-\infty \right\}.
\end{equation*}
We will assume that $P_F(\phi)=0$ (otherwise we can shift the potential by a constant). This implies that $P_T(\bar\phi)\le 0$. Moreover, in this paper liftability of all measures implies in fact that  $P_T(\bar\phi)=0$. Under an integrability condition equilibrium states for $\phi$ and $\bar\phi$ are also related. Indeed, if $\mu_{\bar\phi}\in \M_T(\tau)$ then the equilibrium state for $\phi$ is
\begin{equation*}
\nu_\phi= \frac{(\mu_{\bar\phi} \times m)|_{Y^\tau} }{\int\tau~d\mu_{\bar\phi}}.
\end{equation*}
\end{rmk}

We conclude this section  with the following definition which is analogous to \cite[Definition 3.1]{RuhSar22}:
\begin{equation*}
p(t) := P_F(\phi+t\psi) = \sup\left\{\frac{P_{T, \mu}(\overline{\phi+t\psi})}{\int\tau~d\mu}:\mu\in \M_T(\tau) \text{ and } \int\overline{\phi+t\psi}~d\mu>-\infty\right\}.
\end{equation*}

\subsection{Gibbs Markov maps and the main assumptions}

Roughly speaking, Gibbs-Markov maps are infinite branch uniformly expanding maps with bounded distortion and big images. 
We recall the definitions in more detail.
Let $(Y,\mu_Y)$ be a probability space, and let $T:Y\to Y$ be a topologically mixing ergodic measure-preserving transformation, piecewise continuous w.r.t.\ 
a non-trivial countable partition $\{a\}$.
Define $s(y,y')$ to be the least integer $n\ge0$ such that $T^ny$ and $T^ny'$ lie in distinct partition elements.
Assuming  that $s(y,y')=\infty$ if and only if $y=y'$ one obtains that $d_\theta(y,y')=\theta^{s(y,y')}$
for $\theta\in(0,1)$ is a metric.

Let $g=\frac{d\mu_Y}{d\mu_Y\circ T}:Y\to\R$.
We say that $T$ is a \emph{Gibbs-Markov map} if the following hold w.r.t.\ the countable partition $\{a\}$:
\begin{itemize}

\parskip = -2pt
\item $T|_a:a\to T(a)$ is a measurable bijection for each $a$
such that $T(a)$ is the union of elements of the partition $\bmod\mu_Y$;
\item $\inf_a\mu_Y(T(a))>0$ (the big image property);
\item
There are constants $C>0$, $\theta\in(0,1)$ such that
$|\log g(y)-\log g(y')|\le Cd_\theta(y,y')$ for all $y,y'\in a$ and for all $a\in\{a\}$.
\end{itemize}
See, for instance,~\cite[Chapter 4]{Aaronson} and~\cite{AD01}
for background on Gibbs-Markov maps.  Note that under these assumptions, since our system can be viewed as a topologically mixing countable Markov shift with $\mu_Y$ as an equilibrium state for $\log g$, $\mu_Y$ must have positive entropy, see for example \cite[Theorem 5.6]{Sar15}.

Given $v:Y\to\R$, let
\[
D_av=\sup_{y,y'\in a,\,y\neq y'}|v(y)-v(y')|/d_\theta(y,y'),\qquad |v|_\theta=\sup_{a\in\{a\}}D_a v.
\]
The space $\cB_\theta\subset L^\infty(\mu_Y)$ consisting of the functions
$v:Y\to\R$ such that $|v|_\theta<\infty$ with norm
$\|v\|_{\cB_\theta}=|v|_\infty+|v|_\theta<\infty$ is a Banach space. It is known that the transfer operator $R: L^1(\mu_Y)\to L^1(\mu_Y)$, $\int_Y R v w\, d\mu_Y=\int_Y v w\circ T\, d\mu_Y$ has a spectral gap in $\cB_\theta$ (see,~\cite[Chapter 4]{Aaronson}). In particular, this means that $1$ is a simple eigenvalue, isolated in
the spectrum of $R$.

We will also be interested in functions $v:Y\to \R$ such that there is some $C>0$ so that
 \begin{align}\label{eq:inf}
 D_av\le C\inf(1_av), \qquad \forall a\in\{a\}.
 \end{align}

To connect the measures preserved by Gibbs-Markov maps to the previous section we will assume that $\log g = \bar\phi$, so that $\mu_Y=\mu_{\bar\phi}$ is the equilibrium state for $\bar\phi$.  We will use this notation interchangeably.  As in the previous section, under our assumptions, $\mu_{\bar\phi}$ will project to $\nu_\phi$, the equilibrium state for $\phi$.

In this section, we assume that the roof function $\tau:Y\to\R_{+}$
is unbounded and so that
\begin{itemize}
 \item[(GM0)] $\mu_Y(\tau\ge x)\le c x^{-\beta}$, $\beta>1$ for some $c>0$ depending on the map $T$.
 Moreover, we assume that
  $\essinf\tau>0$ ($\essinf$ w.r.t.\ $\mu_Y$)
 and that $\tau$ satisfies \eqref{eq:inf}.
\end{itemize}

The class of potentials we shall work with is as in~\cite{BruTerTod19, BruTerTod21}, which is very natural in the unbounded roof function case.  
Given the suspension $Y^\tau$ and the suspension flow $\F:Y^\tau\to Y^\tau$, consider the potential $\psi: Y^\tau\to \R$.
Our assumptions are in terms of the induced potentials $\overline\psi(x)$.

\begin{itemize}
 \item[(GM1)] Under (GM0), we further assume that $\overline\psi=C_0-\psi_0$, where $0\le\psi_0 (y)
 \le C_1\tau^\gamma (y)$, for $C_0, C_1 > 0$ and
$\gamma\in(\beta-1, \beta)$. Moreover, we assume that
$\essinf\psi_0>0$, $\psi_0$ satisfies~\eqref{eq:inf} and $\int\psi~d\nu_\phi>0$.
\end{itemize}

\begin{rmk}\label{rmk:p>0}
The assumption $\int\psi~d\nu_\phi>0$ in (GM1) ensures that  $p(s)>0$ for $s>0$, which we require throughout. Indeed, $p(s) \ge h_F(\nu_\phi)+\int\phi+s\psi~d\nu_\phi=s \int\psi~d\nu_\phi>0$.  In fact, standard arguments in thermodynamic formalism, see for example \cite[Theorem 4.6.5]{PrzUrb10} and \cite{Sar06} imply that the potentials $\phi+s\psi$ are positive recurrent for $s>0$ and right derivative $D^+p(0)=  \int\psi~d\nu_\phi$.

We can always make $\int\psi~d\nu_\phi$ make positive by replacing $\psi$ by $\psi+c\cdot 1_Y$ for some constant $c$ as in \cite[Remark 8.4]{BruTerTod19}.
The induced potential becomes $\bar\psi+c$ which does not change the tail behaviour, but can make the integral strictly positive.
\end{rmk}

We note that under (GM0), 
\begin{align}\label{tauq}
 \tau\in L^{q_0}(\mu_{\overline{\phi}}),\text{ for any } 1\le q_0<\beta.
\end{align}
and under (GM1),
\begin{align}\label{psi0q}
 \psi_0\in L^{q_1}(\mu_{\overline{\phi}}),\text{ for any } 1\le q_1<\beta/\gamma.
\end{align}
Let $\overline\psi_n=\sum_{j=0}^{n-1}\overline\psi\circ T^j$.
We note that for $q_1>2$ (so, $\beta/\gamma>2$) and 
$\frac{\overline\psi_n-n\mu_{\overline{\phi}}(\bar\psi)}{\sqrt n}$ converges in distribution to a Gaussian random variable with
zero mean and variance $\bar\sigma^2=\lim_{n\to\infty}\frac{1}{n}\int_Y \left(\bar\psi_n-\int_Y \bar\psi_n\,d\mu_{\overline{\phi}}\right)^2\,d\mu_{\overline{\phi}}$.
Because $\overline\psi$ is unbounded, following~\cite[Theorem 3.7]{Gou04},  to ensure that $\bar\sigma^2> 0$ we need to clarify two things. (We recall that $R$ is the transfer operator for $T$
with spectral gap in $\cB_\theta$.) Given  $\overline\psi=C_0-\psi_0$ with $q_1>2$ (so, $\beta/\gamma>2$), let 
 $\Phi=\overline\psi-\int_Y \overline\psi\, d\mu_{\overline{\phi}}$.

We will also require:
 \begin{itemize}
 \item[(a)]$R(\Phi v)\in\cB_\theta$ for all $v\in\cB_\theta$.
 \item[(b)] There exists no function $h\in\cB_\theta$ so that $\Phi=h-h\circ T$.
 \end{itemize}

\begin{rmk}\label{rmk:use1111}
 Item (a) is verified (in the setup of Gibbs Markov maps) inside the proof of Lemma~\ref{lemma:sm} below (see, in particular,~\eqref{eq:use1}). Item (b) simply requires that $\overline\psi$ is not cohomogous to a constant.
 As soon as $\overline\psi$ (equivalently $\psi$) is not cohomogous to a constant, formula~\eqref{eq:sigma} below ensures that $\sigma^2>0$.
\end{rmk}

A classical lifting scheme~\cite{MT04}
ensures that the CLT holds for the original potential
$\psi:Y^\tau\to Y^\tau$ with mean zero and non zero variance $\sigma^2$.
In this case, given that 
$\nu_{\phi}=\frac{\mu_{\overline{\phi}}\times m|_{Y^\tau}}{\int_Y\tau\,d\mu_{\overline{\phi}}}$ 
is the unique equilibrium state for $\phi$ (this is a classical lifting scheme: see, for  instance, 
the review in~\cite[Section 3]{BruTerTod21}). 
Let 
\begin{equation*}
 \sigma^2 = \lim_{T\to\infty}\frac{1}{T}Var(\psi_T),\quad \psi_T=\int_{0}^T\psi\circ \F_t\, dt,\quad Var(\psi_T)=\int_{Y^\tau}\left(\psi_T-\int_{Y^\tau}\psi_T\, d\nu_{\phi}\right)^2\, d\nu_{\phi}.
\end{equation*}
It follows from~\cite{MT04} that, for $\tau^* := \int_Y \tau \, d\mu_{\bar \phi}$,
\begin{equation}\label{eq:sigma}
 \sigma^2=\frac{\bar\sigma^2}{\tau^*},
 \text{ where }\bar\sigma^2=\lim_{n\to\infty}\frac{1}{n}\int_Y \left(\bar\psi_n-\int_Y \bar\psi_n\,d\mu_{\overline{\phi}}\right)^2\,d\mu_{\overline{\phi}}.
 \end{equation}
 We also write $\sigma_{\nu_\phi}(\psi)^2$ when we wish to emphasise the dependence on $\phi$ and $\psi$.

\subsection{Key propositions}
  Note that in general the derivatives $p'(s), p''(s)$ of our pressure functions are not defined at $s=0$: we will be interested in the derivatives from the right,  but to save notation we will write $p'(0), p''(0)$ and so on, rather than $D^+p(0), (D^2)^+p(0)$.  Similarly for the function $q_{\phi, \psi}$ used later.
Combining and adapting arguments from~\cite{BruTerTod19, BruTerTod21, MelTer17},
we obtain the following result.

\begin{prop}\label{prop:gm}
 Assume (GM0) and (GM1).  Assume that $q_0\in [1,\beta)$ and $q_1\in [1,\beta/\gamma)$.
 Then there exists $\delta_0>0$ so that for all $u,s\in [0,\delta_0)$,
 
 \begin{itemize}
  \item[(i)] 
   $\bar p(u,s):=P_{T}(\overline{\phi+s\psi-u})$ is $C^{q_0}$ in $u$
   and 
   $C^{q_1}$ in $s$.
  \item[(ii)] Define $p(s):=P_{\F}(\phi+s\psi)$. Then
  \begin{align*}
  p(s)=\frac{\bar p(0,s)}{\tau^*}(1+o(1)), \text{ as } s\to 0.
  \end{align*}
  Also $p(s)$ is $C^{q_1}$ and
   $p'(0)=\frac{\overline{\psi}^*}{\tau^*}:=\frac{\int_Y\overline{\psi}\, d\mu_{\bar\phi}}{\int_Y\tau\, d\mu_{\bar\phi}}$.
   
  \item[(iii)] Suppose $q_1>2$ and $p''(0)=\sigma^2$, where $\sigma^2=\sigma_{\nu_\phi}(\psi)^2$ is as in~\eqref{eq:sigma}.
  
  \end{itemize}
\end{prop}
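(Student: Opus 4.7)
The plan is to work entirely on the base side via the weighted transfer operator $\hat R_{u,s}$ associated to the induced perturbed potential $\overline{\phi+s\psi-u} = \bar\phi + s\bar\psi - u\tau$, and then transfer regularity and Taylor expansions back to the flow through the Savchenko-type implicit equation $\bar p(p(s),s)=0$. Under (GM0) the unperturbed operator $\hat R_{0,0}$ has a spectral gap on $\cB_\theta$ with simple leading eigenvalue $e^{\bar p(0,0)}=1$, and standard perturbation theory identifies $\bar p(u,s)$ with the logarithm of the perturbed leading eigenvalue. The $k$-th derivatives in $u$ and $s$ bring down multiplication by $\tau^k$ and $\bar\psi^k$ respectively (the latter controlled up to constants by $\tau^{k\gamma}$ thanks to $0\le\psi_0\le C_1\tau^\gamma$), so by~\eqref{tauq} and~\eqref{psi0q} these multipliers act on the relevant $L^q$-spaces as long as $k\le q_0<\beta$ and $k\le q_1<\beta/\gamma$, which gives (i). The precise bookkeeping (Banach-space choice, boundedness of multiplication operators, continuity of the parameter family) is exactly the type of argument worked out in \cite{BruTerTod19,BruTerTod21,MelTer17}, and I would adapt it here with only minor modifications.

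For (ii), $p(s)$ is characterised by $\bar p(p(s),s)=0$ via the variational principle and Abramov's formula. At $(u,s)=(0,0)$ one has $\bar p(0,0)=P_T(\bar\phi)=0$ (since $P_F(\phi)=0$), and the standard first-variation formula for a simple eigenvalue gives $\partial_u\bar p(0,0)=-\tau^*\ne 0$ and $\partial_s\bar p(0,0)=\bar\psi^*$. The implicit function theorem, combined with the regularity from (i), locally defines $p(s)$ with the claimed smoothness, and the chain rule delivers $p'(0)=\bar\psi^*/\tau^*$. For the asymptotic formula, I would expand $\bar p(p(s),s)=0$ in the $u$-variable to first order,
\[
0 = \bar p(0,s) + \partial_u\bar p(0,s)\, p(s) + o(p(s)),
\]
and use continuity $\partial_u\bar p(0,s)\to-\tau^*$ as $s\to 0^+$ to conclude $p(s)=\bar p(0,s)/\tau^*\cdot(1+o(1))$.

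For (iii), the condition $q_1>2$ combined with the integrability needed to make $\sigma^2$ finite places us in the regime where $\bar p$ is jointly $C^2$ near $(0,0)$. Differentiating $\bar p(p(s),s)=0$ twice at $s=0$ yields
\[
\partial^2_{uu}\bar p(0,0)\,(p'(0))^2 - \tau^*\,p''(0) + 2\,\partial^2_{us}\bar p(0,0)\,p'(0) + \partial^2_{ss}\bar p(0,0) = 0.
\]
Each second partial identifies with a standard asymptotic (co)variance with respect to $\mu_{\bar\phi}$: $\partial_{uu}^2\bar p(0,0)$ is the asymptotic variance of $\tau$, $\partial_{us}^2\bar p(0,0)$ the asymptotic covariance of $\bar\psi$ with $-\tau$, and $\partial_{ss}^2\bar p(0,0)$ the asymptotic variance of $\bar\psi$. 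Setting $c=p'(0)=\bar\psi^*/\tau^*=\int\psi\,d\nu_\phi$, those three terms recombine precisely into $\tau^*\,p''(0)=\bar\sigma_{\psi-c}^2$, i.e.\ the base-level asymptotic variance of the induced centred potential $\bar\psi-c\tau=\overline{\psi-c}$. Dividing by $\tau^*$ and invoking the MT04 lifting~\eqref{eq:sigma} applied to $\psi-c$ (whose flow variance agrees with that of $\psi$) gives $p''(0)=\sigma^2$.

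The main obstacle is part (i): extracting the \emph{sharp} fractional regularity $C^{q_0}$ in $u$ and $C^{q_1}$ in $s$ (rather than $C^{\lfloor q_0\rfloor}$ and $C^{\lfloor q_1\rfloor}$) requires an interpolation argument and the precise integrability bounds on iterated multiplication by $\tau$ (and hence by $\bar\psi$) furnished by the Gibbs–Markov apparatus of \cite{BruTerTod19,BruTerTod21,MelTer17}. Once those estimates are in hand, (ii) and (iii) reduce to the chain-rule/implicit-function and MT04-lifting computations sketched above.
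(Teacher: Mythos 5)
Your treatment of (i) and (ii) matches the paper's route: identify $\bar p(u,s)=\log\lambda(u,s)$ via RPF theory, control the derivatives of the perturbed operator $R(u,s)$ (which bring down $\tau^k$ and $\bar\psi^k$) using the integrability bounds \eqref{tauq}, \eqref{psi0q} --- exactly the content of Lemma~\ref{lemma:sm} --- and differentiate the implicit equation $\bar p(p(s),s)=0$ via the IFT to obtain (ii). Same key lemma, same approach.

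For (iii), however, there is a genuine gap. Your Taylor-type identity
\[
\partial^2_{uu}\bar p(0,0)\,(p'(0))^2-\tau^*p''(0)+2\,\partial^2_{us}\bar p(0,0)\,p'(0)+\partial^2_{ss}\bar p(0,0)=0
\]
requires $\partial^2_{uu}\bar p(0,0)$ and $\partial^2_{us}\bar p(0,0)$ to exist. But under (GM1) $\gamma>\beta-1$, so the hypothesis $q_1>2$ (i.e.\ $\beta/\gamma>2$) forces $\beta<2$, hence $\tau\notin L^2(\mu_{\bar\phi})$; the asymptotic variance of $\tau$ is infinite and $\partial^2_{uu}\bar p(0,0)$ is not defined. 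Likewise $\tau\psi_0$ need not be in $L^1$, so $\partial^2_{us}\bar p(0,0)$ is not controlled either. The sentence "$\bar p$ is jointly $C^2$ near $(0,0)$" is therefore false --- the paper only proves $C^{q_0}$-regularity in $u$ with $q_0<\beta<2$ --- and the identity you write is an $\infty-\infty$ expression. A related symptom: your final answer $p''(0)=\bar\sigma^2_{\bar\psi-c\tau}/\tau^*$ would then be infinite, whereas the paper's $\sigma^2=\bar\sigma^2/\tau^*$ in \eqref{eq:sigma} uses only $\bar\psi\in L^2$, which does hold when $\beta/\gamma>2$. The paper avoids the bad partials by working with the integral $L(u_0,s)=\int_0^{u_0}D(u,s)\,du$ --- the integrand $D=\partial_u\bar p+\tau^*$ is bounded even though $\partial_u D$ blows up at $u=0$ --- and then tracking the explicit polynomial rates in $u$ from Lemma~\ref{lemma:sm}(ii), \eqref{eq:utau} and \eqref{eq:reftoit} to show the relevant contributions at $u_0=u_0(s)\sim cs$ are $o(1)$. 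To make your (iii) rigorous you would have to compute $p''(s)$ only for $s>0$ (where all second partials are finite) and carefully pass to the limit, verifying that the apparent singularities of exponents $s^{\beta-2}$, $s^{\beta-\gamma-1}$, $s^{\beta-2\gamma}$ coming from the cross-derivatives actually disappear; this is exactly the work the paper's integral formulation is designed to organize.
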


\begin{rmk}\label{rmk:clspot}
 We note that the restrictions posed on the class of potentials considered in (GM1) is not just a matter of simplification. 
Hypothesis (GM1) or variants of it are needed to ensure that the transfer operators perturbed with real valued potentials 
 defined in Section~\ref{sub:firstprop} are well defined in $\cB_\theta$.
 This is a necessary ingredient for the relation between eigenvalues and pressure function: 
 see Section~\ref{sub:firstprop} below.
\end{rmk}

As we will show in Section~\ref{sub:firstprop}, 
 item (ii) of Proposition~\ref{prop:gm} follows from item (i) together with
the Implicit Function Theorem (IFT).
For the case of LSV maps (as in~\cite{LiveraniSaussolVaienti99}; they are a type of AFN maps, see Section~\ref{sec:examples}) with infinite measure, an implicit equation is exploited in the proof of~\cite[Proof of Theorem 4.1]{BruTerTod19}. For the proof of item (i), we adapt the arguments in~\cite{BruTerTod19} to the case of finite measure. For the proof of item (ii), we combine the `implicit' equation
in~\cite[Proof of Theorem 4.1]{BruTerTod19} with the IFT,
which is natural since here we are interested in the smoothness of $P_{T}(\overline{\phi+s\psi})$.

While Proposition~\ref{prop:gm} will allow us to obtain the expected EKP inequality for $q_1>3$
(so $\beta/\gamma > 3$, see \eqref{psi0q}), in the case $\beta/\gamma<3$, we need a refined version
under stronger assumptions. The next proposition tells us how the second derivative of $p(s)$ blows up as $s\to 0$ when  $\beta/\gamma\in (1,2]$
and, how the third derivative blows up as $s\to 0$ when  $\beta/\gamma\in (2,3)$, respectively. (It also gives the speed of convergence of the first and second
derivatives to $p'(0)$ and $p''(0)$, respectively.)

\begin{prop}\label{prop:gmsec} Assume (GM0) with $\mu_Y(\tau\ge x)=c x^{-\beta}(1+o(1))$ for $\beta\in (1,2)$.
Suppose that (GM1) holds with $\psi_0=C_1 \tau^\gamma$ with $\gamma\in (\beta-1,1)$. 
There exist $C_2, C_3>0$ depending only on $c,\beta,\gamma$ and $\tau^*$ so that the following hold as $s\to 0$.
\begin{itemize}
\item[(i)] If $\beta/\gamma\in (1,2]$, then $p''(s)=C_2 s^{\beta-\gamma-1}(1+o(1))$.
\item[(ii)] If $\beta/\gamma\in (2, 3)$, then $p'''(s)=-C_3 s^{\beta-2\gamma-1}(1+o(1))$.
\end{itemize}
\end{prop}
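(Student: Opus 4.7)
My plan is to sharpen the perturbation analysis underlying Proposition~\ref{prop:gm} and then extract the claimed rates from the implicit equation $\bar p(p(s),s)=0$.

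Building on the transfer-operator techniques of~\cite{BruTerTod19,BruTerTod21,MelTer17}, the first task is a joint asymptotic expansion of the leading eigenvalue $\tilde\lambda(u,s)$ of the twisted operator $R_{u,s}v:=R(e^{s\bar\psi-u\tau}v)$ near $(u,s)=(0,0)$, under the exact tail $\mu_Y(\tau\ge x)=cx^{-\beta}(1+o(1))$ and the pointwise identity $\psi_0=C_1\tau^\gamma$. Since the perturbation is a deterministic function of the single heavy-tailed variable $\tau$, the scaling substitution $t=u\tau$ in the tail contribution produces a factor $c\beta u^\beta J(\xi)$, where $\xi=sC_1 u^{-\gamma}$ and $J(\xi)=\int_0^\infty(e^{-t-\xi t^\gamma}-1+t+\xi t^\gamma)t^{-\beta-1}\,dt$. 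Along the critical path $u=p(s)\sim p_1 s$ with $p_1:=\bar\psi^*/\tau^*$, one has $\xi\sim s^{1-\gamma}\to 0$, and Taylor-expanding $J$ at zero produces joint monomials $s^k u^{\beta-k\gamma}$ in the non-analytic part of $\bar p(u,s)$. The admissible order is controlled by the integrability of $J^{(k)}(0)$: $J''(0)=\Gamma(2\gamma-\beta)$ is finite precisely when $\beta/\gamma<2$ (case (i)) and $J'''(0)=-\Gamma(3\gamma-\beta)$ precisely when $\beta/\gamma<3$ (case (ii)), which is exactly the dichotomy of the proposition.

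The second step is to substitute into the implicit equation. Writing $p(s)=p_1 s+r(s)$ and evaluating the expansion of $\bar p$ at $u=p(s)$, each monomial $s^k u^{\beta-k\gamma}$ becomes $s^{\beta+k(1-\gamma)}$. In case (i) the leading non-analytic contribution to $r(s)$ comes from the $k=1$ term, giving $r(s)\sim s^{\beta-\gamma+1}$ and, after two differentiations, $p''(s)=C_2 s^{\beta-\gamma-1}(1+o(1))$. In case (ii) the $k=2$ term is now rigorously available, yielding a contribution of order $s^{\beta-2\gamma+2}$ to $p(s)$ and, after three differentiations, $p'''(s)=-C_3 s^{\beta-2\gamma-1}(1+o(1))$, with the minus sign inherited from the alternating Taylor coefficients of $J$.

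The main obstacle I anticipate is verifying that the $k=1$ (resp.\ $k=2$) term really dominates in $r(s)$ along the path $u=p(s)$: a naive computation of the tail integral would also suggest an order-$u^\beta$ contribution at $k=0$, encoded by $J(0)=\Gamma(-\beta)$, which if literally present in $\bar p(u,s)$ would produce $p''(s)\sim s^{\beta-2}$ and overshoot the claim. The refined operator analysis of~\cite{BruTerTod19,BruTerTod21}, including the eigenfunction correction to the leading eigenvalue and the implicit renewal equation for the induced pressure used in the proof of Proposition~\ref{prop:gm}(ii), is what should ensure that the effective leading contribution to $r(s)$ is the stated mixed term rather than the naive $k=0$ one. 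A final step is to track the explicit Gamma-function and $p_1$-dependent prefactors through the IFT calculation to confirm strict positivity of $C_2$ and $C_3$ and the sign in (ii).
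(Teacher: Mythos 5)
Your proposal captures the strategy of the proof: establish a joint $(u,s)$-expansion of $\bar p(u,s)$ encoding the heavy-tail contribution and feed it into the implicit relation $\bar p(p(s),s)=0$. The scaling substitution $t=u\tau$ with $\xi=sC_1u^{-\gamma}$ and the reduction to the one-variable function $J(\xi)$ is a clean organizational device under the exact identity $\psi_0=C_1\tau^\gamma$, and the Gamma-function integrability thresholds $2\gamma-\beta>0$ and $3\gamma-\beta>0$ correctly reproduce the dichotomy $\beta/\gamma<2$ versus $\beta/\gamma<3$. The paper obtains the same information in different clothing, namely through the operator-theoretic Lemmas~\ref{lemma:ref}, \ref{lemm:refev} and \ref{lemma:doubderder}, which control $\partial_s\partial_u\bar p$ and $\partial_s^2\partial_u\bar p$ directly and are then fed into the twice-differentiated implicit equation; your $J$-expansion can be read as a closed-form shorthand for those estimates.

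The gap is exactly the one you flag at the end and then outsource. Your Taylor expansion of $J$ produces a $u^\beta$ contribution at $k=0$, which along $u=p(s)\sim p_1 s$ yields $r(s)\sim s^\beta$ and, after two differentiations, $p''(s)\sim s^{\beta-2}$; since $\gamma<1$ this would dominate the claimed $s^{\beta-\gamma-1}$. In the implicit-function-theorem framework used in the paper this term is precisely $\partial^2_{u_0}M(u_0,s)\cdot u_0'(s)=\partial^2_u\bar p(u_0,s)\cdot u_0'(s)$, one of the chain-rule terms that appears when one differentiates $u_0'(s)=\partial_s M/(\tau^*-\partial_{u_0}M)$ once more in $s$, and $\partial^2_u\bar p$ is the second $u$-derivative of the induced pressure in the roof-function direction, which under (GM0) with $\beta\in(1,2)$ blows up like $u_0^{\beta-2}$. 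Asserting that ``the refined operator analysis'' should make this term disappear is not an argument: you must write out the second differentiation of the implicit relation in full, isolate the three pieces $\partial^2_u\bar p\cdot(p')^2$, $2\partial_u\partial_s\bar p\cdot p'$ and $\partial^2_s\bar p$, and then show that the first does not contribute at order $s^{\beta-2}$ (via a cancellation, via smallness of its coefficient, or otherwise). The estimates you would actually take from the paper, the refined control of $d(u,0)$ and of $\partial_s\partial_u\bar p$ in Lemma~\ref{lemm:refev}, do not on their own dispose of this term. Until that single step is carried out, the proposal is controlling terms that turn out to be subdominant, and it does not yet constitute a proof of the proposition.
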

\begin{rmk}
\begin{itemize}
\item[(i)] It is possible to change the assumption on $\beta$ and $\gamma$, but we need a definite assumption to state a final result.
When $\gamma>1$, the asymptotics are different. We do not consider other cases here as this would make the analysis even more tedious,
though most of the calculations can easily be adapted to fit this case.

\item[(ii)]If $\gamma=1$ and $\beta>1$, then we have the following scenario: 
(a) $p''(s)=C_2 s^{\beta-2}(1+o(1))$ if $\beta\in (1,2)$, 
(b)  $p''(s)=C_3 \log(1/s)(1+o(1))$ if $\beta=2$ and
(c) $p'''(s)=C_4 s^{\beta-3}(1+o(1))$
if $\beta\in (2,3)$. We do not display the calculations in this case mainly because it does not lead to any interesting phase transition
in the corresponding version of Theorem~\ref{thm:secmain}.
\end{itemize}
 \end{rmk}

\subsection{Main Theorems}

Using Propositions~\ref{prop:gm} and~\ref{prop:gmsec}, we obtain an interesting generalization
of~\cite{RuhSar22}
for the restricted pressure $q_{\phi, \psi}$. Though our class of potentials is, naturally, much more restricted than in (GM1),
Theorems~\ref{thm:firstmain} and~\ref{thm:secmain}  below show the existence of a new \emph{phase transition}
in terms of whether $\psi_0$ is in $L^{2}(\mu_{\overline{\phi}})$ or not. In particular, 
if $\beta/\gamma>2$ then  $\psi_0$ is $L^2(\mu_{\overline{\phi}})$ (recall~\eqref{psi0q}). The new phase transition is  captured in Theorem~\ref{thm:secmain}.

The result below gives the EKP inequality for $q_1>3$ (with $q_1$ as in~\eqref{psi0q}) when the CLT holds.
Before the statement we note that we are interested in cases $\int\psi~d\nu\neq \int\psi~d\nu_\phi$, so implicitly we are always assuming that $\psi$ is not cohomologous to a constant. 
We also recall from Remark~\ref{rmk:use1111} that this is all what we need to ensure that $\sigma^2>0$.

\begin{thm}\label{thm:firstmain} Assume (GM0) and (GM1).
Assume that $q_1 >3$ (so $\beta/\gamma > 3$) and let $\sigma=\sigma_{\nu_\phi}(\psi)$ be as
defined in~\eqref{eq:sigma}. There exists $\eps>0$ 
  so that for any $F$-invariant probability measure $\nu$ with 
  $\int\psi~d\nu\in \left(\int\psi~d\nu_\phi, \int\psi~d\nu_\phi+\eps\right)$, we have
 \[
\int\psi\,d\nu-\int\psi\,d\nu_{\phi} 
     \le C_{\phi, \psi}  {\sqrt 2}\sigma\sqrt{P_{\nu_\phi}(\phi)-P_\nu(\phi)},
     \] 
where $C_{\phi, \psi} \geq 1$ tends to $1$ as $\int \psi \, d\nu \to \int \psi \, d\nu_{\phi}$.     
     
 For the equilibrium states $\nu_s$ of $\phi + s\psi$, we have
    \begin{equation}\label{eq:ho}
\left| \frac{\int\psi\,d\nu_s-\int\psi\,d\nu_{\phi} }{ \sqrt{P_{\nu_\phi}(\phi)-P_{\nu_s}(\phi)}} - \sqrt 2 \sigma \right| = O\left(\sqrt{P_{\nu_\phi}(\phi)-P_{\nu_s}(\phi)}  \right) \quad \text{ as } s \to 0.
  \end{equation}
\end{thm}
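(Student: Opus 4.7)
The plan is to follow the Legendre-duality / restricted-pressure scheme of~\cite{RuhSar22}, leveraging the pressure regularity supplied by Proposition~\ref{prop:gm}. With $a_0 := \int\psi\,d\nu_\phi = p'(0)$ (a right derivative, as noted in the preamble), I introduce the restricted pressure
\[
q_{\phi,\psi}(a) := \sup\Bigl\{P_\nu(\phi) : \nu \in \M_F,\ \int\psi\,d\nu = a\Bigr\}.
\]
Directly from the definitions $p(s) = \sup_a\bigl(q_{\phi,\psi}(a) + sa\bigr)$, and convex duality (using that $p \in C^{q_1}$ with $q_1 > 3$) yields $q_{\phi,\psi}(a) = \inf_{s \ge 0}\bigl(p(s) - sa\bigr)$ in a one-sided neighbourhood of $a_0$; in particular $q_{\phi,\psi}(a_0) = p(0) = P_{\nu_\phi}(\phi)$, and $P_\nu(\phi) \le q_{\phi,\psi}\bigl(\int\psi\,d\nu\bigr)$ for every admissible $\nu \in \M_F$.

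The next step exploits Proposition~\ref{prop:gm}(ii)--(iii): $p \in C^3$ near $0$ with $p(0) = P_{\nu_\phi}(\phi)$, $p'(0) = a_0$, and $p''(0) = \sigma^2 > 0$. Taylor expanding,
\[
p(s) = P_{\nu_\phi}(\phi) + a_0 s + \tfrac{1}{2}\sigma^2 s^2 + O(s^3), \qquad p'(s) = a_0 + \sigma^2 s + O(s^2),
\]
I apply the implicit function theorem to $p'(s) = a$ to obtain a smooth branch $s = s(a) = (a - a_0)/\sigma^2 + O((a-a_0)^2)$ for $a$ near $a_0$, and substitute into $q_{\phi,\psi}(a) = p(s(a)) - s(a)\,a$ to conclude
\[
q_{\phi,\psi}(a) = P_{\nu_\phi}(\phi) - \frac{(a-a_0)^2}{2\sigma^2} + O\bigl(|a-a_0|^3\bigr).
\]

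Given this expansion, for any $F$-invariant $\nu$ with $a := \int\psi\,d\nu \in (a_0, a_0 + \eps)$ (with $\eps$ chosen so the cubic error is dominated by the quadratic one),
\[
P_{\nu_\phi}(\phi) - P_\nu(\phi) \ge \frac{(a-a_0)^2}{2\sigma^2}\bigl(1 + O(|a-a_0|)\bigr),
\]
which rearranges to the desired inequality with $C_{\phi,\psi} = (1 + O(|a-a_0|))^{-1/2} \to 1$ as $a \to a_0$. For the refined asymptotic~\eqref{eq:ho}, I use that the equilibrium states $\nu_s$ of $\phi + s\psi$ satisfy $\int\psi\,d\nu_s = p'(s)$ and $P_{\nu_s}(\phi) = p(s) - s p'(s)$ by differentiability of the pressure. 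Plugging in the $C^3$ expansions,
\[
\int\psi\,d\nu_s - a_0 = \sigma^2 s + O(s^2), \qquad P_{\nu_\phi}(\phi) - P_{\nu_s}(\phi) = \tfrac{1}{2}\sigma^2 s^2 + O(s^3),
\]
and their quotient equals $\sqrt{2}\,\sigma + O(s) = \sqrt{2}\,\sigma + O\bigl(\sqrt{P_{\nu_\phi}(\phi) - P_{\nu_s}(\phi)}\bigr)$, which is~\eqref{eq:ho}.

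The main technical hurdle will be to rigorously justify the duality identity $q_{\phi,\psi}(a) = \inf_{s \ge 0}(p(s) - sa)$ in our non-compact suspension setting with unbounded $\psi$: one has to check that the supremum defining $q_{\phi,\psi}(a)$ is finite and actually coincides with the Legendre transform of $p$ throughout a one-sided neighbourhood of $a_0$. Here the Abramov--Kac formulae, the liftability of $F$-invariant measures to $\M_T(\tau)$, and the positivity $p(s) > 0$ for $s > 0$ from Remark~\ref{rmk:p>0} take the place played by the spectral-gap argument in the bounded setting of~\cite{RuhSar22}. The subsequent Taylor expansion and implicit function theorem steps are routine once Proposition~\ref{prop:gm} is in hand.
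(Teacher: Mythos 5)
Your proposal follows the same broad strategy as the paper's proof: introduce the restricted pressure $q_{\phi,\psi}$, Taylor-expand $p(s)$ using Proposition~\ref{prop:gm}, invert $p'(s)=a$, and rearrange. The Taylor-side computations in your write-up are essentially identical to the paper's and give the correct constant $\sqrt 2 \sigma$ and error order.

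The genuine gap is in the step $q_{\phi,\psi}(a) = \inf_{s\geq 0}(p(s)-sa) = p(s(a))-s(a)\,a$, which you flag as ``the main technical hurdle'' but do not close, and your suggested recipe for closing it is off. You say Abramov--Kac, liftability, and $p(s)>0$ ``take the place played by the spectral-gap argument'' of~\cite{RuhSar22} --- but in fact the paper's proof of this very theorem still invokes the spectral gap of the perturbed transfer operator: by Lemma~\ref{lemma:sm} the operator $R(u,s)$ inherits a spectral gap from $R(0,0)$ by continuity, and this is what gives the unique equilibrium state $\mu_s$ for $\overline{\phi+s\psi-p(s)}$. From there one must additionally check that $\mu_s\in\M_T(\tau)$ (the paper shows $\int\tau\,d\mu_s\ll\int\tau\,d\mu_{\bar\phi}<\infty$ via the Gibbs property and $s,p(s)>0$, $\bar\psi<\infty$) so that $\nu_s$ is a bona fide flow measure, and then run a short variational contradiction to establish $P_\nu(\phi)\le P_{\nu_s}(\phi)=q(a)$ for every $\nu$ with $\int\psi\,d\nu=a$. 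Without this concrete construction of $\nu_s$ and the inequality, the appeal to abstract Legendre duality is unjustified here --- one does not know a priori that the supremum in $q(a)$ is attained or that strong duality holds in this non-compact, unbounded setting. A second, more minor point: the uniqueness of the solution $s$ to $p'(s)=a$ (needed for the IFT step) requires a positivity-of-$p''$ argument near $0$; the paper does this explicitly by bounding $|p'''|$ and showing $p''$ cannot vanish in a small interval, whereas you only implicitly rely on $p''(0)=\sigma^2>0$ plus continuity, which is fine but should be said.
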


The first result below addresses the case $q_1<3$. We consider two main cases for the ratio $\beta/\gamma$.
It is precisely this result that captures the new type of phase transition. While item (b) of the result below
shows a (familiar) EKP inequality in the case $\beta/\gamma\in (2,3)$ (when the CLT with standard scaling is present),
item (a) gives a new type of EKP inequality with the exponent changing from $1/2$ to one depending on the ratio $\beta/\gamma$.
The transition is natural (see Remark~\ref{rmk:st}).

\begin{thm}\label{thm:secmain}Assume (GM0) with $\mu_Y(\tau\ge x)=c x^{-\beta}(1+o(1))$, with $\beta\in (1,2)$.
Suppose that (GM1) holds with $\psi_0=C_1 \tau^\gamma$ with $\gamma\in (\beta-1,1)$. 

There exist $\eps>0$ and constants $c_2, c_3>0$ so that the following hold for any $F$-invariant probability measure $\nu$ with $\int\psi~d\nu\in \left(\int\psi~d\nu_\phi, \int\psi~d\nu_\phi+\eps\right)$.
\begin{itemize}
 \item[(a)]  If $\beta/\gamma\in (1,2]$,
then
\[
 \int\psi\,d\nu-\int\psi\,d\nu_{\phi}\le c_2 (P_{\nu_\phi}(\phi)-P_\nu(\phi))^{\frac{\beta-\gamma}{\beta-\gamma+1}}
 \]
 For the equilibrium states $\nu_s$ of $\phi + s\psi$, there is a constant\footnote{for $C_2$ as in Proposition~\ref{prop:gmsec}(i).}
 $C_2>0$ such that
 \begin{equation}
\left| \frac{ \int\psi\,d\nu_s-\int\psi\,d\nu_{\phi}}{(P_{\nu_\phi}(\phi)-P_{\nu_s}(\phi))^{\frac{\beta-\gamma}{\beta-\gamma+1}} }
 - \frac{\beta}{\gamma} C_2^{-\frac{1}{\beta-\gamma}} \right| = o(1) \quad \text{ as } s \to 0.
 \label{eq:heavyHold}
 \end{equation}

\item[(b)] If $\beta/\gamma\in (2,3)$, then
\[
 \int\psi\,d\nu-\int\psi\,d\nu_{\phi}\le  c_3 {\sqrt 2}\sigma\sqrt{P_{\nu_\phi}(\phi)-P_\nu(\phi)}.
 \]
 For the equilibrium states $\nu_s$ of $\phi + s\psi$, we have
 \begin{equation}
\left| \frac{ \int\psi\,d\nu_s- \int\psi\,d\nu_{\phi}}{\sqrt{P_{\nu_\phi}(\phi)-P_{\nu_s}(\phi)}} - \sqrt2\sigma \right|
 = O\left((P_{\nu_\phi}(\phi)-P_{\nu_s}(\phi) )^{\frac{\beta-2\gamma}{2}}\right) \quad \text{ as } s \to 0.
 \label{eq:heavyquad}
 \end{equation}
\end{itemize}
\end{thm}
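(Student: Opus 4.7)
The plan is to run the same strategy as Theorem~\ref{thm:firstmain} (following \cite{RuhSar22}), passing through the restricted pressure $q_{\phi,\psi}(x):=\sup\{P_\mu(\phi):\mu\in\M_F,\int\psi\,d\mu=x\}$ and its Legendre relation $p(s)=\sup_x(q_{\phi,\psi}(x)+sx)$, but feeding in the one-sided derivative asymptotics from Proposition~\ref{prop:gmsec} in place of the CLT-based Taylor expansion of $p$ at $0$ that is available only when $q_1>3$.

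I would first fix the equilibrium states $\nu_s$ of $\phi+s\psi$, which satisfy $\int\psi\,d\nu_s=p'(s)$ and $P_{\nu_s}(\phi)=p(s)-sp'(s)$, hence
\[
P_{\nu_\phi}(\phi)-P_{\nu_s}(\phi)=s\bigl(p'(s)-p'(0)\bigr)-\bigl(p(s)-p(0)-sp'(0)\bigr).
\]
In case (a), integrating $p''(s)=C_2 s^{\beta-\gamma-1}(1+o(1))$ twice yields
\[
p'(s)-p'(0)=\tfrac{C_2}{\beta-\gamma}s^{\beta-\gamma}(1+o(1)),\qquad p(s)-p(0)-sp'(0)=\tfrac{C_2}{(\beta-\gamma)(\beta-\gamma+1)}s^{\beta-\gamma+1}(1+o(1)),
\]
which combine to give $P_{\nu_\phi}(\phi)-P_{\nu_s}(\phi)=\tfrac{C_2}{\beta-\gamma+1}s^{\beta-\gamma+1}(1+o(1))$; eliminating $s$ between these two asymptotics produces the ratio identity~\eqref{eq:heavyHold}. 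In case (b), integrating $p'''(s)=-C_3 s^{\beta-2\gamma-1}(1+o(1))$ three times together with the values $p'(0)$ and $p''(0)=\sigma^2$ yields $p'(s)-p'(0)=\sigma^2 s(1+O(s^{\beta-2\gamma}))$ and $p(s)-p(0)-sp'(0)=\tfrac12\sigma^2 s^2(1+O(s^{\beta-2\gamma}))$, so $P_{\nu_\phi}(\phi)-P_{\nu_s}(\phi)=\tfrac12\sigma^2 s^2(1+O(s^{\beta-2\gamma}))$, which gives~\eqref{eq:heavyquad}.

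Second, I would pass to an arbitrary flow-invariant $\nu$ with $\int\psi\,d\nu\in(\int\psi\,d\nu_\phi,\int\psi\,d\nu_\phi+\eps)$. From $p(s)=\sup_x(q_{\phi,\psi}(x)+sx)$ one has $q_{\phi,\psi}(\int\psi\,d\nu)\le p(s)-s\int\psi\,d\nu$ for every admissible $s\ge 0$. Choosing $s=s(\nu)$ so that $p'(s)=\int\psi\,d\nu$---feasible on a right-neighbourhood of $0$ because Proposition~\ref{prop:gmsec} shows $p'$ is strictly increasing near $0^+$---and substituting the asymptotics above converts this inequality into the EKP bound with exponent $\frac{\beta-\gamma}{\beta-\gamma+1}$ in case (a) and $\tfrac12$ in case (b). Since $P_\nu(\phi)\le q_{\phi,\psi}(\int\psi\,d\nu)$ by definition, the right-hand side is legitimately replaced by $P_{\nu_\phi}(\phi)-P_\nu(\phi)$, giving the two stated inequalities.

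The hard part will be handling the intrinsic one-sidedness of the setup: for this class of unbounded $\psi$ the potentials $\phi+s\psi$ may fail to have finite pressure for $s<0$, so both the Legendre inversion $x\mapsto s(x)$ and the sup defining $q_{\phi,\psi}$ must be set up on the right of $\int\psi\,d\nu_\phi$ only, and one must verify that a minimiser $s(\nu)$ exists in the regime where Proposition~\ref{prop:gmsec} is valid. A secondary delicacy in case (a) is that $\sigma^2=\infty$, so the leading behaviour is not captured by a finite value of $p''$ at $0$ but rather by its blow-up rate $C_2 s^{\beta-\gamma-1}$; the explicit constant in~\eqref{eq:heavyHold} must be read off by tracking $C_2$ through each integration of $p''$, and the $o(1)$ remainders must be propagated through the same integrations.
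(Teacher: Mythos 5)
Your proposal follows essentially the same route as the paper: use the restricted pressure $q_{\phi,\psi}$, write $P_{\nu_\phi}(\phi)-P_{\nu_s}(\phi)=sp'(s)-p(s)$ for the equilibrium states $\nu_s$ of $\phi+s\psi$, integrate the one-sided derivative asymptotics of Proposition~\ref{prop:gmsec}, and then pass to a general $\nu$ via $P_\nu(\phi)\le q_{\phi,\psi}(\int\psi\,d\nu)$. This is precisely what the paper does. The one small point worth tightening: the claim that ``Proposition~\ref{prop:gmsec} shows $p'$ is strictly increasing near $0^+$'' is directly correct in case~(a) (where $p''(s)=C_2 s^{\beta-\gamma-1}(1+o(1))>0$), but in case~(b) Proposition~\ref{prop:gmsec} only gives the blow-up of $p'''$; you also need $p''(0)=\sigma^2>0$ from Proposition~\ref{prop:gm}(iii) before concluding that $p''>0$ on a right-neighbourhood of $0$ (this is exactly what the paper's Lemma~\ref{trivial} records). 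Your integration-based bookkeeping in case~(a) is if anything more careful than the paper's (you carry the $1/(\beta-\gamma)$ and $1/(\beta-\gamma+1)$ factors explicitly rather than absorbing them), which is the sound way to extract the constant in~\eqref{eq:heavyHold}.
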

\begin{rmk}\label{rmk:nonzpt}
 We note that $P_{\nu_\phi}(\phi)-P_{\nu}(\phi)$ in the theorems above cannot be zero because $\nu_\phi\neq \nu$ and $\nu_\phi$ is the unique equilibrium state for $\phi$.  Similarly, $P_{\nu_\phi}(\phi)-P_{\nu_s}(\phi)$ cannot be zero because $\nu_\phi\neq \nu_s$ for $s>0$.
\end{rmk}

 \begin{rmk}\label{rmk:st}
 \begin{itemize}
\item[(a)] Recall that $\overline\psi_n=\sum_{j=0}^{n-1}\overline\psi\circ T^j$
and that 
$\psi_T=\int_0^T\psi\circ \F_t\, dt$. It is known (see, for instance,~\cite[Theorem 2]{Sar06}) that in the setup of Theorem~\ref{thm:secmain} (a) with $\beta/\gamma<2$, $\frac{\overline\psi_n-n\int_{Y}\overline\psi\, d\mu_Y}{n^{\gamma/\beta}}\to^d M_{\beta/\gamma}$, where $M_{\beta/\gamma}$ is a random variable in the domain of a stable law with index
  $\beta/\gamma<2$. This lifts to a similar limit law for the flow (see, for instance,~\cite[Lemma 6.3]{BruTerTod21}): $\frac{\psi_T-T\int_{Y^\tau}\psi\, d\nu}{T^{\gamma/\beta}}\to^d M_{\beta/\gamma}$.
  
  In the setup of Theorem~\ref{thm:secmain} (a)  with $\beta/\gamma=2$, $\frac{\overline\psi_n-n\int_{Y}\overline\psi\, d\mu_Y}{\sqrt{n\log n}}\to^d \mathcal N(0,\sigma_0^2)$ for some non-zero $\sigma_0$ (see,~\cite[Theorem 3]{Sar06}).
  This is a Gaussian limit but with non-standard scaling $\sqrt{n\log n}$.
  The same type of limit lifts to the flow (see, for instance ~\cite[Lemma 6.3]{BruTerTod21}).
  
  In either of these two cases, that is $\beta/\gamma\in (1,2]$ in Theorem~\ref{thm:secmain} (a), the leading H{\"o}lder exponent depends on $\beta$ and $\gamma$.

  As soon as one has a CLT with standard normalization $\sqrt{n}$, as in  Theorem~\ref{thm:secmain}(b), the leading H{\"o}lder exponent is $1/2$, independent of $\beta$ and $\gamma$.
  Theorem~\ref{thm:secmain}  captures the transition from a stable law to the CLT with standard scaling in terms of the H{\"o}lder continuity of the pressure (in the weak* norm): the change in the H{\"o}lder exponent makes this precise.
  
 \item[(b)] We believe that some version of item Theorem~\ref{thm:secmain}(a) persists if one weakens the assumption to $\psi_0\in (C_1 \tau^\gamma, C_2\tau^\gamma)$ 
 with $C_1, C_2 >0$, 
  and even under weaker assumptions on the tail of $\tau$.
  In addition to the need to control the precise upper and lower bounds for $p'(s)-p'(0)$ in Proposition~\ref{prop:gmsec}(a)
  (which make the calculations seriously more cumbersome), one needs to ensure that $p''(s)>0$. This is very heavy in terms
  of calculations without assumptions that ensure regular variation of  $\psi_0$.  
   We do not pursue this here.
   \end{itemize}
  \end{rmk}

 \begin{rmk}\label{rmkkinkm}  We can interpret  \eqref{eq:heavyHold} and \eqref{eq:heavyquad} in Theorem~\ref{thm:secmain} (b)
 as follows: the pressure function has a
polynomial (in fact quadratic) form for $\beta/\gamma \in (2,3)$, but as $\beta/\gamma$ drops below
$2$, then the H\"older exponent jumps to $(\beta-\gamma+1)/(\beta-\gamma) > 1+1/\gamma > 2$.
This gives a kink in the second derivative of the pressure as function of the weak$^*$-norm of the measures.
This represents a phase transition of order $3$ if
$(\beta-\gamma+1)/(\beta-\gamma) \in (2,3)$ or of higher order if
$(\beta-\gamma+1)/(\beta-\gamma) \geq 3$.
\end{rmk}

\begin{rmk}
The EKP formula can fail to hold under our assumptions (GM0) and (GM1), when $\int\psi~d\nu<\int\psi~d\nu_\phi$.
We demonstrate this for
the Pomeau-Manneville map $f_\alpha:x \mapsto x(1+x^\alpha) \bmod 1$ on the unit interval
with $\alpha \in (0,1)$.
The induced map $T = f_\alpha^\tau$ on the domain $Y$ of the second branch is a full-branched Gibbs-Markov map.
 The potential $\phi = \log f'_\alpha$, so $\bar\phi = \log T'$, satisfies $P(\phi)=0$, and the equilibrium measure $\mu_{\bar\phi}$ is a Gibbs measure with $n^{-(\beta+1)}\ll \mu_{\bar\phi}(\tau = n) \ll n^{-(\beta+1)}$ for $\beta = 1/\alpha$.
Take the potential $\psi = C_0 \cdot 1_Y - C_1$ for some $C_0, C_1 > 0$, so
$\bar\psi(y) = C_0-\psi_0(y) = C_0-C_1\tau(y)$ where $C_0$ is sufficiently large that $\int\psi~d\nu_\phi>0$.

The partition $\{ a_k \}$ of $T$ has exactly one interval $a_k$ with $\tau|_{a_k} = k$ for each $k \geq 1$. Let $x_k \in a_k$ be such that $T(x_k) = x_k$, and let $\nu_k$ be the equidistribution on the orbit of $x_k$ under $f_\alpha$. The Gibbs property of $\mu_{\bar\phi}$, recalling that we assume $P(\phi)=0$, implies that
$e^{\bar\phi(x_k)} \gg \mu_{\bar\phi}(a_k) \gg k^{-(\beta+1)}$,
so $\bar\phi(x_k) \ge \log c - (\beta+1)\log k$ for some $c > 0$.

The lift of $\nu_k$ is the Dirac measure at $x_k$, so Abramov's formula gives $\int\phi~d\nu_k =  \frac{ \delta_{x_k}(\bar\phi)} { \delta_{x_k}(\tau)} \geq \frac{\log c - (\beta+1)\log k}{k}$.
Since also $h_{\nu_k}(f_\alpha) = 0$, we get
$$
0 = P(\phi) \geq P_{\nu_k}(\phi) = h_{\nu_k}(f_\alpha) + \int\phi~d\nu_k \ge
\frac{1}{k}\left( \log c - (\beta+1)\log k\right)\to 0.
$$
Finally notice that
\begin{align*}
 \int\psi~d\nu_k =\frac{\bar\psi(x_{a_k})}{k} = \frac{C_0-\psi_0(x_{a_k})}{k} =   \frac{C_0-C_1 k}{k}\to -C_1,
\end{align*}
as $k \to \infty$. Hence for any $C, \rho>0$ we can find $k$ such that
$$
\int\psi~d\nu_\phi-\int\psi~d\nu_k> C\left( P(\phi)-P_{\nu_k}(\phi)\right)^\rho,
$$
violating the EKP.

We stress that for other systems for which an induced map is Gibbs-Markov system with polynomial tail, we generally expect the same type of argument as above can be performed: the key, natural, requirement is that $\mu_{\bar\phi}(a_k) \gg k^{-(\beta+1)}$ for some infinite sequence of $k$.

We close this remark by pointing out that in this example the pressure function is not differentiable at 0.  Indeed for any $s < 0$, 
there is $k \in \N$ such that $p(s) = P(\phi + s\psi)  \geq \int \phi + s \psi~d\nu_k > -s C_1 - s^2$.
Therefore, the left derivative of $p(s)$ at zero is
$$
\lim_{s \nearrow 0} \frac{p(s)-p(0)}{s} \leq \lim_{s \nearrow 0} {-C_1-s} = -C_1 < 0.
$$
For $s \geq 0$, we have
$$
  p(s) = P(\phi + s\psi)  \geq \int \phi + s \psi~d\nu_\phi = P(\phi) + \int  s \psi~d\mu_\phi = s \int  \psi~d\nu_\phi,
$$
so the graph of the pressure function lies above a line with slope $\int  \psi~d\nu_\phi$.
Recall that we chose $C_0, C_1 > 0$ such that $\int  \psi~d\nu_\phi > 0$, so this slope is positive.
Since also the pressure function is convex, this implies that $p(s)$ is increasing for $s \geq 0$ and $p'(s) \geq \int \psi~ d\nu_\phi > 0$.
However, the left derivative of $p$ at $s = 0$ is negative so $p$ is not differentiable at $s=0$.
\label{rmk:EKPfail}
\end{rmk}

Finally we give an analogue of \cite[Theorem 7.1]{RuhSar22} in our setting, which handles the case when $\int\psi~d\nu$ is far from $\int\psi~d\nu_\phi$.  Note that our constant $C_{\phi, \psi}'$ is not very refined here, but also that we are dealing with some cases of unbounded potentials $\psi$, so we would not expect as much control as when we have boundedness.

\begin{thm}\label{thm:bigint} Assume (GM0) and (GM1). In the setup of Theorem~\ref{thm:firstmain} and \ref{thm:secmain} (b), let $\rho=1/2$. 
 In the setup of Theorem~\ref{thm:secmain} (a), let $\rho=\frac{\beta-\gamma}{\beta-\gamma+1}$.

 There exists $C_{\phi, \psi}'>0$ 
  so that for any $F$-invariant probability measure $\nu$ with 
  $\int\psi~d\nu> \int\psi~d\nu_\phi$, we have
 \[
\int\psi\,d\nu-\int\psi\,d\nu_{\phi} 
     \le C_{\phi, \psi}'\left(P_{\nu_\phi}(\phi)-P_\nu(\phi)\right)^\rho.
     \] 
\end{thm}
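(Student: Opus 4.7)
The idea is to extend Theorems~\ref{thm:firstmain} and~\ref{thm:secmain} from the local regime $\int\psi\,d\nu<\int\psi\,d\nu_\phi+\eps$ to arbitrary $F$-invariant $\nu$ with $\int\psi\,d\nu>\int\psi\,d\nu_\phi$, in the spirit of~\cite[Theorem 7.1]{RuhSar22}. I split into two cases: when $\int\psi\,d\nu-\int\psi\,d\nu_\phi<\eps$, the conclusion follows at once from the appropriate theorem (with $\rho=1/2$ or $\rho=\frac{\beta-\gamma}{\beta-\gamma+1}$ as stipulated), yielding a constant $C_1$. When $\int\psi\,d\nu-\int\psi\,d\nu_\phi\ge\eps$, I show that $P_{\nu_\phi}(\phi)-P_\nu(\phi)$ is uniformly bounded below by some $\delta>0$, while $\int\psi\,d\nu-\int\psi\,d\nu_\phi$ is uniformly bounded above; trading these two bounds turns the inequality into one with a possibly larger constant.

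The uniform upper bound is the first observation: by (GM1), $\overline\psi=C_0-\psi_0\le C_0$, and Kac's formula combined with $\inf\tau>0$ gives
\[
\int\psi\,d\nu=\frac{\int\overline\psi\,d\mu}{\int\tau\,d\mu}\le\frac{C_0}{\inf\tau}=:M_0<\infty,
\]
so $\int\psi\,d\nu-\int\psi\,d\nu_\phi\le M_0-\int\psi\,d\nu_\phi=:M$ for every admissible $\nu$.

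The gap bound on the pressure deficit is the only substantive step. By the variational principle, for every $s\ge 0$,
\[
p(s)=P_F(\phi+s\psi)\ge h_F(\nu)+\int(\phi+s\psi)\,d\nu=P_\nu(\phi)+s\int\psi\,d\nu,
\]
and since $p(0)=P(\phi)=P_{\nu_\phi}(\phi)$,
\[
P_{\nu_\phi}(\phi)-P_\nu(\phi)\ge s\int\psi\,d\nu-(p(s)-p(0)).
\]
Convexity of $p$ (a general property of pressure along a line in the potential) gives $p(s)-p(0)\le s\,p'(s)$, hence
\[
P_{\nu_\phi}(\phi)-P_\nu(\phi)\ge s\bigl(\textstyle\int\psi\,d\nu-p'(s)\bigr).
\]
By Proposition~\ref{prop:gm}(ii), $p$ is $C^{q_1}$ on $[0,\delta_0)$ with $p'(0^+)=\int\psi\,d\nu_\phi$; I pick $s_0\in(0,\delta_0)$ small enough that $p'(s_0)\le\int\psi\,d\nu_\phi+\eps/2$. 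Then for any $\nu$ with $\int\psi\,d\nu\ge\int\psi\,d\nu_\phi+\eps$,
\[
P_{\nu_\phi}(\phi)-P_\nu(\phi)\ge s_0\cdot\eps/2=:\delta>0.
\]

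Putting it together: in the second case, $\int\psi\,d\nu-\int\psi\,d\nu_\phi\le M\le M\delta^{-\rho}(P_{\nu_\phi}(\phi)-P_\nu(\phi))^\rho$, and taking $C'_{\phi,\psi}:=\max\bigl(C_1,\,M\delta^{-\rho}\bigr)$ delivers the theorem in both cases. The main obstacle is the gap step, but it is purely a convexity-plus-regularity argument; the substantive regularity input is Proposition~\ref{prop:gm}(ii), and the uniform upper bound on $\int\psi\,d\nu$ is delicate only because $\psi$ may be unbounded below, an issue circumvented by (GM1) ensuring $\overline\psi$ is bounded above.
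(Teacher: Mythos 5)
Your proof is correct, and it takes a genuinely simpler route than the paper. The paper follows the scheme of R\"uhr--Sarig's Theorem 7.1: it normalizes $\psi' = \psi / C''_{\phi,\psi}$, establishes (via an analogue of their Lemma 5.1) that the restricted pressure $q = q_{\phi,\psi'}$ is well-defined, finite and concave on a suitable domain, shows $q$ is strictly decreasing past some $a_1$, and then combines concavity with the strict decrease to get the linear lower bound $q(a_0) - q(a) \ge \eta(a - a_0)$ for $a > a_1$ before trading this against the trivial bound $\tfrac12(\int\psi'\,d\nu - \int\psi'\,d\nu_\phi)\le 1$. You bypass $q$ altogether: the variational principle gives $P_\nu(\phi) \le p(s) - s\int\psi\,d\nu$, convexity of $p$ gives $p(s) - p(0) \le s\,p'(s)$, and continuity of $p'$ at $0^+$ (from Proposition~\ref{prop:gm}(ii)) lets you choose $s_0$ with $p'(s_0) < \int\psi\,d\nu_\phi + \eps/2$, producing the uniform gap $P_{\nu_\phi}(\phi) - P_\nu(\phi)\ge s_0\eps/2$ in the ``far'' regime directly. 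The trade-off against the uniform upper bound $\int\psi\,d\nu \le C_0/\essinf\tau$ (which (GM1) and Kac supply; note you implicitly use $\int\overline\psi\,d\mu\ge 0$ in your division --- but if $\int\overline\psi\,d\mu<0$ the bound is trivial anyway) then closes the argument. What your approach buys is elementariness: no need for the concavity-of-$q$ machinery or the auxiliary normalization of $\psi$. What the paper's route buys is closer fidelity to the template of [RS, Theorem 7.1], and a slightly more informative linear relation $q(a)-q(a_0)\le -\eta(a-a_0)$ that could be of independent interest; both yield a constant $C'_{\phi,\psi}$ of the same quality.
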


\section{Proof of Proposition~\ref{prop:gm}}
\label{sub:firstprop}

As is customary in the literature, due to the Ruelle-Perron-Frobenius (RPF) Theorem, in the setup of Gibbs-Markov maps $T:Y\to Y$ (see, for instance,~\cite[Section 3.3]{BruTerTod19}),  the study of
the pressure function $P_{T}(\overline{\phi+s\psi})$ comes down to the study of a perturbed version of the transfer operator $R:L^1(\mu_{\bar\phi})\to L^1(\mu_{\bar\phi})$. In particular, we identify $P_{T}(\overline{\phi+s\psi -u})$, $u\in [0,\delta)$, $s\in (0,\delta)$
for some $\delta>0$ with $\log\lambda(u,s)$, where $\lambda(u,s)$ is the leading eigenvalue
of the perturbed transfer operator 
\begin{equation*}
 R(u,s) v=R(e^{-u\tau} e^{s\overline{\psi}} v), \qquad u, s \in [0,\delta_0),\ v\in L^1(\mu_{\bar\phi}).
\end{equation*}
Note that by the argument at the end of Remark~\ref{rmk:p>0} coupled with Abramov's formula,  $\int\psi~d\nu_\phi>0$ implies that $P_T(\overline{\phi+s\psi})>0$ for $s>0$.
We briefly recall the application of the RPF Theorem.
Note that  $R(0,0)=R$ for $u=s=0$. We already know that $R$ has a spectral gap in $\cB_\theta$; in particular, this means that $1$ is a simple eigenvalue, isolated in the spectrum of $R$.
Under (GM1), there exists $\delta_0>0$ so that $\|R(u,s)-R(u,0)\|_{\cB_\theta}\ll s^\eps$, for some
$\eps>0$ and all $u, s \in [0,\delta_0)$. The proof of this fact is standard; for instance, it is an easier version of~\cite[Proof of Lemma 5.2]{BruTerTod19} ($\beta<1$ there gives some $\eps>0$ here). In fact, much more is true: see Lemma~\ref{lemma:sm} below.
Since we also know that $u\mapsto R(u,0)$ is analytic in  $u\in [0,\delta_0)$, there exists
a family of eigenvalues $\lambda(u,s)$ analytic in $u\in [0,\delta_0)$ and $C^1$ in $s\in [0,\delta_0)$
with $\lambda(0,0)=1$. By the RPF Theorem,
\begin{align}\label{eq:presindev}
 \bar p(u,s)=P_{T}(\overline{\phi+s\psi -u})=\log\lambda(u,s), \qquad  u, s\in [0,\delta_0).
\end{align}

To study the smoothness of $\lambda(u,s)$, as a function of $u$ and $s$, we need to recall some facts about the smoothness
of $R(u,s)$.

For non-integer $q_{*}\in\R_{+}$, we write $[q_{*}]$ for the integer part and say that a function $g:\R\to \R$ is $C^{q_*}$ 
if $|g|_{C^{[q_{*}]}}<\infty$ and $\sup_{x_1\neq x_2} |x_1-x_2|^{-(q_{*}-[q_{*}])}|\frac{\partial^{[q^*}}{\partial x^{[q_*]}}g(x_1)-\frac{\partial^{[q^*]}}{\partial x^{[q_*]}}g(x_2)|<\infty$. In a similar manner, we talk about the smoothness of $s\to R(u,s)$
and $u\to R(us,s)$. The statement of Lemma~\ref{lemma:sm} below makes this precise.

 Let $q_0$ and $q_1$ be as in~\eqref{tauq} and~\eqref{psi0q}.
Throughout, we write
\begin{align}\label{eq:gh}
 G_{[q_0]}(u,s)=\frac{\partial^{[q_0]}}{\partial u^{[q_0]}}R(u,s),\quad H_{[q_1]}(u,s)=\frac{\partial^{[q_1]}}{\partial s^{[q_1]}}R(u,s)
 \end{align}
and 
\begin{align}\label{eq:kk}
  K_{[q_1]}(u,s)=\frac{\partial^{[q_1]}}{\partial s^{[q_1]}}\frac{\partial}{\partial u} R(u,s).
\end{align}

\begin{lemma}\label{lemma:sm}
 Assume (GM0) and (GM1). Let $q_0$ and $q_1\in [1,\beta/\gamma)$ be so that~\eqref{tauq} and~\eqref{psi0q} hold.
 
 Let $G$, $H$ and $K$ as in~\eqref{eq:gh} and~\eqref{eq:kk}. Let $u, s \in [0,\delta_0)$. 
 Then $\|G_{[q_0]}(u,s)\|_{\cB_\theta}<\infty$ and $\|H_{[q_1]}(u,s)\|_{\cB_\theta}<\infty$. Moreover, there exists $C>0$ so that 
 \begin{itemize}\item[(i)]for all $u_1,u_2,s_1,s_2\in [0, \delta_0)$,
 \[
  \|G_{[q_0]}(u_1,s)-G_{[q_0]}(u_2,s)\|_{\cB_\theta}\le C|u_1-u_2|^{q_0-[q_0]},
  \]
    \[\|H_{[q_1]}(u,s_1)-H_{[q_1]}(u,s_2)\|_{\cB_\theta}\le C|s_1-s_2|^{q_1-[q_1]}.
 \]
  \item[(ii)] for all $u>0$ and $s_1,s_2 \in [0,\delta_0)$,
  \(
  \|K_{[q_1]}(u,s)\|_{\cB_\theta}\le C u^{\beta-q_1\gamma-1}
   \)
   and
  \[
  \|K_{[q_1]}(u,s_1)-K_{[q_1]}(u,s_2)\|_{\cB_\theta}\le C|s_1-s_2|^{q_1-[q_1]}\cdot u^{\beta-q_1\gamma-1}.
 \]
 \end{itemize}
\end{lemma}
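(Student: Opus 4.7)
The plan is to work with the branch-wise formula $R(u,s)v(y)=\sum_a g_a(y)\,e^{-u\tau(\xi_a y)}e^{s\bar\psi(\xi_a y)}\,v(\xi_a y)$, where $\xi_a=(T|_a)^{-1}$, and differentiate under the sum to obtain
\[
G_{[q_0]}(u,s)v = R\bigl((-\tau)^{[q_0]} e^{-u\tau} e^{s\bar\psi} v\bigr),\quad H_{[q_1]}(u,s)v = R\bigl(\bar\psi^{[q_1]} e^{-u\tau} e^{s\bar\psi} v\bigr),
\]
\[
K_{[q_1]}(u,s)v = R\bigl(-\tau\,\bar\psi^{[q_1]} e^{-u\tau} e^{s\bar\psi} v\bigr).
\]
The first task is to show each of these operators is bounded on $\cB_\theta$. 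Here I would exploit condition~\eqref{eq:inf}, which makes $\tau$ and $\psi_0$ (hence $\bar\psi=C_0-\psi_0$) essentially constant on each atom $a$, together with the Gibbs--Markov distortion of $\log g_a$ and the big-image property. Both the $L^\infty$ part and the $|\cdot|_\theta$ part of the $\cB_\theta$-norm then reduce to sums of the form $\sum_a\mu_{\bar\phi}(a)\,\tau(a)^{[q_0]}$ and $\sum_a\mu_{\bar\phi}(a)\,|\bar\psi(a)|^{[q_1]}$, which are finite by~\eqref{tauq}--\eqref{psi0q} since $[q_0]<\beta$ and $[q_1]\gamma<\beta$.

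For the H{\"o}lder-in-parameter estimates of part~(i) I would use the elementary interpolation
\[
|e^{-u_1\tau}-e^{-u_2\tau}|\le C\,|u_1-u_2|^{q_0-[q_0]}\,\tau^{q_0-[q_0]},
\]
obtained by geometrically averaging $|e^{-a}-e^{-b}|\le|a-b|$ with $|e^{-a}-e^{-b}|\le 2$, and the analogous bound for $|e^{s_1\bar\psi}-e^{s_2\bar\psi}|$. Substituting into $G_{[q_0]}(u_1,s)-G_{[q_0]}(u_2,s)=R(\tau^{[q_0]}(e^{-u_1\tau}-e^{-u_2\tau})e^{s\bar\psi}\,\cdot\,)$ produces the required factor $|u_1-u_2|^{q_0-[q_0]}$ at the cost of an extra weight $\tau^{q_0-[q_0]}$. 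The total weight $\tau^{q_0}$ is still $L^1(\mu_{\bar\phi})$ by~\eqref{tauq}, so the branch-wise bound of the previous step closes the estimate. The $s$-derivative case works the same way, with $|\bar\psi|^{q_1}\lesssim 1+\tau^{q_1\gamma}$ and $q_1\gamma<\beta$.

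For part~(ii), the extra factor of $\tau$ from the $u$-derivative must be absorbed by $e^{-u\tau}$ to produce a $u$-dependent rate. Bounding $|\bar\psi|^{[q_1]}\lesssim 1+\tau^{[q_1]\gamma}$ and absorbing the fractional piece $q_1-[q_1]$ by the same interpolation as above reduces the $\cB_\theta$ estimate to controlling $\int_Y\tau^{1+q_1\gamma}e^{-u\tau}\,d\mu_Y$, which by the tail bound $\mu_Y(\tau\ge x)\le cx^{-\beta}$ and the substitution $y=u\tau$ equals $O(u^{\beta-q_1\gamma-1})$. Combined with the branch-wise control of the first step this gives $\|K_{[q_1]}(u,s)\|_{\cB_\theta}\le Cu^{\beta-q_1\gamma-1}$; the H{\"o}lder-in-$s$ difference follows by feeding the $e^{s_1\bar\psi}-e^{s_2\bar\psi}$ interpolation into the same computation.

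The main obstacle is keeping the $|\cdot|_\theta$-seminorm under control as $\tau(a)\to\infty$: the naive atom-wise variation of $\tau^{[q_0]}e^{-u\tau}$ or of $\tau\bar\psi^{[q_1]}e^{-u\tau}$ picks up an extra power of $\tau(a)$ from differentiating in the variable, which threatens to destroy summability against $\mu_{\bar\phi}(a)\sim\tau(a)^{-(\beta+1)}$. Condition~\eqref{eq:inf}, which says $D_a\tau\le C\inf(1_a\tau)$ (and likewise for $\psi_0$), is exactly what tames this: it makes $D_a\tau/\inf_a\tau$ uniformly bounded, so the $\theta$-variation inherits the same polynomial size as the supremum and the series converges at the same rate as in the $L^\infty$ estimate.
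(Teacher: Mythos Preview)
Your proposal is correct and follows essentially the same route as the paper. The paper packages the branch-wise Gibbs--Markov estimate you describe into the single inequality $\|R(w\,v)\|_{\cB_\theta}\le C\,|w|_{L^1(\mu_{\bar\phi})}|v|_\theta$ (imported from \cite[Proposition~12.1]{MelTer17}) and then feeds into it exactly the same H{\"o}lder interpolation $|e^{-s_1\psi_0}-e^{-s_2\psi_0}|\ll |s_1-s_2|^{q_1-[q_1]}\psi_0^{q_1-[q_1]}$ and the same tail computation $\int_Y\tau^{q_1\gamma+1}e^{-u\tau}\,d\mu_{\bar\phi}\ll u^{\beta-q_1\gamma-1}$ that you outline; your final paragraph on how \eqref{eq:inf} controls the $|\cdot|_\theta$-seminorm is precisely the content of that cited proposition.
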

\begin{rmk}\label{rmk:q1}
 Recall that under (GM1), $\gamma>\beta-1$. Hence, $q_1\in [1,\beta/\gamma)$ is so that
  $\beta-q_1\gamma<1$. This means that
 in Lemma~\ref{lemma:sm}(ii), the factor in $u$ blows up as $u\to 0$, but in a controlled way.
 \end{rmk}

\begin{proof}The first statements on $G_{[q_0]}(u,s)$ and $H_{[q_1]}$ follow immediately from~\cite[Proposition 12.1]{MelTer17}.
 Assumption (A1) there is part of (GM0), (GM1) here and the involved constants depend
 on the $L^{q_0}(\mu_{\overline{\phi}}), L^{q_1}(\mu_{\overline{\phi}})$ norm of $\tau,\bar\psi$ respectively, on $\theta\in (0,1)$
 and on the constants in (GM0), (GM1).
 
 We sketch the argument for the statement on $H_{[q_1]}$ and as a consequence, the somewhat easier fact that 
 $G_{[q_0]}(u,s)$ is $C^{q_1}$ in $s$.
 By the argument used in the proof of~\cite[Proposition 12.1]{MelTer17},
 for $w\in L^1(\mu_{\overline{\phi}})$ with $\essinf w>0$ and satisfying~\eqref{eq:inf},
we obtain
 \begin{align}\label{eq:use1}
 \|R(w\,v)\|_{\cB_\theta}\le C |w|_{L^1(\mu_{\overline{\phi}})}|v|_{\theta},
 \end{align}
for some $C>0$ depending on the constant appearing in \eqref{eq:inf}.

 Under (GM1), $\psi_0\in L^{q_1}(\mu_{\overline{\phi}})$. Since
 $H_{[q_1]}(u,s)\tilde v=R(\bar\psi^{[q_1]} e^{-u\tau} e^{s C_0}e^{-s\psi_0}\tilde v)$, the first statement
 on $H_{[q_1]}$ follows immediately from~\eqref{eq:use1} with $w=\bar\psi^{[q_1]}$ and $v=e^{-u\tau} e^{s C_0}e^{-s\psi_0}\tilde v$.
 Throughout the rest of the proof, we will heavily exploit~\eqref{eq:use1}, but we will not write down the explicit form of $w$ and $v$.\\[3mm]
 {\bf{Proof of item (i)}} Using~\eqref{eq:use1}, we compute that
 \begin{align*}
   \|(H_{[q_1]}(u,s_1)&-H_{[q_1]}(u,s_2))v\|_{\cB_\theta}
   \le \left\|R\left(\bar\psi^{[q_1]}(e^{s_1 C_0}-e^{s_2 C_0})e^{-s_1\psi_0}e^{-u\tau} v\right)\right\|_{\cB_\theta}\\
   &\quad +\left\|R(\bar\psi^{[q_1]}(e^{-s_1 \psi_0}-e^{-s_2 \psi_0})e^{s_2C_0}e^{-u\tau}v)\right\|_{\cB_\theta}\\
   &\le C_0|s_1-s_2|\left\|R(\bar\psi^{[q_1]} e^{-s\psi_0}v)\right\|_{\cB_\theta}
  + C\left|\bar\psi^{[q_1]}(e^{-s_1 \psi_0}-e^{-s_2 \psi_0})e^{-u\tau}\right|_{L^1(\mu_{\overline{\phi}})}|v|_{\theta}\\
  &\le C'|s_1-s_2|\,|\bar\psi^{[q_1]}|_{L^1(\mu_{\overline{\phi}})}|v|_{\theta}+ C\left|\bar\psi^{[q_1]}(e^{-s_1 \psi_0}-e^{-s_2 \psi_0})e^{-u\tau}\right|_{L^1(\mu_{\overline{\phi}})}|v|_{\theta},
     \end{align*}
 for some $C, C'>0$.
  
The second statement on $H_{[q_1]}$ follows since
\[
 |\bar\psi^{[q_1]}(e^{-s_1 \psi_0}-e^{-s_2 \psi_0}) e^{-u\tau}|_{L^1(\mu_{\overline{\phi}})}
\ll|s_1-s_2|^{q_1-[q_1]}\, |\psi_0^{q_1}|_{L^{1}(\mu_{\overline{\phi}})}\ll |s_1-s_2|^{q_1-[q_1]}.
\]

{\bf{Proof of item (ii)}} 
First, $K_{[q_1]}(u,0)=-R(\bar\psi^{[q_1]}\tau e^{-u\tau})$. Using~\eqref{eq:use1}, 
\(
\|(K_{[q_1]}(u,0)v\|_{\cB_\theta}\le C\left|\bar\psi^{[q_1]}\tau e^{-u\tau} \right|_{L^1(\mu_{\overline{\phi}})}.
\)
To estimate this quantity, let $S(x)=\mu_{\overline{\phi}}(\tau>x)$ and recall from Remark~\ref{rmk:q1} that $\beta-q_1\gamma<1$.  Integrating by parts and using (GM0),
\begin{align}\label{eq:utau}
 \nonumber \int_Y \tau^{q_1\gamma+1} & e^{-u\tau}\, d\mu_{\overline{\phi}}=- \int_0^{\infty} x^{q_1\gamma+1} e^{-ux}\, d(1-S(x))\\
 \nonumber &=(q_1\gamma+1)\int_0^\infty x^{q_1\gamma}(1-S(x)) e^{-ux}\, d x
 -u\int_0^\infty x^{q_1\gamma+1}(1-S(x)) e^{-ux}\, d x 
 \\
 \nonumber &\ll \int_0^\infty x^{-(\beta-q_1\gamma)} e^{-ux}\, d x+u\int_0^\infty x^{-(\beta-q_1\gamma+1)})e^{-ux}\, d x\\
 &\ll u^{\beta-q_1\gamma-1}
\left( \int_0^\infty t^{-(\beta-q_1\gamma)} e^{-t}\, d t+ \int_0^\infty t^{-\beta+q_1\gamma+1} e^{-t}\, d t\right)
 \nonumber \\
 &\ll u^{\beta-q_1\gamma-1}.
\end{align}
Hence, $\|(K_{[q_1]}(u,0)v\|_{\cB_\theta}\le Cu^{\beta-q_1\gamma-1}$, as claimed.

Using that $K_{[q_1]}(u,s)=-R(\bar\psi^{[q_1]}\tau e^{-u\tau} e^{s C_0}e^{-s\psi_0})$, we compute that  
\begin{align*}
 \left\|(K_{[q_1]}(u,s_1)-K_{[q_1]}(u,s_2))v\right\|_{\cB_\theta} &\le
 \left\|R\left(\bar\psi^{[q_1]}\tau(e^{s_1 C_0}-e^{s_2 C_0})e^{-s_1\psi_0}e^{-u\tau}\, v\right)\right\|_{\cB_\theta}\\
 &\quad +\left\|R\left(\bar\psi^{[q_1]}\tau(e^{-s_1 \psi_0}-e^{-s_2 \psi_0})e^{s_2C_0}e^{-u\tau}\, v\right)\right\|_{\cB_\theta}
\end{align*}
Using~\eqref{eq:use1} we obtain that there exists $C>0$ so that

\begin{align}\label{eq:estK}
 \nonumber\|K_{[q_1]}(u,s_1)-K_{[q_1]}(u,s_2)\|_{\cB_\theta}&\le C_0|s_1-s_2|\,  \left|\bar\psi^{[q_1]}\tau e^{-u\tau}\right|_{L^1(\mu_{\overline{\phi}})}\\
 &\qquad +C\left|\bar\psi^{[q_1]}\tau(e^{-s_1 \psi_0}-e^{-s_2 \psi_0})e^{-u\tau}\right|_{L^1(\mu_{\overline{\phi}})}.
\end{align}
Regarding the first term in \eqref{eq:estK}, recall (GM1) and note that $|\bar\psi^{[q_1]}\tau e^{-u\tau}|_{L^1(\mu_{\overline{\phi}})}\ll |\tau^{q_1\gamma+1} e^{-u\tau}|_{L^1(\mu_{\overline{\phi}})}$. This together with~\eqref{eq:utau} implies that the first term in~\eqref{eq:estK} is bounded by $|s_1-s_2|\, u^{\beta-q_1\gamma-1}$.
  
  It remains to estimate the second term in~\eqref{eq:estK}. Using (GM1), compute that
  \begin{align*}
  \left|\psi_0^{[q_1]}\tau(e^{-s_1 \psi_0}-e^{-s_2 \psi_0})e^{-u\tau}\right|_{L^1(\mu_{\overline{\phi}})}
  &\ll |s_1-s_2|^{q_1-[q_1]}\cdot|\psi_0^{q_1}\,\tau e^{-u\tau}|_{L^1(\mu_{\overline{\phi}})}\\
  &\ll |s_1-s_2|^{q_1-[q_1]}\cdot|\tau^{ q_1\gamma+1} e^{-u\tau}|_{L^1(\mu_{\overline{\phi}})}.
 \end{align*}
By~\eqref{eq:utau}, $|\tau^{ q_1\gamma+1} e^{-u\tau}|_{L^1(\mu_{\overline{\phi}})}\ll u^{\beta-q_1\gamma-1}$ and the conclusion follows.~\end{proof}

A consequence of Lemma~\ref{lemma:sm} is that the family of eigenvalues $\lambda(u,s)$ has 
`good' smoothness properties. Recall that $\tau^*,\bar\psi^*$ are as in Proposition~\ref{prop:gm} (ii).

\begin{cor}\label{cor:smev}The following hold in the setup of Lemma~\ref{lemma:sm}.  Let $u, s\in [0,\delta_0)$.
\begin{itemize}
 \item[(i)] $\lambda(u,s)=1+g(u,s)$, where $g(u,s)\to 0$ as $u,s\to 0$
 and $g(u,s)$ is $C^{q_0}$ in $u$ and $C^{q_1}$ in $s$.
 \item[(ii)] $\frac{\partial}{\partial u}\lambda(u,s)=-\tau^*+d(u,s)$, where $d(u,s)$ is $C^{q_0-1}$ in $u$ and $C^{q_1}$ in $s$
 and $d(u,0)\to 0$ as $u\to 0$.
 Moreover,
 $\frac{\partial}{\partial s}\lambda(u,s)=\bar\psi^*+e(u,s)$,
  where $e(u,s)$ is $C^{q_0}$ in $u$ and $C^{q_1-1}$ in $s$
  and $e(u,s)\to 0$ as $u,s\to 0$.
  
  \item[(iii)]Let
  $\kappa(u,s)=\frac{\partial}{\partial s}\frac{\partial}{\partial u}\lambda(u,s)$.
  Then for all $u,s\in [0,\delta_0)$,
  \(
  |\kappa(u,s)|\le C u^{\beta-q_1\gamma-1}   \)
   and $\kappa(u,s)$ is $C^{q_1-1}$ in $s$.
\end{itemize}
\end{cor}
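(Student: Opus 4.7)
The plan is to apply smooth perturbation theory for operators with a spectral gap to $R(u,s):\cB_\theta\to\cB_\theta$, using the regularity from Lemma~\ref{lemma:sm} as input. Since $R=R(0,0)$ has $1$ as a simple isolated eigenvalue with right eigenfunction $h_0\equiv 1$ and left eigenfunction $\ell_0=\mu_{\bar\phi}$, I will fix a small closed curve $\Gamma\subset\mathbb{C}$ separating $1$ from the rest of $\spec(R)$. Norm continuity from Lemma~\ref{lemma:sm}(i) gives $\|R(u,s)-R\|_{\cB_\theta}\to 0$, so for $(u,s)\in[0,\delta_0)^2$ small the resolvent $(z-R(u,s))^{-1}$ exists on $\Gamma$, and the spectral projection $P(u,s)=\frac{1}{2\pi i}\oint_\Gamma(z-R(u,s))^{-1}\,dz$ is of rank one and inherits the regularity of $(u,s)\mapsto R(u,s)$ --- namely $C^{q_0}$ in $u$ and $C^{q_1}$ in $s$. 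Normalizing $h(u,s)=P(u,s)h_0/\int P(u,s)h_0\,d\mu_{\bar\phi}$ produces a right eigenfunction with $\int h(u,s)\,d\mu_{\bar\phi}=1$ and $h(u,s)\to 1$ in $\cB_\theta$.

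Item (i) is then immediate by writing $\lambda(u,s)=\int R(u,s)h(u,s)\,d\mu_{\bar\phi}$: composition of smooth maps delivers the regularity, and $\lambda(0,0)=1$ gives $g(u,s)\to 0$. For item (ii), I will differentiate this expression in $u$. The $R$-invariance of $\mu_{\bar\phi}$, combined with $\int h(u,s)\,d\mu_{\bar\phi}\equiv 1$, makes the $\partial_u h$ contribution drop out, yielding
\[
 \partial_u\lambda(u,s)=\int(\partial_u R(u,s))h(u,s)\,d\mu_{\bar\phi}=-\int\tau e^{-u\tau}e^{s\bar\psi}h(u,s)\,d\mu_{\bar\phi},
\]
which equals $-\tau^*$ at $(0,0)$. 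The error $d(u,s):=\partial_u\lambda(u,s)+\tau^*$ inherits its regularity from Lemma~\ref{lemma:sm}(i) applied to $G_{[q_0]}$ and $H_{[q_1]}$, and $d(u,0)\to 0$ by dominated convergence. The derivation of $\partial_s\lambda(u,s)=\bar\psi^*+e(u,s)$ is entirely analogous with $\bar\psi$ replacing $\tau$.

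For item (iii), I will differentiate the formula for $\partial_u\lambda$ in $s$ to obtain
\[
 \kappa(u,s)=-\int\tau\bar\psi\, e^{-u\tau}e^{s\bar\psi}h(u,s)\,d\mu_{\bar\phi}-\int\tau e^{-u\tau}e^{s\bar\psi}\partial_s h(u,s)\,d\mu_{\bar\phi}.
\]
The first term is bounded in absolute value by $\|K_1(u,s)\|_{\cB_\theta}\cdot\|h(u,s)\|_{\cB_\theta}$; using $|\bar\psi|\ll\tau^\gamma$ from (GM1) and the integration-by-parts estimate \eqref{eq:utau} underlying Lemma~\ref{lemma:sm}(ii), this is $O(u^{\beta-q_1\gamma-1})$. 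The second term I plan to handle via the resolvent identity $\partial_s P(u,s)=\frac{1}{2\pi i}\oint_\Gamma (z-R(u,s))^{-1}(\partial_s R(u,s))(z-R(u,s))^{-1}\,dz$, which keeps $\|\partial_s h(u,s)\|_{\cB_\theta}$ uniformly bounded near $(0,0)$; together with \eqref{eq:utau}, this contributes the same singular order. The $C^{q_1-1}$ regularity in $s$ then follows by further differentiation in $s$ and the H\"older-type bound from Lemma~\ref{lemma:sm}(ii).

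The main obstacle I anticipate is the interaction between the limited $s$-smoothness of $R(u,s)$ and the $u\to 0$ blow-up of its mixed derivatives: one must ensure that the singular factor $u^{\beta-q_1\gamma-1}$ in $\kappa(u,s)$ is not amplified by the $s$-derivatives of the eigenfunction $h(u,s)$. This rests on combining the resolvent representation of $P(u,s)$ with the uniform norm estimates from Lemma~\ref{lemma:sm}, and will require care to avoid inadvertently picking up an additional factor of $\tau$ in the integrands.
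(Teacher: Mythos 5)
Your overall strategy --- express $\lambda(u,s)$ via the eigenfunction $h(u,s)=v(u,s)$, differentiate, and control error terms via Lemma~\ref{lemma:sm} --- is the same as the paper's, which starts from the identity $1-\lambda(u,s)=\int_Y(1-e^{-u\tau}e^{s\bar\psi})\,d\mu_{\bar\phi}-\int_Y(1-e^{-u\tau}e^{s\bar\psi})(v(0,0)-v(u,s))\,d\mu_{\bar\phi}$ (which is exactly your $\lambda(u,s)=\int R(u,s)h(u,s)\,d\mu_{\bar\phi}$ with $\int h\,d\mu_{\bar\phi}=1$). However, there is a genuine error in your differentiation step for item (ii), and it propagates into item (iii).

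You claim that the $R$-invariance of $\mu_{\bar\phi}$ together with $\int h(u,s)\,d\mu_{\bar\phi}\equiv 1$ makes the $\partial_u h$ contribution drop out. This is the Feynman--Hellmann argument, but it requires pairing against the left eigenfunctional of $R(u,s)$, not against $\mu_{\bar\phi}$, which is the left eigenfunctional only of $R(0,0)$. Concretely, using $R$-invariance of $\mu_{\bar\phi}$,
\[
\int_Y R(u,s)\,\partial_u h(u,s)\,d\mu_{\bar\phi}
=\int_Y e^{-u\tau}e^{s\bar\psi}\,\partial_u h(u,s)\,d\mu_{\bar\phi},
\]
which is \emph{not} $\int_Y \partial_u h(u,s)\,d\mu_{\bar\phi}=0$ unless $e^{-u\tau}e^{s\bar\psi}\equiv 1$. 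The correct derivative is therefore
\[
\partial_u\lambda(u,s)=-\int_Y\tau e^{-u\tau}e^{s\bar\psi}h(u,s)\,d\mu_{\bar\phi}
+\int_Y\bigl(e^{-u\tau}e^{s\bar\psi}-1\bigr)\partial_u h(u,s)\,d\mu_{\bar\phi},
\]
and the second integral is precisely the term the paper carries along as (part of) $\frac{\partial}{\partial u}V(u,s)$. Correspondingly, your formula for $\kappa(u,s)$ is missing the two terms produced by differentiating this in $s$, namely $\int_Y\bar\psi\,e^{-u\tau}e^{s\bar\psi}\,\partial_u h\,d\mu_{\bar\phi}$ and $\int_Y\bigl(e^{-u\tau}e^{s\bar\psi}-1\bigr)\partial_s\partial_u h\,d\mu_{\bar\phi}$; the paper's proof (via Lemma~\ref{lemma:sm}(ii), the statement on $K$) is designed precisely to control the latter, which involves $\partial_s\partial_u v(u,s)$ with $\cB_\theta$-norm of order $u^{\beta-q_1\gamma-1}$. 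The omitted terms do in fact satisfy the required bounds and smoothness (by Lemma~\ref{lemma:sm}(i) for $G$ and Lemma~\ref{lemma:sm}(ii) for $K$), so the conclusion of the corollary is unaffected, but as written your derivation is incomplete: you must retain the $\partial_u h$ contribution, write it as $\int_Y(e^{-u\tau}e^{s\bar\psi}-1)\partial_u h\,d\mu_{\bar\phi}$, and bound it and its $s$-derivatives using Lemma~\ref{lemma:sm}, rather than claiming it vanishes.
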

\begin{proof}{\bf{(i)}.}
 Given that $v(u,s)$ is the normalized eigenvector corresponding to $\lambda(u,s)$,
 \begin{align}~\label{evnb}
 \nonumber 1 &-\lambda(u,s) =\int_Y (1-e^{-u\tau}e^{s\bar\psi})\, d\mu_{\overline{\phi}}
  -\int_Y (1-e^{-u\tau}e^{s\bar\psi})(v(0,0)-v(u,s))\, d\mu_{\overline{\phi}}\\
  \nonumber &:=\int_Y (1-e^{-u\tau}e^{s\bar\psi})\, d\mu_{\overline{\phi}} - V(u,s)\\
 &=\int_Y (1-e^{-u\tau})\, d\mu_{\overline{\phi}} - \int_Y (1-e^{s\bar\psi})\, d\mu_{\overline{\phi}} 
 +\int_Y (1-e^{-u\tau})(1-e^{s\bar\psi})\, d\mu_{\overline{\phi}} -V(u,s)\end{align}
 By Lemma~\ref{lemma:sm}, $V(u,s)\to 0$, as $u,s\to 0$ and item (i) follows.
 
 {\bf{(ii)}.} Using~\eqref{evnb}, compute that
 \begin{align*}
  -\frac{\partial}{\partial u}\lambda(u,s)&=\int_Y\tau\, d\mu_{\overline{\phi}}
  -\int_Y \tau(1-e^{-u\tau})\, d\mu_{\overline{\phi}}  -\int_Y\tau e^{-u\tau}(1-e^{s\bar\psi})\, d\mu_{\overline{\phi}}
  -\frac{\partial}{\partial u}V(u,s)\\
  &:=\int_Y\tau\, d\mu_{\overline{\phi}} +d(u,s).\end{align*}
 A calculation similar to the one used in obtaining~\eqref{eq:utau} (via (GM0) and (GM1)) shows that the functions
 $\int_Y \tau(1-e^{-u\tau})\, d\mu_{\overline{\phi}}$  and $\int_Y\tau e^{-u\tau}(1-e^{s\bar\psi})\, d\mu_{\overline{\phi}}$
 are $C^{q_0-1}$ in $u$  and also that $\int_Y\tau e^{-u\tau}(1-e^{s\bar\psi})\, d\mu_{\overline{\phi}}$
 is $C^{q_1}$ in $s$.
  Note that 
  \[
\frac{\partial}{\partial u}V(u,s)=\int_Y\tau e^{-u\tau}e^{s\bar\psi}(v(0,0)-v(u,s))\, d\mu_{\overline{\phi}}
-\int_Y (1-e^{-u\tau}e^{s\bar\psi})\, 
\frac{\partial}{\partial u}v(u,s))\, d\mu_{\overline{\phi}}.
 \]
  The required smoothness properties of $\frac{\partial}{\partial u}v(u,s)$ in $u$ and then in $s$,
  and as a consequence on $\frac{\partial}{\partial u}V(u,s)$, follow from
the statement on $G$ in Lemma~\ref{lemma:sm}(i) and from the statement on $K$ in Lemma~\ref{lemma:sm}(iii).
The statement on the smoothness of $\frac{\partial}{\partial u}\lambda(u,s)$ in $u$ and $s$ follows by putting all these together. 
Also, $d(u,0)=-\int_Y \tau(1-e^{-u\tau})\, d\mu_{\overline{\phi}} +O(u)$
and (by, for instance, the Dominated Convergence Theorem applied to $\int_Y \tau(1-e^{-u\tau})\, d\mu_{\overline{\phi}}$)
we obtain that $d(u,0)\to 0$ as $u\to 0$.

The statement on the smoothness of $\frac{\partial}{\partial s}\lambda(u,s)$ in $u$ and $s$ follows by a similar argument.

Item {\bf{(iii)}} is an immediate consequence of Lemma~\ref{lemma:sm}(ii).~\end{proof}

We can now proceed to

\begin{pfof}{Proposition~\ref{prop:gm}.}
 Throughout we will use Corollary~\ref{cor:smev} and the relation~\eqref{eq:presindev}.

{\bf{Proof of item (i)}.} 
Since $\bar p(u,s)=\log \lambda(u,s)$, using Corollary~\ref{cor:smev} (i) and (ii), 
 \begin{equation}\label{eq:expld1}
  \frac{\partial}{\partial u}\bar p(u,s)=\frac{\frac{\partial}{\partial u}\lambda(u,s)}{\lambda(u,s)}=-\tau^*+D(u,s),\quad \frac{\partial}{\partial s}\bar p(u,s)=\frac{\frac{\partial}{\partial s}\lambda(u,s)}{\lambda(u,s)} = \bar\psi^*+E(u,s),
 \end{equation}
 where
 \begin{itemize}
  \item[(a)] $D(u,s)$ is $C^{q_0-1}$ in $u$ and $C^{q_1}$ in $s$. Also, $D(u,0)\to 0$ as $u\to 0$.
  \item[(b)]$E(u,s)$ is $C^{q_0}$ in  $u$ and $C^{q_1-1}$ in $s$. Also, $E(u,0)\to 0$ as $u\to 0$.
 \end{itemize}
 
 In particular, $\bar p(0,s)=\lambda(0,s)-1+O(|1-\lambda(0,s)|^2)$
 and 
 \begin{align}\label{eq:derpos}
  \frac{\partial}{\partial s}\bar p(0,s)=\frac{\frac{\partial}{\partial s}\lambda(0,s)}{\lambda(0,s)}=\bar\psi^*+E(0,s),
 \end{align}
where $E(0,s)$ is $C^{q_1-1}$ in $s$.
 
 For use below in the proof of (ii), we also note that
 \begin{align}\label{eq:dounderpbar}
  \frac{\partial}{\partial s} D(u,s)&=\frac{\partial}{\partial s}\frac{\partial}{\partial u}\bar p(u,s)
  =  -\frac{\frac{\partial}{\partial u}\lambda(u,s)\frac{\partial}{\partial s}\lambda(u,s)}{\lambda(u,s)^2}+\frac{\frac{\partial}{\partial s}\frac{\partial}{\partial u}\lambda(u,s)}{\lambda(u,s)}\\
 \nonumber &=-\bar\psi^*\tau^*-E_0(u,s)+\frac{\frac{\partial}{\partial s}\frac{\partial}{\partial u}\lambda(u,s)}{\lambda(u,s)},
 \end{align}
where, using again Corollary~\ref{cor:smev} (i) and (ii),  $E_0(u,s)$ is $C^{q_0-1}$ in $u$ and $C^{q_1-1}$ in $s$. Moreover,  $\kappa(u,s)=\frac{\partial}{\partial s}\frac{\partial}{\partial u}\lambda(u,s)$ satisfies the properties listed in Corollary~\ref{cor:smev} (iii). In particular,  for all $u\in (0,\delta)$ and $s\in [0,\delta)$,
we have $|\kappa(u,s)|\ll u^{\beta-q_1\gamma-1}$ and $\kappa(u,s)$ is $C^{q_1-1}$ in $s$.
It follows that
\begin{align}\label{eq:expld2}
 \frac{\partial}{\partial s} D(u,s)=\frac{\partial}{\partial s}\frac{\partial}{\partial u}\bar p(u,s) =-\bar\psi^*\tau^*-E_1(u,s),
\end{align}
where $|E_1(u,s)|\ll u^{\beta-q_1\gamma-1}$
and  $E_1(u,s)$ is $C^{q_1-1}$ in $s$.\\

 {\bf{Proof of item (ii)}.} 
 We proceed via an `implicit equation'  exploited in~\cite[Proof of Theorem 4.1]{BruTerTod19}
 for the case $\beta<1$ (infinite equilibrium states). 
 The key new ingredient comes down to using the Implicit Function Theorem inside the above mentioned implicit equation.
 
 By (i), $r(u,s):=\frac{\partial}{\partial u}\bar p(u,s)$ is well defined.
 For any small $u_0>0$,
 \begin{align}\label{eq:impl}
  \bar p(u_0,s)-\bar p(0,s)=\int_0^{u_0} r(u,s)\, du=-\tau^* u_0+ \int_0^{u_0} D(u,s)\, du,
  \end{align}
where $D(u,s)$ is as in item (a) after~\eqref{eq:expld1}.

By liftability, for $u_0(s)=p(s)=P_{\F}(\phi+s\psi)$, we obtain $P_T(\overline{\phi+s\psi-u_0})=0$. 
Hence the LHS of~\eqref{eq:impl} is $-P_T(\overline{\phi+s\psi})$.
By assumption, $u_0(s)>0$ for all $s>0$. The continuity of the pressure function gives that
$u_0(s)\to 0$ as $s\to 0$. Thus,~\eqref{eq:impl}
holds, and
\begin{align}\label{eq:impl2}
-\bar p(0,s)=-\tau^* u_0(s)+ \int_0^{u_0(s)} D(u,s)\, du:=-\tau^* u_0(s) +L(u_0(s),s).
  \end{align}
At this point we can conclude that, as $s\to 0$,
\begin{align}\label{eq:p}
p(s)=u_0(s)=\frac{\bar p(0,s)}{\tau^*}+\frac{L(u_0(s),s)}{\tau^*}=\frac{\bar p(0,s)}{\tau^*}(1+o(1))=s \frac{\bar\psi^*}{\tau^*}(1+o(1)).
\end{align}
 The first equality is by definition. The second equality 
follows immediately from~\eqref{eq:impl2}, while in the third we used the smoothness of $D(u,s)$ in $s$
and the fact that $D(u,0)\to 0$ (as in item (a) after~\eqref{eq:expld1}). 
The fourth equality follows  from~\eqref{eq:derpos}, since $E(0,s)$ is $C^{q_1-1}$ in $s$.
 
 We continue with the study of the derivative in $s$ of $u_0(s)$ via~\eqref{eq:impl2}. From here on we write $u_0:=u_0(s)$.
 
 Since $D(u,s)$ is uniformly continuous in $u$ (since it is $C^{q_0-1}$ in $u$), 
$\frac{\partial}{\partial u_0} L(u_0,s)=D(u_0,s)$, for all $s$. Set
\[
 M(u_0,s):=L(u_0,s)+\bar p(0,s),
\]
and note that $\frac{\partial}{\partial u_0} M(u_0,s)=D(u_0,s)\ne 0$, for all $u_0, s$ small enough. 
Since $M(u_0,s) - \tau^* u_0(s) \equiv 0$ and we also know that $|\frac{\partial}{\partial u_0} L(u_0,s)|<\infty$ and $|\frac{\partial}{\partial s} L(u_0,s)|<\infty$ (because $D(u_0,s)$ is $C^q_1$ in $s$), the IFT ensures that
$u_0(s)$ is differentiable in $s$ and
\begin{align}\label{eq:derivp}
 u_0'(s)= \frac{\frac{\partial}{\partial s} M(u_0,s)}{\tau^* - \frac{\partial}{\partial u_0} M(u_0,s)}.
\end{align}
We first estimate the numerator in~\eqref{eq:derivp}. Using~\eqref{eq:derpos},
\begin{align*}
 \frac{\partial}{\partial s} M(u_0,s)=\frac{\partial}{\partial s} L(u_0,s)+\bar\psi^*+E(0,s),
\end{align*}
where $E(0,s)$ is $C^{q_1-1}$ in $s$. Using the definition of $L(u_0,s)$ in~\eqref{eq:impl2} and also recalling~\eqref{eq:expld2},
\begin{align}\label{eq:dersl}
 \left|\frac{\partial}{\partial s} L(u_0,s)\right|&=\left|\int_0^{u_0}\frac{\partial}{\partial s}D(u,s)\, du\right|
 =\left|\int_0^{u_0}\frac{\partial}{\partial s}
 \frac{\partial}{\partial u}\bar p(u,s)\, du\right|\\
 \nonumber&=\left|\bar\psi^*\tau^*u_0+\int_0^{u_0}E_1(u,s)\, du\right|\ll u_0 +u_0^{\beta-q_1\gamma}.
\end{align}
Moreover, using the smoothness properties of $E_1$, we obtain that $\frac{\partial}{\partial s} L(u_0,s)$ is $C^{q_1-1}$ in $s$.
Thus,
\begin{align}\label{eq:m1} \frac{\partial}{\partial s} M(u_0,s) =\bar\psi^*+\hat E(u_0,s),
 \end{align}
 where $\hat E$  is well-defined in $u_0$  and $C^{q_1-1}$
 in $s$.

We continue with estimating the denominator in~\eqref{eq:derivp}. 
Recall that $\frac{\partial}{\partial u_0} M(u_0,s)=D(u_0,s)$, where $D$ is as in item (a) after~\eqref{eq:expld1}. In particular, $D(u_0,s)$ is $C^{q_1}$ in $s$. 
By~\eqref{eq:p}, $u_0(s)=O(s)$. Using the smoothness of $D(u_0,s)$ in $s$, we note that 
\[
\frac{1}{\tau^*-\frac{\partial}{\partial u_0} M(u_0,s)}=\frac{1}{\tau^* - D(u_0,s)}=\frac{1}{\tau^*}\left(1+O(D(u_0,s))\right)^{-1}
= \frac{1+o(1)}{\tau^*} \quad \text{ as } s\to 0.
\]
Recalling the smoothness properties of  $\hat E(u_0,s)$ in~\eqref{eq:m1}, we obtain $p'(0)=\frac{\bar\psi^*}{\tau^*}$.\\

{\bf{Proof of item (iii)}.} 
When $q_1>2$, differentiating in~\eqref{eq:p},
\begin{align*}
 p''(s)=\frac{\frac{\partial^2}{\partial s^2}\bar p(0,s)}{\tau^*}+\frac{\frac{\partial^2}{\partial s^2}L(u_0,s)}{\tau^*}.
\end{align*}

A very lengthy but straightforward\footnote{A refined version of this calculation is covered inside the proof of Proposition~\ref{prop:gmsec}. See in particular,~\eqref{eq:reftoit}, which deals with the case $q_1=\beta/\gamma<2$. The calculations are the same, just the exponent is different: see Remark~\ref{rmk:reftoit}.} calculation based on the smoothness properties of the function $D(u_0,s)$ 
(after differentiating~\eqref{eq:dersl} once more in $s$) shows that
$\frac{\partial^2}{\partial s^2}L(u_0,s)=o(1)$ as $s\to 0$.

Finally, it is known (see~\cite[Theorem 3]{Sar06})
that $\frac{\partial^2}{\partial s^2}\bar p(0,s)\Big|_{s=0}=\bar\sigma^2$, with $\bar\sigma^2$ as defined in~\eqref{eq:sigma}. Thus, $p''(0)=\frac{\bar\sigma^2}{\tau^*}$,
and the conclusion follows from the first equality in~\eqref{eq:sigma}.~\end{pfof}

\section{Refined estimates in the setup of Proposition~\ref{prop:gmsec}}
\label{sec:refined}

We start with a refined version of Lemma~\ref{lemma:sm}. Recall from~\eqref{eq:gh} and~\eqref{eq:kk}
 that $H_{[q_1]}(u,s)v=\frac{\partial}{\partial s^{[q_1]}}R(u,s)v= R(\bar\psi^{[q_1]} e^{-u\tau} e^{s\bar\psi}v)$ and that
$K_{[q_1]}(u,s)v=\frac{\partial}{\partial s^{[q_1]}}\frac{\partial}{\partial u} R(u,s)v=-R(\bar\psi^{[q_1]} \tau e^{-u\tau} e^{s\bar\psi}v)$. 
In Lemma~\ref{lemma:sm} we dealt with the continuity properties of $H$ and $K$ as $u,s\to 0$.
The first result below tells us how the derivatives in $s$ of $H$ and $K$ go to $\infty$ as $u,s\to 0$.

\begin{lemma}\label{lemma:ref} Assume the setup of Proposition~\ref{prop:gmsec}, in particular $\gamma \in (\beta-1,\beta)$.
Let $u, s\in [0, \delta_0)$. 
\begin{itemize}
\item[(i)] If $[q_1]=1$ and $\beta/\gamma\in (1,2]$ then
 $\|H_{1}(u,s)\|_{\cB_\theta} < \infty$ and
  \(
  \|K_{1}(u,0)\|_{\cB_\theta}\le C u^{\beta-\gamma-1},
   \)
   for some $C>0$. 
   
   Furthermore, if $\beta/\gamma\in (1,2)$, there exist $ C_2, C_3, C_4>0$ so that
  \[
  \left\|\frac{\partial}{\partial s}H_{1}(u,s)\right\|_{\cB_\theta}\le C_2 u^{\beta-2\gamma},\quad  \left\|\frac{\partial}{\partial s}K_{1}(u,s)\right\|_{\cB_\theta}\le C_3 u^{\beta-2\gamma-1}.
   \]
   and
   \[
  \left\|\frac{\partial}{\partial s}H_{1}(0,s)\right\|_{\cB_\theta}\le C_4 s^{\beta/\gamma-2}.
   \]   
   
   If $\beta/\gamma=2$, then there exist $C_2, C_3, C_4>0$ so that
   
   \[
  \left\|\frac{\partial}{\partial s}H_{1}(u,s)\right\|_{\cB_\theta}\le C_2 \log(1/u),\quad  \left\|\frac{\partial}{\partial s}K_{1}(u,s)\right\|_{\cB_\theta}\le C_3 u^{-1}.
   \]
   and
   \[
  \left\|\frac{\partial}{\partial s}H_{1}(0,s)\right\|_{\cB_\theta}\le C_4 \log(1/s).
   \]     

\item[(ii)] If $[q_1]=2$ and $\beta/\gamma\in (2,3)$ then
 $\|H_{2}(u,s)\|_{\cB_\theta} < \infty$ and
  \(
  \|K_{2}(u,0)\|_{\cB_\theta}\le C u^{\beta-2\gamma-1},
   \)
   for some $C>0$. Furthermore, there exist $C_2, C_3, C_4>0$ so that
  \[
 \left \|\frac{\partial}{\partial s}H_{2}(u,s)\right\|_{\cB_\theta}\le C_2 u^{\beta-3\gamma},\quad  \left\|\frac{\partial}{\partial s}K_{2}(u,s)\right\|_{\cB_\theta}\le C_3 u^{\beta-3\gamma-1}.
   \] 
   and
   \[
  \left\|\frac{\partial}{\partial s}H_{2}(0,s)\right\|_{\cB_\theta}\le C_4 s^{\beta/\gamma-3}.
   \]    \end{itemize}
\end{lemma}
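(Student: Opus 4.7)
The plan is to follow the template of Lemma~\ref{lemma:sm} but exploit the sharp tail asymptotics $\mu_Y(\tau\ge x)=cx^{-\beta}(1+o(1))$ and the explicit form $\psi_0=C_1\tau^\gamma$ supplied by the hypotheses of Proposition~\ref{prop:gmsec}. The latter gives $|\bar\psi|^k\asymp\tau^{k\gamma}$ at infinity, so the whole lemma will reduce to evaluating a small family of integrals against $\mu_{\bar\phi}$.

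First, I would reduce every $\cB_\theta$-bound to an $L^1(\mu_{\bar\phi})$-estimate via the inequality $\|R(w\cdot v)\|_{\cB_\theta}\le C|w|_{L^1(\mu_{\bar\phi})}|v|_\theta$ from~\eqref{eq:use1}, whose hypothesis is available because both $\tau$ and $\psi_0$ satisfy \eqref{eq:inf}. Differentiating the explicit formulas for $H_{[q_1]}$ and $K_{[q_1]}$ in $s$ yields
\[
 \frac{\partial}{\partial s}H_{[q_1]}(u,s)v=R\!\left(\bar\psi^{[q_1]+1}e^{-u\tau}e^{s\bar\psi}v\right),\qquad
 \frac{\partial}{\partial s}K_{[q_1]}(u,s)v=-R\!\left(\bar\psi^{[q_1]+1}\tau\, e^{-u\tau}e^{s\bar\psi}v\right).
\]
Since $e^{s\bar\psi}=e^{sC_0}e^{-sC_1\tau^\gamma}\le e^{sC_0}$ uniformly on $Y$ for small $s\ge 0$, and $|\bar\psi|^k\ll 1+\tau^{k\gamma}$ by (GM1), all the asserted bounds will follow from the leading asymptotics of the two one-parameter families
\[
 I_q(u):=\int_Y\tau^q e^{-u\tau}\,d\mu_{\bar\phi}\quad (u\to0),\qquad
 J_q(s):=\int_Y\tau^q e^{-sC_1\tau^\gamma}\,d\mu_{\bar\phi}\quad (s\to0),
\]
for $q\in\{\gamma,\gamma+1,2\gamma,2\gamma+1,3\gamma,3\gamma+1\}$.

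The main calculation is the asymptotic of $I_q$ and $J_q$. This is exactly the integration-by-parts argument carried out in \eqref{eq:utau}, but now using the two-sided tail $S(x)=cx^{-\beta}(1+o(1))$. After integration by parts, the substitution $t=ux$ gives $I_q(u)=c(q,\beta)\,u^{\beta-q-1}(1+o(1))$ provided $q>\beta$ (so that the Gamma-type integral $\int_0^\infty t^{q-\beta-1}e^{-t}\,dt$ converges), while the critical case $q=\beta$ produces a $\log(1/u)$ from $\int_u^\infty t^{-1}e^{-t}\,dt$. Analogously, the substitution $t=sC_1\tau^\gamma$ yields $J_q(s)\sim c'(q,\beta,\gamma)\,s^{(\beta-q)/\gamma}$ for $q>\beta$, with a $\log(1/s)$ in the critical case $q=\beta$.

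Now I would plug in the relevant exponents. For $[q_1]=1$: finiteness of $\|H_1(u,s)\|_{\cB_\theta}$ uses $I_\gamma(u)$ with $\gamma<\beta$, which stays bounded; $\|K_1(u,0)\|_{\cB_\theta}$ uses $I_{\gamma+1}(u)\sim u^{\beta-\gamma-1}$ since $\gamma+1>\beta$ by (GM1); $\|\partial_s H_1(u,s)\|_{\cB_\theta}$ uses $I_{2\gamma}(u)$, giving $u^{\beta-2\gamma}$ when $\beta/\gamma<2$ and $\log(1/u)$ when $\beta/\gamma=2$; $\|\partial_s K_1(u,s)\|_{\cB_\theta}$ uses $I_{2\gamma+1}(u)\sim u^{\beta-2\gamma-1}$ (which becomes $u^{-1}$ at the boundary); and the $u=0$ bound on $\partial_s H_1(0,s)$ uses $J_{2\gamma}(s)\sim s^{\beta/\gamma-2}$, with the expected $\log(1/s)$ replacement at $\beta/\gamma=2$. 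For $[q_1]=2$ with $\beta/\gamma\in(2,3)$ the reasoning is identical with $q$ shifted by $\gamma$: $K_2(u,0)$ uses $I_{2\gamma+1}(u)$, $\partial_s H_2$ uses $I_{3\gamma}(u)$, $\partial_s K_2$ uses $I_{3\gamma+1}(u)$, and $\partial_s H_2(0,s)$ uses $J_{3\gamma}(s)\sim s^{\beta/\gamma-3}$. In each case $q>\beta$ holds because $\beta<3\gamma$ and $\gamma<1$, so the power-law regime applies.

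The main obstacle is the critical case $\beta/\gamma=2$, where $2\gamma=\beta$ sits exactly on the integrability boundary and the Gamma-type substitution must be replaced by the direct comparison $\int_u^\infty t^{-1}e^{-t}\,dt\sim\log(1/u)$ (and similarly for $J_q$). Controlling the error from the $(1+o(1))$ in the tail is also routine but somewhat tedious: it is handled by splitting $S(x)=cx^{-\beta}+(S(x)-cx^{-\beta})$ and observing that the second piece contributes lower-order asymptotics than the first when inserted into the integration-by-parts identity. Everything else is a direct generalisation of the $L^1$ computation \eqref{eq:utau} already carried out in the proof of Lemma~\ref{lemma:sm}.
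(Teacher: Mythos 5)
Your proposal follows essentially the same route the paper takes: reduce the $\cB_\theta$-bounds to $L^1(\mu_{\bar\phi})$-integrals via \eqref{eq:use1}, and then compute these by integration by parts against the regularly varying tail $1-S(x)=cx^{-\beta}(1+o(1))$ and a scaling substitution, exactly as the paper does in Section~\ref{sec:integr} (equations~\eqref{eq:utauprec}--\eqref{eq:utaus}). One small slip to flag: your stated generic asymptotic $I_q(u)\sim u^{\beta-q-1}$ should read $I_q(u)\sim u^{\beta-q}$ (as all of your individual applications in fact correctly use, e.g.\ $I_{\gamma+1}(u)\sim u^{\beta-\gamma-1}$, $I_{2\gamma}(u)\sim u^{\beta-2\gamma}$), so this is only a typo and does not affect the argument.
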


\subsection{Some general type of integrals}
\label{sec:integr}

Before proving Lemma~\ref{lemma:ref}, we provide estimates of some general types of integrals. These or variants of them will be used throughout the proofs of the technical results in this section. Let $S(x)=\mu_{\overline{\phi}}(\tau<x)$ and recall from (GM1) that $\gamma>\beta-1$, so $\beta-\gamma<1$.
Since $1-S(x)=c x^{-\beta}(1+o(1))$,
\begin{align}\label{eq:utauprec}
 \nonumber \int_Y \tau^{\gamma+1} e^{-u\tau}\, d\mu_{\overline{\phi}} &=- \int_0^{\infty} x^{\gamma+1} e^{-ux}\, d(1-S(x))\\
 \nonumber &=(\gamma+1)\int_0^\infty x^{\gamma}(1-S(x)) e^{-ux}\, dx - u\int_0^\infty x^{\gamma+1}(1-S(x)) e^{-ux}\, d x 
 \\
 \nonumber &=c(\gamma+1)(1+o(1))\int_0^\infty e^{-ux} x^{-\beta+\gamma}\, dx - u(1+o(1)\, c\int_0^\infty e^{-ux} x^{\gamma+1-\beta}\, d x\\
 \nonumber &= c u^{\beta-\gamma-1}(1+o(1))
 \left((\gamma+1) \int_0^\infty e^{-t}t^{-\beta+\gamma} \, d t - \int_0^\infty e^{-t}t^{-\beta+\gamma+1}, d t\right)\\
 & =C u^{\beta-\gamma-1}(1+o(1)),
 \end{align}
 for a positive $C$ depending only on $c,\beta,\gamma$.
 
 By a similar argument, if $\beta/\gamma\ne 2$, then
 \begin{equation}\label{eq:utauneq2}
\begin{cases}  \int_Y \tau^{2\gamma} e^{-u\tau}\, d\mu_{\overline{\phi}}= Cu^{\beta-2\gamma}(1+o(1)),\\[2mm]
 \int_Y \tau^{2\gamma+1} e^{-u\tau}\, d\mu_{\overline{\phi}}= C'u^{\beta-2\gamma-1}(1+o(1))
 \end{cases}
 \end{equation}
for some $C, C' > 0$, whereas if $\beta/\gamma=2$ then
\begin{equation}\label{eq:utaueq2}
 \begin{cases}
 \int_Y \tau^{2\gamma} e^{-u\tau}\, d\mu_{\overline{\phi}}=  \int_Y \tau^{\beta} e^{-u\tau}\, d\mu_{\overline{\phi}}=C\log(1/u)(1+o(1)),\\[2mm]
 \int_Y \tau^{2\gamma+1} e^{-u\tau}\, d\mu_{\overline{\phi}}=  \int_Y \tau^{\beta+1} e^{-u\tau}\, d\mu_{\overline{\phi}}=Cu^{-1}(1+o(1)).
 \end{cases}
\end{equation}
 Recall that $\bar\psi=C_0-\psi_0=C_0-C_1\tau^\gamma$.
 Similar calculations, this time with $S(x)=\mu_{\overline{\phi}}(\psi_0<x)=\mu_{\overline{\phi}}(C_1\tau^\gamma<x)$, show that
 if $\beta/\gamma<2$, $\int_Y \psi_0^{2} e^{-s\psi_0}\, d\mu_{\overline{\phi}} =Cs^{\beta/\gamma-2}(1+o(1))$ for some $C>0$
 and that if $\beta/\gamma\in (2,3)$, $\int_Y \psi_0^{3} e^{-s\psi_0}\, d\mu_{\overline{\phi}} =-Cs^{\beta/\gamma-3}(1+o(1))$
 for some $C>0$. The involved constant depend only on $c, \beta, \gamma$.
 If $\beta/\gamma=2$ then
 \(
 \int_Y \psi_0^{2} e^{-s\psi_0}\, d\mu_{\overline{\phi}} =C\log(1/s)(1+o(1)).
  \)
  The involved constants (denoted by $C$ here) depend only on $c, \beta, \gamma$.

 Next, note that $\bar\psi^2=C_0^2+\psi_0^2-2C_0\psi_0$ and that $\bar\psi^3=C_0^3-\psi_0^3+3C_0^2\psi_0-3C_0\psi_0^2$. Thus,
 there exist $C_2, C_3, C_4$ depending only on $c, \beta, \gamma$ so that
  \begin{equation}\label{eq:utaus}
\begin{cases}
 \int_Y \bar\psi^{2} e^{s\bar\psi}\, d\mu_{\overline{\phi}} =C_2s^{\beta/\gamma-2}(1+o(1)), & \text{ if } \beta/\gamma<2,\\[2mm]
 \int_Y \bar\psi^{2} e^{s\bar\psi}\, d\mu_{\overline{\phi}} =C_3\log(1/s)(1+o(1)), & \text{ if } \beta/\gamma=2,\\[2mm]
\int_Y \bar\psi^{3} e^{s\bar\psi}\, d\mu_{\overline{\phi}} =-C_4s^{\beta/\gamma-3}(1+o(1)) & \text{ if } \beta/\gamma \in (2,3).\\
\end{cases}
\end{equation}

 \begin{pfof}{Lemma~\ref{lemma:ref}} We provide the argument for item (i). Item (ii) follows by a similar argument after taking one more
derivative in $s$.

The first estimate on $H_1$ follows directly from Lemma~\ref{lemma:sm} with $[q_1]=1$.

Next, note that if $\beta/\gamma\in (1,2)$,
\[
\left\|\frac{\partial}{\partial s}H_{1}(u,s)\right\|_{\cB_\theta}\ll \| R(\bar\psi^{2} e^{-u\tau})\|_{\cB_\theta}\ll |\tau^{2\gamma}e^{-u\tau}|_{L^{1}(\mu_{\overline{\phi}})}\ll u^{\beta-2\gamma},
\]
where we used the first equation in~\eqref{eq:utauneq2}. The estimate for the case $\beta/\gamma=2$ follows similarly using~\eqref{eq:utaueq2}.
Also,  if $\beta/\gamma\in (1,2)$,
\[
\left\|\frac{\partial}{\partial s}H_{1}(0,s)\right\|_{\cB_\theta}\ll \| R(\bar\psi^{2} e^{s\bar\psi})\|_{\cB_\theta}\ll |\bar\psi^{2} e^{s\bar\psi}|_{L^{1}(\mu_{\overline{\phi}})}\ll 
s^{\beta/\gamma-2},
\]
where we have used the first estimate of~\eqref{eq:utaus} for $s$. The estimate for the case $\beta/\gamma=2$ follows similarly using the corresponding estimate of~\eqref{eq:utaus} for this case.

Regarding $K_1$,  if $\beta/\gamma\in (1,2)$,
\[
\left\|\frac{\partial}{\partial s}K_{1}(u,s)\right\|_{\cB_\theta}\ll \| R(\bar\psi^{2} \tau e^{-u\tau})\|_{\cB_\theta}\ll |\tau^{2\gamma+1}e^{-u\tau}|_{L^{1}(\mu_{\overline{\phi}})}\ll u^{\beta-2\gamma-1},
\]
where we used the second equation in~\eqref{eq:utauneq2}. The estimate for the case $\beta/\gamma=2$ follows similarly using the corresponding estimates for this case.~\end{pfof}

We shall also need the following refined version of Corollary~\ref{cor:smev} (ii) and (iii).
Item (i) of Corollary~\ref{cor:smev} remains unchanged.
Again, the derivatives in $s$ of several quantities in the lemma below blow up as $u,s\to 0$ but in a controlled way.

We recall that in the setup of Proposition~\ref{prop:gmsec}, $\gamma<1$ and $\beta<2$.

\begin{lemma}\label{lemm:refev}
The following hold in the setup of Proposition~\ref{prop:gmsec}.  Let $u, s\in [0,\delta_0)$.
\begin{itemize}
 \item[(i)] $\frac{\partial}{\partial u}\lambda(u,s)=-\tau^*+d(u,s)$, where $d(u,s)$ is as follows.
 
 There exists $C>0$ depending only on $c,\beta$ so that $d(u,0)=
 Cu^{\beta-1}(1+o(1))$.  
 Moreover, there exist $C_2, C_3>0$ depending only on $c,\beta, \gamma$ so that
 as $u,s\to 0$,
 \begin{align*}
 \frac{\partial}{\partial s} d(u,s)=
 C_2 u^{\beta-\gamma-1}(1+o(1)), &\text{ if }\beta/\gamma\in (1,2],
   \end{align*}
 
  \begin{align*}
 \frac{\partial^2}{\partial s^2} d(u,s)=
 C_3 u^{\beta-\gamma-2}(1+o(1)), &\text{ if }\beta/\gamma\in (2,3).
  \end{align*}

 \item[(ii)] The following holds for some $C, C'>0$ depending only on $c,\beta/\gamma$.
  \begin{align*}
\frac{\partial}{\partial s}\lambda(u,s)=\bar\psi^*+ e(u,s) + \begin{cases}
h(s)+ h_0(s), &\text{ if }\beta/\gamma\in (1,2],\\
 -s\int_Y \bar\psi^2\, d\mu_{\overline{\phi}}+ Cs^{\beta/\gamma-1}+h_1(s), &\text{ if }\beta/\gamma\in (2,3),
 \end{cases}
  \end{align*} 
  where $h(s)=Cs^{\beta/\gamma-1}$ if  $\beta/\gamma\in (1,2)$, $h(s)=C\log(1/s)$ if  $\beta/\gamma=2$
  and where
    $h_0$, $h_1$  and $e$ are as follows: 
 \begin{itemize}
 \item[(a)] $h_0(s)=o(s^{\beta/\gamma-1})$, $h'_0(s)=o(s^{\beta/\gamma-2})$ if $\beta/\gamma\in (1,2)$
 and $h'_0(s)=o(\log(1/s))$ if $\beta/\gamma=2$.
 
 \item[(b)] $h_1(s)=o(s^{\beta/\gamma-1})$, $h'_1(s)=C' s^{\beta/\gamma-2}(1+o(1))$
  and $h''_1(s)=C' s^{\beta/\gamma-3}(1+o(1))$;
  
 \item[(c)] $e(0,s)=O(s)$, $e(u,0)=o(1)$ as $u,s\to 0$ and

  \begin{itemize}
  \item[(*)] If $\beta/\gamma\in (1,2)$, then $\frac{\partial}{\partial s} e(u,s)= o(u^{\beta-\gamma-1})+o(s^{\beta/\gamma-2})$.
  Also, $\frac{\partial}{\partial s} e(0,s)=o(s^{\beta/\gamma-2})$.
   
   \item[(**)] If $\beta/\gamma=2$, then $\frac{\partial}{\partial s} e(u,s)= o(u^{\beta-\gamma-1})+o(\log(1/s))$. Also, $\frac{\partial}{\partial s} e(0,s)=o(\log(1/s))$.
    
   \item[(***)] If $\beta/\gamma\in (2,3)$, then $\frac{\partial}{\partial s} e(u,s)= o(u^{\beta-\gamma-2})+o(s^{\beta/\gamma-3})$.   Also, $\frac{\partial}{\partial s} e(0,s)= o(s^{\beta/\gamma-3})$.    \end{itemize}
    \end{itemize}

  \item[(iii)] Let
  $\kappa(u,s)=\frac{\partial}{\partial s}\frac{\partial}{\partial u}\lambda(u,s)$.
  Then there exist $C, C'>0$ depending only on $c,\beta,\gamma$,
  so that
  $$
  \begin{cases}
  \kappa(u,0)=C u^{\beta-\gamma-1} + O(u^{\beta-\gamma-1+\eps_0}), & \text{ if } \beta/\gamma\in (1,2], \\
  \frac{\partial}{\partial s}\kappa(u,s)\Big|_{s=0}=C' u^{\beta-2\gamma-1} + O(u^{\beta-2\gamma-1+\eps_0}), & \text{  if } \beta/\gamma\in (2,3),
  \end{cases}
  $$
   as $u\to 0$ and for any $\eps_0>0$.

   Also, the following hold for some $\hat C_2, \hat C_3>0$ depending only on $c,\beta,\gamma$, as $u,s\to 0$.
   \begin{itemize}
    \item[(*)] If $\beta/\gamma\in (1,2]$, then $\frac{\partial}{\partial s}\kappa(u,s)=\hat C_2 u^{\beta-2\gamma-1}(1+o(1))$.
    \item[(**)] If $\beta/\gamma\in (2,3)$, then $\frac{\partial^2}{\partial s^2}\kappa(u,s)=-\hat C_3 u^{\beta-3\gamma-1}(1+o(1))$.    \end{itemize}
    \end{itemize}
\end{lemma}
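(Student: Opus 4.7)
The strategy is to refine the argument used for Corollary~\ref{cor:smev} by extracting the leading-order asymptotics of each quantity rather than mere continuity bounds. The starting point is the identity
\[
\lambda(u,s)=\int_Y e^{-u\tau}e^{s\bar\psi}\, d\mu_{\overline{\phi}}+V(u,s),\qquad V(u,s)=\int_Y(1-e^{-u\tau}e^{s\bar\psi})(1-v(u,s))\, d\mu_{\overline{\phi}},
\]
derived in the proof of that corollary using $v(0,0)\equiv 1$. Every quantity in items (i)--(iii) is a partial derivative of $\lambda$ in $u$ and/or $s$, and the product rule splits each such derivative into a \emph{main} contribution coming from the first integral and a \emph{correction} coming from $V$ and its derivatives. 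The main contributions all have the form $\int_Y\tau^i\bar\psi^j e^{-u\tau}e^{s\bar\psi}\, d\mu_{\overline{\phi}}$ (possibly after subtracting the limiting constant), and their asymptotics follow from the integral estimates of Section~\ref{sec:integr} once one uses $\bar\psi=C_0-C_1\tau^\gamma$ to expand each $\bar\psi^j$ as a polynomial in $\tau^\gamma$.

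For item (i), differentiating once in $u$ at $s=0$ and applying an integration by parts exactly as in~\eqref{eq:utauprec} (with exponent $1$ in place of $\gamma+1$) gives the leading term $d(u,0)=Cu^{\beta-1}(1+o(1))$. Each further $s$-differentiation brings down a factor of $\bar\psi$, and applying~\eqref{eq:utauprec}--\eqref{eq:utauneq2} to the dominant power of $\tau$ appearing in $\bar\psi^k$ produces the stated leading blow-ups. For item (ii), at $u=0$ one writes
\[
\partial_s\lambda(0,s)=\bar\psi^*+\int_0^s\!\int_Y\bar\psi^2 e^{t\bar\psi}\, d\mu_{\overline{\phi}}\, dt+\partial_s V(0,s),
\]
substitutes the asymptotics~\eqref{eq:utaus} for the inner integral, integrates in $t$, and collects the leading $s$-term into $h(s)$ and the remainder into $h_0$ or $h_1$; the joint error $e(u,s)$ absorbs $\int_Y\bar\psi(e^{-u\tau}-1)e^{s\bar\psi}\, d\mu_{\overline{\phi}}$ together with the full $V$-contribution, and the claimed bounds on $\partial_s e$ follow by direct differentiation using~\eqref{eq:utauprec}--\eqref{eq:utaus}. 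Item (iii) is handled by the same scheme applied to $\partial_u\partial_s\lambda$ and its further $s$-derivatives, with the leading singularities of the form $u^{\beta-j\gamma-1}$ arising from $\int_Y\tau^{j\gamma+1}e^{-u\tau}\, d\mu_{\overline{\phi}}$ for $j=1,2,3$.

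The additive constants contributed by the $C_0$ component of $\bar\psi$ and the bounded remainders of the form $\int_Y\tau^i e^{-u\tau}\, d\mu_{\overline{\phi}}-\textrm{const}$ are absorbed into the $O(u^{\beta-\gamma-1+\eps_0})$ error in item (iii) because the assumption $\gamma>\beta-1$ forces $\beta-\gamma-1<0$ and leaves room for a small $\eps_0>0$. Logarithmic factors arising at the borderline $\beta/\gamma=2$ through~\eqref{eq:utaueq2} and the corresponding line of~\eqref{eq:utaus} propagate additively through the $t$-integration and do not alter the structure of the argument.

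The main obstacle will be to show in each case that the $V$-corrections are of strictly lower order than the leading singular terms identified above. This uses standard spectral-gap perturbation formulae which express the partial derivatives of $v(u,s)$ as $(R(u,s)-\lambda(u,s))^{-1}$ applied, on the complement of the leading eigenspace, to the corresponding partial derivatives of $R(u,s)$. Plugging the norm bounds on $G_{[q_0]}$, $H_{[q_1]}$, $K_{[q_1]}$, $\partial_s H_{[q_1]}$ and $\partial_s K_{[q_1]}$ supplied by Lemmas~\ref{lemma:sm} and~\ref{lemma:ref} into the various mixed partial derivatives of $V$, expanded via the product rule, yields after routine bookkeeping remainders of exactly the required order.
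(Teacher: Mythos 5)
Your proposal is correct and follows essentially the same route as the paper: decompose $\lambda$ (and its $u$- and $s$-derivatives) via the eigenvalue identity into a main moment-integral part and a $V$-correction, extract the singular asymptotics of the main part from the integral estimates of Section~\ref{sec:integr} after expanding powers of $\bar\psi = C_0 - C_1\tau^\gamma$, and control the $V$-correction using the operator-derivative bounds of Lemmas~\ref{lemma:sm} and~\ref{lemma:ref} applied through the standard spectral perturbation formulae for $v(u,s)$. The paper carries this out by explicitly labelling and estimating each resulting term ($W_0,W_1,W_2$, $Z_0,Z_1,Z_2$, $\kappa_1,\dots,\kappa_7$), whereas you compress that bookkeeping into a remark, but the underlying argument is the same.
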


\begin{pfof}{Lemma~\ref{lemm:refev}}
We continue from the proof of Corollary~\ref{cor:smev} (ii) with the same notation.\\[2mm]
\textbf{Proof of item (i)}
Recall that 
\begin{align}\label{derivu}
\nonumber  -\frac{\partial}{\partial u}\lambda(u,s)&=\int_Y\tau\, d\mu_{\overline{\phi}}
  -\int_Y \tau(1-e^{-u\tau})\, d\mu_{\overline{\phi}}  -\int_Y\tau e^{-u\tau}(1-e^{s\bar\psi})\, d\mu_{\overline{\phi}}
 \\
 \nonumber &-\int_Y\tau e^{-u\tau}e^{s\bar\psi}(v(0,0)-v(u,s))\, d\mu_{\overline{\phi}}+\int_Y (1-e^{-u\tau}e^{s\bar\psi})\, \frac{\partial}{\partial u}v(u,s)\, d\mu_{\overline{\phi}}\\
 &:=\int_Y\tau\, d\mu_{\overline{\phi}}
  -\int_Y \tau(1-e^{-u\tau})\, d\mu_{\overline{\phi}} -W_0(u,s) -W_1(u,s)-W_2(u,s).\end{align}
 Recall  $\mu_{\overline\phi}(\tau\ge x)=c x^{-\beta}(1+o(1))$. A standard calculation (mostly similar to the one used  in obtaining~\eqref{eq:utauprec}) shows that
 there exists $C>0$ depending on $c$ and $\beta$ so that
 \begin{align*}
- \int_Y \tau(1-e^{-u\tau})\, d\mu_{\overline{\phi}}=
 Cu^{\beta-1}(1+o(1)).
  \end{align*}
    Set $d(u,s)= \int_Y \tau(1-e^{-u\tau})\, d\mu_{\overline{\phi}} -W_0(u,s) -W_1(u,s)-W_2(u,s)$ with $W_0, W_1, W_2$ as defined in~\eqref{derivu}.
  Note that $W_0(u,0)=0$, $|W_1(u,0)|\ll u$ and $|W_2(u,0)|\ll u$ and that so far we obtained the expression for $d(u,0)$.
  
  Note that $\frac{\partial}{\partial s}d(u,s)=-\frac{\partial}{\partial s}  (W_0(u,s) +W_1(u,s)+W_2(u,s))$. 
  We continue with the derivatives in $s$ of $W_0, W_1, W_2$ by considering each of the two cases. \\

  \textbf{The term  $\bm{W_0(u,s)}$.}
  First, $\frac{\partial}{\partial s}W_0(u,s) =\int_Y\tau \bar\psi e^{-u\tau}e^{s\bar\psi}\, d\mu_{\overline{\phi}}
  =\int_Y\tau \bar\psi e^{-u\tau}\, d\mu_{\overline{\phi}}+ \int_Y\tau \bar\psi e^{-u\tau} (e^{s\bar\psi}-1)\, d\mu_{\overline{\phi}}$.
  
  \textbf{ If $\bm{\beta/\gamma\in (1,2]}$},  then $\beta-\gamma\in (0,1)$. 
  Since $\bar\psi=C_0-C_1\tau^\gamma$,
 \begin{align}\label{us11}
\int_Y\tau \bar\psi e^{-u\tau}\, d\mu_{\overline{\phi}}&=C_0\int_Y\tau e^{-u\tau}\, d\mu_{\overline{\phi}} -C_1\int_Y \tau^{\gamma+1} e^{-u\tau}\, d\mu_{\overline{\phi}} \nonumber \\
&=-C u^{\beta-\gamma-1}(1+o(1)),
\end{align} 
for some $C>0$ depending on $c$ and $\beta, \gamma$. In the last equality we have used that~\eqref{eq:utauprec}  holds as soon as $\beta-\gamma\in (0,1)$.
Since we also know that  that $e^{s\bar\psi}-1\to 0$ as $s\to 0$,  the Dominated Convergence Theorem implies that $\int_Y\tau \bar\psi e^{-u\tau} (e^{s\bar\psi}-1)\, d\mu_{\overline{\phi}}=o(u^{\beta-\gamma-1})$.
So, if $\beta/\gamma\in (1,2]$ then  $\frac{\partial}{\partial s}W_0(u,s) =-C u^{\beta-\gamma-1}(1+o(1))$.

  \textbf{ If $\bm{\beta/\gamma\in (2,3)}$},  then $\beta-2\gamma<\gamma<1$ and $\beta-2\gamma\in (0,\gamma)\subset(0,1)$.
  Note that $\frac{\partial^2}{\partial s^2}W_0(u,s) =\int_Y\tau \bar\psi^2 e^{-u\tau}e^{s\bar\psi}\, d\mu_{\overline{\phi}}$.
Proceeding similarly to the argument above in the case  $\beta/\gamma\in (1,2]$, we compute that if $\beta-2\gamma\in (0,1)$,
then 
$\frac{\partial^2}{\partial s^2}W_0(u,s)=C u^{\beta-2\gamma-1}(1+o(1))$
for some $C$ depending on $c$ and $\beta, \gamma$, where we use an analogue of~\eqref{eq:utauprec} for the case $\beta-2\gamma\in (0,1)$. 
So, if $\beta/\gamma\in (2,3)$ then  $\frac{\partial}{\partial s}W_0(u,s) =-C u^{\beta-2\gamma-1}(1+o(1))$.\\

\textbf{The term  $\bm{W_1(u,s)}$.}
Start from
\begin{align*}
\frac{\partial}{\partial s}W_1(u,s)=\int_Y\tau\bar\psi e^{-u\tau}e^{s\bar\psi}(v(u,0)-v(u,s))\, d\mu_{\overline{\phi}}-\int_Y\tau e^{-u\tau}e^{s\bar\psi}\, \frac{\partial}{\partial s}v(u,s)\, d\mu_{\overline{\phi}}.
\end{align*}

 Recall that \textbf{if $\bm{\beta/\gamma\in (1,2]}$},  then 
 $\beta-\gamma\in (0,1)$. Since
 \[
  \|v(0,0)-v(u,s)\|_{\cB_\theta}\le  \|v(0,s)-v(u,s)\|_{\cB_\theta} +\|v(u,0)-v(u,s)\|_{\cB_\theta}\le u+s,
\]
 using~\eqref{us11}, we obtain $\int_Y\tau\bar\psi e^{-u\tau}e^{s\bar\psi}(v(u,0)-v(u,s))\, d\mu_{\overline{\phi}}=o(u^{\beta-\gamma-1})$, as $u,s\to 0$.
 Also, by Lemma~\ref{lemma:ref}(i) (statement on $H_1$), $\|\frac{\partial}{\partial s}v(u,s)\|_{\cB_\theta}<\infty$.
 Recall $e^{s\bar\psi}\ll e^{sC_0}e^{-s\tau^\gamma}$. Thus,
 \[
  \left|\int_Y\tau e^{-u\tau}e^{s\bar\psi}\, \frac{\partial}{\partial s}v(u,s))\, d\mu_{\overline{\phi}}\right|\ll \int_Y\tau e^{-u\tau}e^{s\bar\psi}\, d\mu_{\overline{\phi}}\ll \int_Y\tau\, d\mu_{\overline{\phi}} =O(1).
  \]
 Thus, if $\beta/\gamma\in (1,2]$, $\frac{\partial}{\partial s}W_1(u,s)= o(u^{\beta-\gamma-1})$, as $u,s\to 0$.

  Next, recall that \textbf{if $\bm{\beta/\gamma\in (2,3)}$},  then 
 $\beta-2\gamma\in (0,1)$.  In this case, taking one more derivative, 
 \begin{align*}
 \frac{\partial^2}{\partial s^2}W_1(u,s)& =\int_Y\tau\bar\psi^2 e^{-u\tau}e^{s\bar\psi}(v(u,0)-v(u,s))\, d\mu_{\overline{\phi}}
-\int_Y\tau\bar\psi e^{-u\tau}e^{s\bar\psi}\, \frac{\partial}{\partial s}v(u,s)\, d\mu_{\overline{\phi}}\\
&\quad-\int_Y\tau e^{-u\tau}e^{s\bar\psi}\, \frac{\partial^2}{\partial s^2}v(u,s)\, d\mu_{\overline{\phi}}=:I_1+I_2+I_3\end{align*}

Using the analogue of~\eqref{us11}  for the case $\beta-2\gamma\in (0,1)$,
\[
|I_1|\ll \|v(0,0)-v(u,s)\|_{\cB_\theta} \int_Y\tau\bar\psi^2 e^{-u\tau}\, d\mu_{\overline{\phi}} \ll u^{\beta-2\gamma-1} (u+s)=o(u^{\beta-2\gamma-1})
\]
 as $u,s\to 0$.

Next, we already know that  $\|\frac{\partial}{\partial s}v(u,s)\|_{\cB_\theta}<\infty$. Thus, $|I_2|\ll \int_Y\tau^{\gamma+1} e^{-u\tau}e^{s\bar\psi}\, d\mu_{\overline{\phi}}\ll u^{\beta-\gamma-1}$. Also, by  Lemma~\ref{lemma:ref}(ii) (the statement on $H_2$), $\|\frac{\partial^2}{\partial s^2}v(u,s))\|_{\cB_\theta}<\infty$ and thus,
$|I_3|\ll \int_Y\tau e^{-u\tau}e^{s\bar\psi}\, d\mu_{\overline{\phi}}=O(1)$. Thus, if $\beta/\gamma\in (2,3)$, then $\frac{\partial}{\partial s}W_1(u,s)= o(u^{\beta-2\gamma-1})$, as $u,s\to 0$.\\

\textbf{ The term  $\bm{W_2(u,s)}$.}

Note that  
\begin{align*}
\frac{\partial}{\partial s}W_2(u,s)=-\int_Y \bar\psi e^{-u\tau}e^{s\bar\psi}\, \frac{\partial}{\partial u}v(u,s)\, d\mu_{\overline{\phi}}
+\int_Y (1-e^{-u\tau}e^{s\bar\psi})\, \frac{\partial^2}{ \partial s\,\partial u}v(u,s)\, d\mu_{\overline{\phi}}.
\end{align*}

\textbf{If $\bm{\beta/\gamma\in (1,2]}$},  by Lemma~\ref{lemma:sm} (statement on $G_{[q_0]}$ with $[q_0]=1$),
$\|\frac{\partial}{\partial u}v(u,s)\|_{\cB_\theta}<\infty$.
Recall $e^{s\bar\psi}\ll e^{sC_0}e^{-s\tau^\gamma}$.
Thus, $\left|\int_Y \bar\psi e^{-u\tau}e^{s\bar\psi}\, \frac{\partial}{\partial u}v(u,s)\, d\mu_{\overline{\phi}}\right|\ll \int_Y \bar\psi\, \frac{\partial}{\partial u}v(u,s)\, d\mu_{\overline{\phi}}=O(1)$.
By Lemma~\ref{lemma:ref}(i) (statement on $K_1$), $\|\frac{\partial^2}{\partial s\, \partial u}v(u,s)\|_{\cB_\theta}\ll u^{\beta-\gamma-1}$.
So, $\left|\int_Y (1-e^{-u\tau}e^{s\bar\psi})\, \frac{\partial^2}{ \partial s\,\partial u} v(u,s)\, d\mu_{\overline{\phi}}\right|\ll u^{\beta-\gamma-1}\int_Y (1- e^{-u\tau}e^{s\bar\psi})\, d\mu_{\overline{\phi}}=o(u^{\beta-\gamma-1})$, as $u,s\to 0$.
Thus, if $\beta/\gamma\in (1,2]$, $\frac{\partial}{\partial s}W_2(u,s)= o(u^{\beta-\gamma-1})$, as $u,s\to 0$.

\textbf{If $\bm{\beta/\gamma\in (2,3)}$}, then we differentiate once more.

\begin{align*}
\frac{\partial^2}{\partial s^2}W_2(u,s)& =-\int_Y \bar\psi^2 e^{-u\tau}e^{s\bar\psi}\, \frac{\partial}{\partial u}v(u,s)\, d\mu_{\overline{\phi}}
-\int_Y \bar\psi e^{-u\tau}e^{s\bar\psi}\, \frac{\partial^2}{\partial s\partial u}v(u,s)\, d\mu_{\overline{\phi}}\\
&\quad-\int_Y \bar\psi e^{-u\tau}e^{s\bar\psi}\, \frac{\partial^2}{\partial u\, \partial s}v(u,s)\, d\mu_{\overline{\phi}}+\int_Y (1-e^{-u\tau}e^{s\bar\psi})\, \frac{\partial^3}{ \partial s^2\,\partial u}v(u,s)\, d\mu_{\overline{\phi}}\\
&=:I_1+I_2+I_3+I_4.
\end{align*}

Since $\bar\psi\in L^2$ for $\beta/\gamma\in (2,3)$, and since $\|\frac{\partial}{\partial u}v(u,s)\|_{\cB_\theta}<\infty$, $|I_1|= O(1)$.
Also, it is easy to see that  $|I_2|= O(1)$ and  $|I_3|= O(1)$.
For $I_4$, we note that by Lemma~\ref{lemma:ref}(ii) (statement on $K_2$), $\|\frac{\partial^3}{ \partial s^2\,\partial u}v(u,s)\|_{\cB_\theta}\ll u^{\beta-2\gamma-1}$.
Thus, $|I_4|=o(u^{\beta-2\gamma-1})$, as $u,s\to 0$. So, if $\beta/\gamma\in (2,3)$, then $\frac{\partial^2}{\partial s^2}W_2(u,s)= o(u^{\beta-2\gamma-1})$, as $u,s\to 0$.

The statements on $\frac{\partial}{\partial s} d(u,s)$ for $\beta/\gamma\in (1,2]$ and on $\frac{\partial^2}{\partial s^2} d(u,s)$ for $\beta/\gamma\in (2,3)$  follow by putting all the above estimates on $W_0, W_1, W_2$ together.\\

\textbf{Proof of item (ii)}.
Recalling~\eqref{evnb} and differentiating in $s$,
\[
 \frac{\partial}{\partial s}\lambda(u,s) =\int_Y\bar\psi\, d\mu_{\overline{\phi}} + \int_Y \bar\psi(e^{s\bar\psi}-1)\, d\mu_{\overline{\phi}}  + e(u,s),
\]
for
\begin{align}\label{derivs}
e(u,s) & = \int_Y \bar\psi e^{s\bar\psi}(1- e^{-u\tau})\, d\mu_{\overline{\phi}} 
\nonumber+\int_Y\bar\psi e^{-u\tau}e^{s\bar\psi}(v(0,0)-v(u,s))\, d\mu_{\overline{\phi}} \nonumber \\
 \nonumber &\quad +\int_Y (1-e^{-u\tau}e^{s\bar\psi})\, \frac{\partial}{\partial s}v(u,s)\, d\mu_{\overline{\phi}}\\
  &=:Z_0(u,s)+Z_1(u,s)+Z_2(u,s).
 \end{align}
 
 A standard calculation  (already used in showing \eqref{eq:utauprec})
 shows that,
 given that $\bar\psi=C_0-C_1\tau^\gamma$ and that  $\mu_Y(\tau\ge x)=c x^{-\beta}(1+o(1))$,
  there exists $C, C'>0$ depending on $c$ and $\beta/\gamma$ so that
 \begin{align}
\int_Y \bar\psi(e^{s\bar\psi}-1)\, d\mu_{\overline{\phi}} =
 \begin{cases}\label{us111}
 h(s)+ h_0(s), &\text{ if }\beta/\gamma\in (1,2],\\
 -s\int_Y \bar\psi^2\, d\mu_{\overline{\phi}}+ Cs^{\beta/\gamma-1}+h_1(s), &\text{ if }\beta\in (2,3),
 \end{cases}
  \end{align} 
  where $h(s)=Cs^{\beta/\gamma-1}$ if  $\beta/\gamma\in (1,2)$, $h(s)=C\log(1/s)$ if  $\beta/\gamma=2$
  and  where $h_0$ and $h_1$ are as follows: (a) $h_0(s)=o(s^{\beta/\gamma-1})$, $h'_0(s)=o(s^{\beta/\gamma-2})$ if $\beta/\gamma\in (1,2)$
 and $h'_0(s)=o(\log(1/s))$ if $\beta/\gamma=2$;
 (b) $h_1(s)=o(s^{\beta/\gamma-1})$, $h'_1(s)=C' s^{\beta/\gamma-2}(1+o(1))$
  and $h''_1(s)=C' s^{\beta/\gamma-3}(1+o(1))$.

  We continue with the study of $e(u,s)$.
  It is easy to see from~\eqref{derivs} with $u=0$ and $s=0$, respectively, that $|e(0,s)|=O(s)$ as $s\to 0$ and that $|e(u,0)|=o(1)$ as $u\to 0$; to show $|e(u,0)|=o(1)$ we also use the Dominated Convergence Theorem. Also, it is easy to see that
  if $\beta/\gamma\in (1,2]$ then
 \begin{align*}
  \left|\frac{\partial}{\partial s}e(0,s)\right|&\ll \|v(0,0)-v(0,s) \|_{\cB_\theta} \int_Y\bar\psi^2 e^{s\bar\psi}\, d\mu_{\overline{\phi}}  
  +\left\|\frac{\partial^2}{\partial s^2}v(0,s)\right\|_{\cB_\theta}\int_Y (1-e^{s\bar\psi})\, d\mu_{\overline{\phi}}\\
  &\ll s\, s^{\beta/\gamma-2} +s^{\beta/\gamma-2} s \int_Y \bar\psi\, d\mu_{\overline{\phi}}\ll s^{\beta/\gamma-1},
    \end{align*}
where in the previous to last inequality we have used Lemma~\ref{lemma:ref} (i) (statement on $\frac{\partial}{\partial s}H_1(0,s)$)
and the estimate in $s$ in \eqref{eq:utaus}. If $\beta/\gamma = 2$, then, again by Lemma~\ref{lemma:ref} (i), the same statement holds with $s^{\beta/\gamma-2}$ replace by $\log 1/s$. In this case,
$\left|\frac{\partial}{\partial s}e(0,s)\right|$ is bounded by $s \log 1/s$.

We continue with the derivatives of  $Z_0, Z_1, Z_2$  in~\eqref{derivs}, when $u\ne 0$,  by considering each of the two cases.\\
  
 \textbf{The term $\bm{Z_0(u,s)}$}. Differentiating in $s$, we obtain
 \begin{align*}
\frac{\partial}{\partial s} Z_0(u,s) =\int_Y \bar\psi^2 e^{s\bar\psi}(1- e^{-u\tau})\, d\mu_{\overline{\phi}}, \quad \frac{\partial^2}{\partial s^2} Z_0(u,s) =\int_Y \bar\psi^3 e^{s\bar\psi}(1- e^{-u\tau})\, d\mu_{\overline{\phi}}. \end{align*}

Using the estimates~\eqref{eq:utaus} in $s$ in~\eqref{eq:utaus},  as $s\to 0$, $\int_Y \bar\psi^2 e^{s\bar\psi}\, d\mu_{\overline{\phi}}=Cs^{\beta/\gamma-2}(1+o(1))$
if $\beta/\gamma\in (1,2)$, $\int_Y \bar\psi^2 e^{s\bar\psi}\, d\mu_{\overline{\phi}}=C\log(1/s)(1+o(1))$
if $\beta/\gamma=2$ and $\int_Y \bar\psi^3 e^{s\bar\psi}\, d\mu_{\overline{\phi}}=Cs^{\beta/\gamma-3}(1+o(1))$
if $\beta/\gamma\in (2,3)$ for some $C>0$ (varying from estimate to estimate).

Thus,  as $u,s\to 0$, $\frac{\partial}{\partial s} Z_0(u,s)=o(s^{\beta/\gamma-2})$, \textbf{if  $\bm{\beta/\gamma\in (1,2)}$}, $\frac{\partial}{\partial s} Z_0(u,s)=o(\log(1/s))$, \textbf{if  $\bm{\beta/\gamma=2}$}
and $\frac{\partial^2}{\partial s^2} Z_0(u,s)=o(s^{\beta/\gamma-3})$, \textbf{if  $\bm{\beta/\gamma\in (2,3)}$}.  \\

  \textbf{The term $\bm{Z_1(u,s)}$}. Differentiating in $s$, we obtain
  
  \begin{align*}
  \frac{\partial}{\partial s} Z_1(u,s) =\int_Y\bar\psi^2 e^{-u\tau}e^{s\bar\psi}(v(0,0)-v(u,s))\, d\mu_{\overline{\phi}}   
  -\int_Y\bar\psi e^{-u\tau}e^{s\bar\psi}\, \frac{\partial}{\partial s} v(u,s)\, d\mu_{\overline{\phi}}.
   \end{align*}
  
  Recall that $\|v(0,0)-v(u,s)\|_{\cB_\theta}\ll u+s$. Thus, \textbf{if  $\bm{\beta/\gamma\in (1,2)}$},   
  \begin{align*}
  \left|\int_Y\bar\psi^2 e^{-u\tau}e^{s\bar\psi}(v(0,0)-v(u,s))\, d\mu_{\overline{\phi}} \right|\ll (u+s)\int_Y\psi^2 e^{s\bar\psi}\, d\mu_{\overline{\phi}}\ll (u+s)s^{\beta/\gamma-2},
  \end{align*}
  where we have used that  $\int_Y \bar\psi^2 e^{s\bar\psi}\, d\mu_{\overline{\phi}}=Cs^{\beta/\gamma-2}(1+o(1))$.

  Recall that by Lemma~\ref{lemma:ref}(i) (statement on $H_1$), $\|\frac{\partial}{\partial s}v(u,s)\|_{\cB_\theta}<\infty$.   Thus, 
  $\left|\int_Y\bar\psi e^{-u\tau}e^{s\bar\psi}\, \frac{\partial}{\partial s} v(u,s))\, d\mu_{\overline{\phi}}\right|=O(1)$.
  Therefore,

  \textbf{If $\bm{\beta/\gamma\in (1,2)}$}, then $\frac{\partial}{\partial s}Z_1(u,s)= O((u +s)s^{\beta/\gamma-2})$.
  
  \textbf{If $\bm{\beta/\gamma=2}$}, then we proceed the same using that  $\int_Y \bar\psi^2 e^{s\bar\psi}\, d\mu_{\overline{\phi}}=C\log(1/s)(1+o(1))$,
  which gives $\frac{\partial}{\partial s}Z_1(u,s)= O((u+s) \log(1/s))$.     
  
  \textbf{If  $\bm{\beta/\gamma\in (2,3)}$},  differentiating once more in $s$ and using a similar argument to the case  $\beta/\gamma\in (1,2)$ above
  (exploiting that $\int_Y \bar\psi^3 e^{s\bar\psi}\, d\mu_{\overline{\phi}}=Cs^{\beta/\gamma-3}(1+o(1))$)  we obtain $\frac{\partial^2}{\partial s^2}Z_1(u,s)= O((u+s) s^{\beta/\gamma-3})$. \\

   \textbf{The term $\bm{Z_2(u,s)}$}. Differentiating in $s$,
  
  \begin{align*}
  \frac{\partial}{\partial s} Z_2(u,s)=-\int_Y \bar\psi e^{-u\tau}e^{s\bar\psi}\, \frac{\partial}{\partial s}v(u,s)\, d\mu_{\overline{\phi}} 
  +\int_Y (1-e^{-u\tau}e^{s\bar\psi})\, \frac{\partial^2}{\partial s^2}v(u,s)\, d\mu_{\overline{\phi}}.
  \end{align*}
  
We already know that  $\|\frac{\partial}{\partial s}v(u,s)\|_{\cB_\theta}<\infty$. Hence, $\left|\int_Y \bar\psi e^{-u\tau}e^{s\bar\psi}\, \frac{\partial}{\partial s}v(u,s)\, d\mu_{\overline{\phi}} \right|=O(1)$.
Also, \textbf{if  $\bm{\beta/\gamma\in (1,2]}$},   by Lemma~\ref{lemma:ref}(i) (statement on $H_1$), $\|\frac{\partial^2}{\partial s^2}v(u,s)\|_{\cB_\theta}\ll u^{\beta-\gamma-1}$. 
Thus, $\left|\int_Y (1-e^{-u\tau}e^{s\bar\psi})\, \frac{\partial^2}{\partial s^2}v(u,s)\, d\mu_{\overline{\phi}} \right|=o(u^{\beta-\gamma-1})$, as $u,s\to 0$.
Thus, if $\beta/\gamma\in (1,2]$, then $\frac{\partial}{\partial s}Z_2(u,s)=o(u^{\beta-\gamma-1})$, as $u,s\to 0$.

 \textbf{If  $\bm{\beta/\gamma\in (2,3)}$},  differentiating once more in $s$ and using a similar argument to the case  $\beta/\gamma\in (1,2]$ above (but using the statement on $H_2$ in Lemma~\ref{lemma:ref}(ii)),
  we obtain $\frac{\partial^2}{\partial s^2}Z_2(u,s)= o(u^{\beta-2\gamma-1})$, as $u,s\to 0$.\\
  
  The statement on $\frac{\partial}{\partial s} e(u,s)$ for $\beta/\gamma\in (1,2]$ and for $\frac{\partial^2}{\partial s^2} e(u,s)$ for $\beta/\gamma\in (2,3)$  follows by putting all the above estimates on $Z_0, Z_1, Z_2$ together.\\
   
  \textbf{Proof of item (iii)}.  We continue from~\eqref{derivu} and compute that
  \begin{align*}
 \kappa(u,s)&=\int_Y\tau \bar\psi e^{-u\tau}e^{s\bar\psi}\, d\mu_{\overline{\phi}}
  -\int_Y\tau\bar\psi  e^{-u\tau}e^{s\bar\psi}(v(0,0)-v(u,s))\, d\mu_{\overline{\phi}}\\
  &\quad +\int_Y \tau \bar\psi e^{-u\tau} e^{s\bar\phi} \frac{\partial}{\partial s} v(u,s) \, d\mu_{\bar\phi} - \int_Y \tau e^{-u\tau} e^{s\bar\phi} \frac{\partial}{\partial u} v(u,s) \, d\mu_{\bar\phi} \\
  &\quad +\int_Y (1-e^{-u\tau}e^{s\bar\psi})\, \frac{\partial}{\partial s}\frac{\partial}{\partial u}v(u,s)\, d\mu_{\overline{\phi}}
   \end{align*}
   and 
 \begin{align}\label{kappadiff}
 \frac{\partial}{\partial s}\kappa(u,s)&=\int_Y\tau \bar\psi^2 e^{-u\tau}e^{s\bar\psi}\, d\mu_{\overline{\phi}}
  -\int_Y\tau\bar\psi^2  e^{-u\tau}e^{s\bar\psi}(v(0,0)-v(u,s))\, d\mu_{\overline{\phi}}\\
 \nonumber  & \quad + 2\int_Y\tau\bar\psi  e^{-u\tau}e^{s\bar\psi}\frac{\partial}{\partial s}v(u,s)\, d\mu_{\overline{\phi}} - 2\int_Y \bar\psi e^{-u\tau}e^{s\bar\psi}\, \frac{\partial}{\partial s}\nonumber \frac{\partial}{\partial u}v(u,s)\, d\mu_{\overline{\phi}}\\
 \nonumber & \quad + \int_Y \tau e^{-u\tau} e^{s\bar\phi} \frac{\partial^2}{\partial s^2} v(u,s) \, d\mu_{\bar\phi}
 - \int_Y \tau^2 e^{-u\tau} e^{s\bar\phi} \frac{\partial}{\partial u} v(u,s) \, d\mu_{\bar\phi} \\
  \nonumber & \quad +\int_Y (1-e^{-u\tau}e^{s\bar\psi})\, \frac{\partial^2}{\partial s^2}\frac{\partial}{\partial u}v(u,s)\, d\mu_{\overline{\phi}}\\
  \nonumber & =:\kappa_1(u,s)+\kappa_2(u,s)+\kappa_3(u,s)  
  +\kappa_4(u,s) +\kappa_5(u,s) +\kappa_6(u,s) +\kappa_7(u,s). \end{align}
   We provide the argument for the case  $\beta/\gamma\in (1,2]$ .  The case  $\beta/\gamma\in (2,3)$ follows by a similar argument after differentiating~\eqref{kappadiff} once more in $s$.
   
   Using Lemma~\ref{lemma:ref} (i),
   \begin{align*}
    \kappa(u,s)&=\int_Y\tau \bar\psi e^{-u\tau}e^{s\bar\psi}\, d\mu_{\overline{\phi}}
    +O\left((u+s)\int_Y\tau\bar\psi  e^{-u\tau}e^{s\bar\psi}\, d\mu_{\overline{\phi}}\right)   
    +O\left(u^{\beta-\gamma-1} (u+s)\right).
       \end{align*}
    Taking $s=0$ in this equation, we get that there exists $C>0$ so that
   \begin{align*}
   \kappa(u,0)&=\int_Y\tau \bar\psi e^{-u\tau}\, d\mu_{\overline{\phi}}  +O\left(u\int_Y\tau\bar\psi  e^{-u\tau}\, d\mu_{\overline{\phi}}\right)  +O\left(u^{\beta-\gamma} \right)\\
   &=C u^{\beta-\gamma-1}(1+o(1)),
     \end{align*}
  where in the last equality we have used~\eqref{eq:utauprec}.
  
  We estimate $\kappa_1,\ldots,\kappa_7$ in~\eqref{kappadiff}.
  Note that differentiating once more in~\eqref{us11} and using the estimates
  in Section~\ref{sec:integr},
   $\int_Y\tau \bar\psi^2 e^{-u\tau}\, d\mu_{\overline{\phi}}=Cu^{\beta-2\gamma-1}(1+o(1))$.
  Thus, as $u,s\to 0$,
     \begin{align*}
  \kappa_1(u,s)=\int_Y\tau \bar\psi^2 e^{-u\tau}\, d\mu_{\overline{\phi}} +\int_Y\tau \bar\psi^2 (e^{s\bar\psi}-1)e^{-u\tau}\, d\mu_{\overline{\phi}}  
  =Cu^{\beta-2\gamma-1}(1+o(1)).    
  \end{align*}
  By arguments already used in estimating quantities in proof of items (i) and (ii) above, $\kappa_2(u,s), \kappa_3(u,s),\kappa_4(u,s),\kappa_6(u,s)=o(u^{\beta-2\gamma-1})$, as $u,s\to 0$.
  Finally, by Lemma~\ref{lemma:ref}(i) (statement on $K_2$),   $\|\frac{\partial^2}{\partial s^2}\frac{\partial}{\partial u}v(u,s)\|_{\cB_\theta}\ll u^{\beta-2\gamma-1}$.
  Thus, $\kappa_5(u,s),\kappa_7(u,s)=o(u^{\beta-2\gamma-1})$, as $u,s\to 0$.~\end{pfof}

\section{Proof of Proposition~\ref{prop:gmsec}}

Using the technical results in Section~\ref{sec:refined} we can proceed to the proof of
 Proposition~\ref{prop:gmsec}. We recall that this is a refined version
of Proposition~\ref{prop:gm} under somewhat stronger assumptions (that is, regular variation of the tail behaviour). In this sense, the task of this section 
is to go over the steps of the proof of Proposition~\ref{prop:gm} and obtain higher order expansions.
From this proof, we recall that a first step is to refine the estimate 
on $\frac{\partial}{\partial s}\frac{\partial}{\partial u}\bar p(u,s)$
(see~\eqref{eq:dounderpbar}). For the proof of Proposition~\ref{prop:gmsec}, we shall need to understand $\frac{\partial^2}{\partial s^2}\frac{\partial}{\partial u}\bar p(u,s)$ as $u,s\to 0$.

\begin{lemma}\label{lemma:doubderder} Assume the setup of Proposition~\ref{prop:gmsec} with larger range of $\gamma$,
namely $\gamma\in (\beta-1,\beta)$. There exist $C_2, C_3, C_4, C_5>0$(varying from line to line) so that the following hold as $u,s\to 0$.
\begin{itemize}
\item[(i)] If $\beta/\gamma\in (1,2)$ then 
\(
\frac{\partial^2}{\partial s^2}\frac{\partial}{\partial u}\bar p(u,s)=-C_2s^{\beta/\gamma-2}(1+o(1)+C_3 u^{\beta-2\gamma-1}(1+o(1)).\)
Also, $\frac{\partial}{\partial s}\frac{\partial}{\partial u}\bar p(u,s)= C_4u^{\beta-\gamma-1}(1+o(1))+C_5 s u^{\beta-2\gamma-1}(1+o(1))$.

\item[(ii)] If $\beta/\gamma=2$ then 
\(
\frac{\partial^2}{\partial s^2}\frac{\partial}{\partial u}\bar p(u,s)=-C_2\log(1/s)(1+o(1))+C_3 u^{-1}(1+o(1)).\)
Also, $\frac{\partial}{\partial s}\frac{\partial}{\partial u}\bar p(u,s)= C_4u^{\beta-\gamma-1}(1+o(1))+ C_3s u^{-1}(1+o(1))-C_2 s\log(1/s)(1+o(1))$.
\item[(ii)] If $\beta/\gamma\in (2,3)$ then 
\(
\frac{\partial^3}{\partial s^3}\frac{\partial}{\partial u}\bar p(u,s)=-C_2s^{\beta/\gamma-3}(1+o(1))-C_3 u^{\beta-3\gamma-1}(1+o(1)).\)
Also, $\frac{\partial^2}{\partial s^2}\frac{\partial}{\partial u}\bar p(u,s)\Big|_{s=0}=- C_4u^{\beta-2\gamma-1}(1+o(1))+C_3 s u^{\beta-3\gamma-1}(1+o(1))$.
\end{itemize}

\end{lemma}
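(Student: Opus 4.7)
The starting point is $\bar p(u,s)=\log\lambda(u,s)$ from~\eqref{eq:presindev}, so every mixed partial derivative of $\bar p$ is a rational expression in $\lambda$ and its partials. Writing subscripts for partial derivatives,
\begin{equation*}
 \bar p_{us}=\frac{\lambda_{us}}{\lambda}-\frac{\lambda_u\lambda_s}{\lambda^2},
\qquad
 \bar p_{uss}=\frac{\lambda_{uss}}{\lambda}-\frac{2\lambda_{us}\lambda_s+\lambda_u\lambda_{ss}}{\lambda^2}+\frac{2\lambda_u\lambda_s^2}{\lambda^3},
\end{equation*}
with an analogous expression for $\bar p_{usss}$. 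The plan is to expand each numerator using Lemma~\ref{lemm:refev} and identify the leading order terms as $u,s\to 0$.

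For each regime I would first read off from Lemma~\ref{lemm:refev} the needed asymptotics of $\lambda$, $\lambda_u$, $\lambda_s$, $\lambda_{ss}$, $\kappa=\lambda_{us}$, $\kappa_s$ and, in the case $\beta/\gamma\in(2,3)$, also $\lambda_{sss}$ and $\kappa_{ss}$. In particular $\lambda\to 1$, $\lambda_u=-\tau^*+d$, and $\lambda_s=\bar\psi^*+h(s)+h_0(s)+e(u,s)$ (or the analogue with $h_1$), while $\kappa$ and its $s$-derivatives each carry a singular factor $u^{\beta-q_1\gamma-1}$ for the appropriate $q_1\in\{1,2\}$, and $\lambda_{ss}$, $\lambda_{sss}$ carry the corresponding $s^{\beta/\gamma-2}$, $\log(1/s)$ or $s^{\beta/\gamma-3}$ singularities from Lemma~\ref{lemm:refev}(ii).

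Substituting and collecting leading orders gives the ``Also'' formulas first: in $\bar p_{us}$, the piece $-\lambda_u\lambda_s/\lambda^2$ only contributes the regular constant $\tau^*\bar\psi^*$ plus genuinely subordinate remainders of order $u^{\beta-1}$ and $s^{\beta/\gamma-1}$ (or $\log(1/s)$), whereas $\kappa/\lambda$ contributes the $u^{\beta-\gamma-1}$ blow-up from Lemma~\ref{lemm:refev}(iii). The cross term $C_5\,s\,u^{\beta-2\gamma-1}$ in (i), (ii) and the $C_3\,s\,u^{\beta-3\gamma-1}$ term in (iii) come from Taylor-expanding $\kappa$ in $s$ using Lemma~\ref{lemm:refev}(iii)(*)(**). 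Differentiating once (twice in (iii)) more in $s$, the pure-$s$ singularity $-C_2 s^{\beta/\gamma-2}$ (respectively $-C_2\log(1/s)$, respectively $-C_2 s^{\beta/\gamma-3}$) comes from the term $\lambda_u\lambda_{ss}/\lambda^2$ (respectively $\lambda_u\lambda_{sss}/\lambda^2$) via Lemma~\ref{lemm:refev}(ii), while the pure-$u$ blow-up $C_3 u^{\beta-2\gamma-1}$ (respectively $-C_3 u^{\beta-3\gamma-1}$) comes from $\kappa_s/\lambda$ (respectively $\kappa_{ss}/\lambda$) via Lemma~\ref{lemm:refev}(iii). All other products are strictly lower order, and the signs are dictated by those in Lemma~\ref{lemm:refev}(ii)(b) and (iii).

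The main obstacle is bookkeeping of the $o(1)$ and $O(\cdot)$ remainders under differentiation in $s$: terms like $d(u,s)$ are only $o(1)$ as $u\to 0$, but when differentiating in $s$ we need their $s$-derivatives to remain $o$-smaller than the claimed leading terms. This is exactly what the quantitative smoothness estimates in Lemma~\ref{lemm:refev}(i),(ii)(c)(*)(**)(***) and (iii) are designed to provide: the $s$-derivative of every subdominant piece is controlled by a product of negative powers of $u$ and $s$ whose exponents are strictly smaller (in absolute value) than those of the leading terms, so nothing promoted to leading order survives hidden inside an $o(1)$. Once this bookkeeping is set up, the lemma follows by direct algebraic substitution, comparing exponents case by case; no new analytic estimates beyond Lemma~\ref{lemm:refev} are required.
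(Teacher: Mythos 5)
Your proposal is correct and follows essentially the same route as the paper: compute the mixed partials of $\bar p=\log\lambda$, substitute the refined asymptotics from Lemma~\ref{lemm:refev} for $\lambda$, $\lambda_u$, $\lambda_s$, $\kappa=\lambda_{us}$ and their $s$-derivatives, and compare exponents term by term (the paper packages this as $-A/\lambda^2+\kappa/\lambda$ with $A=\lambda_u\lambda_s$ and splits $\bar p_{uss}$ into pieces $N_1,\dots,N_4$, but it is the same bookkeeping you describe). Your observation that the whole point of the quantitative $s$-derivative controls in Lemma~\ref{lemm:refev}(i),(ii)(c),(iii) is to prevent hidden contributions from surviving to leading order is exactly the crux, and is what the paper's case-by-case estimates of $N_1$ and $N_3$ deliver.
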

\begin{proof} First we recall from~\eqref{eq:dounderpbar} that
\begin{align*}
 \frac{\partial}{\partial s}\frac{\partial}{\partial u}\bar p(u,s)
  &=  -\frac{\frac{\partial}{\partial u}\lambda(u,s)\frac{\partial}{\partial s}\lambda(u,s)}{\lambda(u,s)^2}+\frac{\frac{\partial}{\partial s}\frac{\partial}{\partial u}\lambda(u,s)}{\lambda(u,s)}.
\end{align*}
Set $A(u,s):=\frac{\partial}{\partial u}\lambda(u,s)\frac{\partial}{\partial s}\lambda(u,s)$ and recall (for instance, from Lemma~\ref{lemm:refev}(iii)) that 
$\kappa(u,s)=\frac{\partial}{\partial s}\frac{\partial}{\partial u}\lambda(u,s)$.
Compute that
\begin{align*}
 \frac{\partial^2}{\partial s^2}\frac{\partial}{\partial u}\bar p(u,s)
  &=  -\frac{\frac{\partial}{\partial s}A(u,s)}{\lambda(u,s)^2}-2\frac{A(u,s)\frac{\partial}{\partial s}\lambda(u,s) }{\lambda(u,s)^3}
  +\frac{\frac{\partial}{\partial s}\kappa(u,s)}{\lambda(u,s)}-\frac{\kappa(u,s)}{\lambda(u,s)^2}\\
  &=:N_1(u,s)+N_2(u,s)+N_3(u,s)+N_4(u,s).
\end{align*}

We provide the proof of item (i). Item (ii) follows by the same argument using the statements for the case $\beta/\gamma=2$ in Lemma~\ref{lemm:refev} (i) and (ii). Item (iii) follows by a similar argument after differentiating once more and using the statements for the case $\beta/\gamma\in (2,3)$ in Lemma~\ref{lemm:refev} (i) and (ii). 

From the estimates of Lemma~\ref{lemm:refev} (i) and (ii) (the statements for the case $\beta/\gamma\in (1,2)$), it is easy to see that $N_2$ and $N_4$
do not contribute to the main asymptotics (because they go to a constant as $u,s\to 0$). We need to look at  $N_1$ and $N_3$.\\

\textbf{The term $N_1(u,s)$}.
Using the same notation as in Lemma~\ref{lemm:refev} (i) and (ii),
\begin{align*}
A(u,s)=
\left(-\tau^*+d(u,s)\right)\left(\bar\psi^*+ C s^{\beta/\gamma-1}+h(s)+h_0(s) +e(u,s) \right)
\end{align*}
and 
\begin{align*}
\frac{\partial}{\partial s} A(u,s)=&
\frac{\partial}{\partial s} d(u,s)\, \left(\bar\psi^*+ C s^{\beta/\gamma-1}+h(s)+h_0(s)  +e(u,s) \right)\\
&+ \left(-\tau^*+d(u,s)\right)\left( C(\beta/\gamma-1) s^{\beta/\gamma-2}+\frac{\partial}{\partial s} h(s)+h_0'(s)  +\frac{\partial}{\partial s} e(u,s) \right).\end{align*}

Using all the estimates on $d, h_0, e$ in Lemma~\ref{lemm:refev} (i) and (ii) (the statements for the case $\beta/\gamma\in (1,2)$),
we obtain that there exist $C_2, C_2'>0$ so that
\begin{align*}
\frac{\partial}{\partial s} A(u,s)&=-C_2 s^{\beta/\gamma-2}(1+o(1)) + C_2' u^{\beta-\gamma-1}(1+o(1)),
\end{align*}
which gives the asymptotics for $N_1(u,s)$.
In the previous displayed equation, apart from the estimates on $\frac{\partial}{\partial s} d(u,s)$ and  $\frac{\partial}{\partial s} e(u,s)$, we have used the immediate consequence of 
 Lemma~\ref{lemm:refev}(ii) that $d(u,s)=O(s u^{\beta-\gamma-1})$ and that $ e(u,s)= o(s u^{\beta-\gamma-1})$.
 
 \textbf{The term $N_3(u,s)$}. By Lemma~\ref{lemm:refev} (iii) (the statement for the case $\beta/\gamma\in (1,2)$),
 $\frac{\partial}{\partial s}\kappa(u,s)=C_3 u^{\beta-2\gamma-1}(1+o(1))$, for some $C_3>0$. This gives the same asymptotics for $N_3$. Therefore,
\begin{align*}
N_1(u,s)+N_3(u,s)=-C_2 s^{\beta/\gamma-2}(1+o(1))+C_3 u^{\beta-2\gamma-1}(1+o(1)),\end{align*}
which gives the first statement in item (i).

The second statement in item (i) follows immediately from the first together with the asymptotics of $\kappa(u,0)$ in Lemma~\ref{lemm:refev} (iii).~\end{proof}

We can now proceed to

\begin{pfof}{Proposition~\ref{prop:gmsec}}
We redo all steps in the proof of  Proposition~\ref{prop:gm}(ii) using  Lemma~\ref{lemm:refev}.

Recall $\bar p(u,s)=\log \lambda(u,s)$. The analogue of~\eqref{eq:expld1} is

\begin{equation}
  \frac{\partial}{\partial u}\bar p(u,s)=-\tau^*+D(u,s),\quad \frac{\partial}{\partial s}\bar p(u,s)= \bar\psi^*+E(u,s),
 \end{equation}
 where
 \begin{itemize}
  \item[(a)] $D(u,s)$ satisfies the same properties as $d(u,s)$ in Lemma~\ref{lemm:refev}(i).
    \item[(b)]$E(u,s)$ satisfies the same properties as $e(u,s)$ in Lemma~\ref{lemm:refev}(ii). 
    \end{itemize}
 By Lemma~\ref{lemm:refev}(i) and (ii), we have the following refined version of~\eqref{eq:derpos}
 (with $C$ varying from line to line).
   \begin{eqnarray}\label{eq:derpos2}
 \frac{\partial^2}{\partial s^2}\bar p(0,s)=Cs^{\beta/\gamma-2}(1+o(1)),& &\text{ if }\beta/\gamma\in (1,2)\\[1mm]
 \nonumber    \frac{\partial^2}{\partial s^2}\bar p(0,s)=C\log(1/s)(1+o(1)), & & \text{ if }\beta/\gamma=2 \\[1mm]
  \nonumber \frac{\partial^3}{\partial s^3}\bar p(0,s)=Cs^{\beta/\gamma-3}(1+o(1)),  & &\text{ if }\beta/\gamma\in (2,3).
   \end{eqnarray}
The analogue of~\eqref{eq:impl}  for any small $u_0>0$ is
 \begin{align*}
  \bar p(u_0,s)-\bar p(0,s)=-\tau^* u_0+ \int_0^{u_0} D(u,s)\, du:=-\tau^* u_0+ L(u_0,s),
  \end{align*}
where $D(u,s)$ satisfies the same properties as $d(u,s)$ in Lemma~\ref{lemm:refev}(i). 
Moreover, as in the proof of Proposition~\ref{prop:gm} (ii),
\begin{align}\label{derD}
\frac{\partial}{\partial s}D(u,s)=\frac{\partial}{\partial s}\frac{\partial}{\partial u}\bar p(u,s).
\end{align}
By the argument used in the proof of Proposition~\ref{prop:gm} in deriving~\eqref{eq:derivp},
\begin{align}\label{eq:derivp2}
u_0'(s) = \frac{\frac{\partial}{\partial s} M(u_0,s)}{\tau^*-\frac{\partial}{\partial u_0} M(u_0,s)},
\end{align}
where, as in the proof of Proposition~\ref{prop:gm},
\begin{align}\label{eq;m0}
 M(u_0,s)=L(u_0,s)+\bar p(0,s) \qquad \text{ with }\ \frac{\partial}{\partial u_0} L(u_0,s)=D(u_0,s).
 \end{align}
 Differentiating~\eqref{eq:derivp2} once more in $s$,
   \begin{align}\label{secondp}
\nonumber p''(s)& =\frac{\frac{\partial^2}{\partial s^2} M(u_0,s)(\tau^*-\frac{\partial}{\partial u_0} M(u_0,s)) } {\left(\tau^*-\frac{\partial}{\partial u_0} M(u_0,s)\right)^2}
 +\frac{\frac{\partial}{\partial s} M(u_0,s)\frac{\partial^2}{\partial u_0\partial s} M(u_0,s)}{\left(\tau^*-\frac{\partial}{\partial u_0} M(u_0,s)\right)^2}\\
 &=:M_1(u_0,s)+M_2(u_0,s).
\end{align}

We complete the proof of (i), that is, we treat the case $\beta/\gamma\in (1,2)$ using the estimates in Lemma~\ref{lemm:refev}.
The precise asymptotics in (ii) for the case $\beta/\gamma = 2$ follow by the same argument using the corrsponding estimates in Lemma~\ref{lemm:refev}.
Item (iii), the case $\beta/\gamma\in (2,3)$, (after taking one more derivative in $s$)
is similar and omitted.\\

\textbf{Proof of (i), the case $\bm{\beta/\gamma\in (1,2)}$}.\\

\textbf{The term $\bm{M_1(u_0,s)}$ defined in~\eqref{secondp}}.
Differentiating~\eqref{eq;m0},
\begin{align}\label{eq:m1111}
 \frac{\partial}{\partial s} M(u_0,s)=\frac{\partial}{\partial s} L(u_0,s)+\frac{\partial}{\partial s}\bar p(0,s).
 \end{align}
 
 Using~\eqref{eq:expld2},~\eqref{derD}
and Lemma~\ref{lemma:doubderder}(i),
\begin{align*}
 \frac{\partial}{\partial s} L(u_0,s)&=\int_0^{u_0}\frac{\partial}{\partial s}D(u,s)\, du
 =  C_4 u_0^{\beta-\gamma}(1+o(1))+ C_3 s\, u_0^{\beta-2\gamma}(1+o(1)).
\end{align*}
By Proposition~\ref{prop:gm} (ii), $p(s)=u_0(s)=\frac{\bar p(0,s)}{\tau^*}=s \frac{\bar\psi^*}{\tau^*}(1+o(1))$, as $s\to 0$. Thus,
\begin{align*}
 \frac{\partial}{\partial s} L(u_0,s)&=C_4 s^{\beta-\gamma}(1+o(1))+ C_3 s^{\beta-2\gamma+1}(1+o(1))=C_4 s^{\beta-\gamma}(1+o(1)),
 \end{align*}
 where in the last equality we have used that $\gamma<1$.

By Lemma~\ref{lemm:refev}(ii), $\frac{\partial}{\partial s}\bar p(0,s)=\bar\psi^*+Cs^{\beta/\gamma-1}(1+o(1))$.
Since $\beta>\gamma$,
\begin{align}\label{firstderivp}
 \frac{\partial}{\partial s} M(u_0,s) =\bar\psi^*+Cs^{\beta/\gamma-1}(1+o(1))=\bar\psi^*(1+o(1)).
 \end{align}
  Differentiating~\eqref{eq:m1111} once more in $s$ and using~\eqref{eq:derpos2},
\begin{align*} \frac{\partial^2}{\partial s^2} M(u_0,s)=\frac{\partial^2}{\partial s^2} L(u_0,s)+\frac{\partial^2}{\partial s^2}\bar p(0,s)
=\frac{\partial^2}{\partial s^2} L(u_0,s)+Cs^{\beta/\gamma-2}(1+o(1)).
 \end{align*}
 
 Next, recall~\eqref{derD} and note that
$\frac{\partial^2}{\partial s^2}D(u,s)=\frac{\partial^2}{\partial s^2}\frac{\partial}{\partial u}\bar p(u,s)$.
By Lemma~\ref{lemma:doubderder}(i),
\(
\frac{\partial^2}{\partial s^2}\frac{\partial}{\partial u}\bar p(u,s)=-C_2s^{\beta/\gamma-2}(1+o(1)+C_3 u^{\beta-2\gamma-1}(1+o(1)).\)
Also, recall that $u_0(s)=s \frac{\bar\psi^*}{\tau^*}(1+o(1))$, as $s\to 0$ for $C_2, C_3>0$.
 Thus, 
  \begin{align}\label{eq:reftoit}
  \frac{\partial^2}{\partial s^2} L(u_0,s)&=\int_0^{u_0}\frac{\partial^2}{\partial s^2}D(u,s)\, du
  =-C_2s^{\beta/\gamma-1}(1+o(1)+C_3 s^{\beta-2\gamma}(1+o(1))\nonumber\\
  &=C_3 s^{\beta-2\gamma}(1+o(1)).
  \end{align}
  
  \begin{rmk}\label{rmk:reftoit}
   If we do not assume regular variation for the tail
   $\mu_{\overline\phi}(\psi_0\ge x)=\mu_{\overline\phi}(\tau^\gamma\ge x)$, we can still use the same steps as above in obtaining~\eqref{eq:reftoit}
   and rougher calculations, similar to the ones used in obtaining~\eqref{eq:utau}, to show that $\left|\frac{\partial^2}{\partial s^2} L(u_0,s)\right|=O(s^{\gamma(\beta/\gamma-2)})$. In particular, following these steps one has that if $\psi_0\in L^{q_1}(\mu_{\overline\phi})$ then $\left|\frac{\partial^2}{\partial s^2} L(u_0,s)\right|=O(s^{a(q_1-2)})$ for some $a>0$,
   so $\left|\frac{\partial^2}{\partial s^2} L(u_0,s)\right|=o(1)$, as $s\to 0$.
  \end{rmk}

    Putting the previous three displayed equations together and noticing that $s^{\beta-2\gamma}>s^{\beta/\gamma-2}$ (since $\gamma<1$), we obtain
   \begin{align}\label{secderivp}
   \frac{\partial^2}{\partial s^2} M(u_0,s)=C_3 s^{\beta-2\gamma}(1+o(1)).
   \end{align} 
   
 We have  $\frac{1}{\tau^*-\frac{\partial}{\partial u_0} M(u_0,s)}=\frac{1}{\tau^*-D(u_0,s)}
  =\frac{1}{\tau^*}\left(1+O(D(u_0,s)\right)$
 as in the proof of Proposition~\ref{prop:gm} (ii).
  Using the properties of $D_0(u,s)$ in item (a) after~\eqref{eq:expld1}  (both smoothness in $s$ and asymptotics of $D(u_0, 0)$),
  and using that $u_0(s)=O(s)$, we have
  \[
  \frac{1}{\tau^*-\frac{\partial}{\partial u_0} M(u_0,s)}=\frac{1}{\tau^*}(1+O(s^{\beta-1}+s))
  \] as $s\to 0$.
  This together with~\eqref{secderivp}  gives that as $s\to 0$,
  \begin{align}\label{eqM1}
  M_1(u_0,s)=C_3 s^{\beta-2\gamma}(1+o(1)).  \end{align}
   
 \textbf{The term $\bm{M_2(u_0,s)}$ defined in~\eqref{secondp}}.

 Differentiating~\eqref{eq:m1111} once more in $u_0$,
  $\frac{\partial^2}{\partial u_0\partial s} M(u_0,s)=\frac{\partial^2}{\partial u_0\partial s} L(u_0,s)$.
  Recall that $ \frac{\partial}{\partial s} L(u_0,s)=\int_0^{u_0}\frac{\partial}{\partial s}D(u,s)\, du$
  and that $D(u,s)$ is uniformly continuous in $u$. Thus, $\frac{\partial^2}{\partial u_0\partial s} M(u_0,s)= \frac{\partial}{\partial s}D(u_0,s)$.
  Recalling~\eqref{derD},
  \begin{align*}
  \frac{\partial^2}{\partial u_0\partial s} M(u_0,s)=\frac{\partial}{\partial s}\frac{\partial}{\partial u}\bar p(u,s)\Big|_{u=u_0}.  
    \end{align*}  
 By Lemma~\ref{lemma:doubderder}(i),
\begin{align*}
 \frac{\partial}{\partial s}\frac{\partial}{\partial u}\bar p(u,s)\Big|_{u=u_0} =  C_4 u_0^{\beta-\gamma-1}(1+o(1))+ C_3 s\, u_0^{\beta-2\gamma-1}(1+o(1)).
\end{align*}  for $C_3, C_4>0$.
   Since $u_0(s)=s \frac{\bar\psi^*}{\tau^*}(1+o(1))$, as $s\to 0$,   
  \begin{align*}
  \frac{\partial^2}{\partial u_0\partial s} M(u_0,s)=C_4 s^{\beta-\gamma-1}(1+o(1))+ C_3 s^{\beta-2\gamma}(1+o(1))=C_4 s^{\beta-\gamma-1}(1+o(1)),
     \end{align*}  
  where in the last equation we have used again that $\gamma<1$.
  
  Recalling~\eqref{firstderivp} and that $\frac{1}{\tau^*-\frac{\partial}{\partial u_0} M(u_0,s)}=
  =\frac{1}{\tau^*}(1+o(1))$,
  we have $M_2(u_0,s)= \frac{\bar\psi^*}{\tau^*}C_4 s^{\beta-\gamma-1}(1+o(1))$.  This together with~\eqref{eqM1}
  gives the conclusion after recalling again that $\gamma<1$, which ensures that   $s^{\beta-\gamma-1} > s^{\beta-2\gamma}$.   ~\end{pfof}

\section{Proofs of the main abstract results}
\label{sec:proofmainth}

The proofs of the main results will make use of the restricted pressure. 
Analogous to \cite[Definition 5.1]{RuhSar22}, we define 
\begin{align*}
q(a) = q_{\phi, \psi}(a) & := \sup\left\{P_{\F, \nu}(\phi):\nu\in\mathcal{M}_{\F},\int_{Y^\tau}\psi\, d\nu=a \right\} \\
&\ =  \sup\left\{\frac{P_{T, \mu}(\bar\phi)}{\int\tau~d\mu}:\mu\in\mathcal{M}_T(\tau),\frac{\int_{Y}\bar\psi\, d\mu}{\int\tau~d\mu}=a \right\}.
\end{align*}

\subsection{Proof of Theorem~\ref{thm:firstmain}}

\begin{pfof}{Theorem~\ref{thm:firstmain}} Given Proposition~\ref{prop:gm} with $q_1>3$, the details are very similar with
those in~\cite[Proof of  Lemma 5.2]{RuhSar22} (and also the main line of the argument in~\cite[Proof of  Proposition 6.1]{RuhSar22}).
We recall most of the details, partly for completeness, partly because our setup is different (unbounded potential but more restricted $\psi$).

By Proposition~\ref{prop:gm}(ii), $p'(0)=\frac{\int_Y\overline{\psi}\, d\mu_{\bar\phi}}{\int_Y\tau\, d\mu_{\bar\phi}}=\int_{Y^\tau}\psi\, d\nu_{\phi}=a_0$. By assumption, $\nu_{\phi}$ is the unique equilibrium measure for $\phi$. 
Since $p''(s) \geq 0$ is continuous with $p''(0) = \sigma^2 > 0$
(by Proposition~\ref{prop:gm}), $p'$ is strictly increasing near $0$.

Given $h\in (0, \delta_0)$, for $\delta_0$ is as in Proposition~\ref{prop:gm},  let $a\in (p'(0), p'(h))$. By the Intermediate Value Theorem, there exists $s\in (0,h)$ so that $p'(s)=a$.
By Proposition~\ref{prop:gm}(ii) and (iii), the second derivative is well-defined whenever $q_1>2$.

We next show that $p$ is strictly convex in our domain of interest.  Throughout the rest of the proof let $K > \sigma^2$, so $\delta_0\frac{\sigma^2}{K}<\delta_0$.
By the assumption $q_1 > 3$, the third derivative $p'''$ is well-defined and we can assume $|p'''|<K$
by taking $K$ larger if necessary.
We use this to show strict convexity and that the solution to the equation $p'(s)=a$ in $s$ is unique.
To see this, we recall  the argument by contradiction in~\cite[Proof of Lemma 5.2]{RuhSar22}.
As in~\cite[Proof of Lemma 5.2]{RuhSar22}, if there exists $s_0\ne s, s\in \left(0,\delta_0\frac{\sigma^2}{K}\right)$ 
so that $p'(s_0)=a$ then $p''$ would have vanished in this interval.
This is not possible because for some $s'\in (0,s)$,
\[
 |p''(s)-\sigma^2|=|p''(s)-p''(0)|= s |p'''(s')|\le 
 K \cdot \left(\delta_0\frac{\sigma^2}{K}\right)=\delta_0\sigma^2\ne 0.
\] 

Next we find useful relationships between $a$, $s$ and $\nu_s$ for the appropriate $s$.
For the unique $s$ so that $p'(s)=a$, we know that $R(u,s)$ satisfies the spectral gap: this follows since $R(0,0)$ has a spectral gap in $\cB$ and $R(u,s)$ is continuous in $u, s$ (by Lemma~\ref{lemma:sm}). Thus, the potential $\overline{\phi+s\psi-p(s)}$ has a unique equilibrium state ${\mu}_s$.  This projects to an equilibrium state $\nu_s$ for the potential $\phi+s\psi$ (the unique such measure), as follows. 
First note that from the Gibbs property and since $s, p(s)>0$ and $\bar\psi<\infty$, we get
$$
\int\tau~d\mu_s\ll \int \tau e^{s\bar\psi-\tau p(s) }~d\mu_{\bar\phi} \ll \int\tau~d\mu_{\bar\phi}<\infty,
$$
so $\mu_s\in \M_T(\tau)$ and we obtain $\nu_s\in \M_F$ from \eqref{eq:AK}.  Moreover by the Abramov formula,
$P_{F, \nu_s}(\phi+s\phi-p(s)) =0$, which firstly implies that $\nu_s$ is an equilibrium state for $\phi+s\psi$.  It is also standard to show that this is the unique equilibrium state for this potential and that $\int \psi~d\nu_s = p'(s) =a$, as above.  Moreover, 
if $\nu\in \M_F$ has $P_\nu(\phi)>P_{\nu_s}(\phi)$ and $\int\psi~d\nu=a$, then
$$P_\nu(\phi+s\psi) = P_\nu(\phi)+sa>P_{\nu_s}(\phi)+sa = P_{\nu_s}(\phi+s\psi)=p(s),$$
a contradiction.  Therefore, 
\begin{equation} P_\nu(\phi)\le  p(s) - s\int\psi~d\nu_s= P_{\nu_s}(\phi)=q(a)\label{eq:mu_s}
\end{equation}
for any $\nu\in \M_F$ with $\int\psi~d\nu=a$.

The final task here is to get a relation between $a-a_0$ in terms of $P(\phi)-P_{\nu}(\phi)$.
Recall $q_1 > 3$. By Proposition~\ref{prop:gm}(ii), $p'''$ is $C^{q_1-[q_1]}$.
Thus,
\[
p(s) = p(0)+ sp'(0)+ \frac{s^2}{2}p''(0)+ \frac{s^3}{6}p'''(0)+ O(s^{3+\eps}),
\]
for some $\eps>0$, so $p'(s) = p'(0)+ sp''(0)+\frac{s^2}{2}p'''(0)+O(s^{2+\eps})$.
Then for $s$ so that $p'(s)=a$, and recalling that $p''(0)=\sigma^2$,
\begin{align*}
a-a_0&=p'(s)-p'(0)=sp''(0)+\frac{s^2}{2}p'''(0)+O(s^{2+\eps})\\
&= s\left(p''(0)+\frac{s}{2}p'''(0)+O(s^{1+\eps})\right)
=s\sigma^2\left(1+O(s\sigma^{-2})\right),
\end{align*}
where in the last step we have used that $s\in\left(0,\delta_0\frac{\sigma^2}{K}\right)$ and that $|p'''(0)|<K$. 
Hence,
\begin{align}\label{eq:expssss}
s=\frac{a-a_0}{\sigma^2}\left(1+O\left(s^{1+\eps}\sigma^{-2}\right)\right).
\end{align}

Next, arguing word for word as in the \cite[Proof of Lemma 5.2, item (4)]{RuhSar22}, $q(a_0)=P_{\nu_\phi}(\phi)$
and since, by assumption, $P_{\nu_\phi}(\phi)=p(0)=0$, we have $q(a_0)=0$. This together with~\eqref{eq:mu_s}, the fact
that $a=p'(s)$, the expansions of $p(s)$ and $p'(s)$ and \eqref{eq:mu_s}, imply that for some $\eps>0$,
\begin{align*}
q(a_0)-q(a)&=sp'(s)-p(s) = \frac{s^2}{2}\sigma^2+ \frac{s^3}{6}p'''(0)+O(s^{3+\eps}).
\end{align*}
This together with~\eqref{eq:expssss} gives 
\begin{align*}
q(a_0)-q(a)=\frac{(a-a_0)^2}{2\sigma^2}\left(1+O\left(\sigma^{-2}(a-a_0)\right)\right).
\end{align*}
So for $\nu\in \M_\F$ with $\int\psi~d\nu=a$, the above equation and~\eqref{eq:mu_s} imply
\begin{equation}
P_{\nu_\phi}(\phi)- P_\nu(\phi) \ge P_{\nu_\phi}(\phi)- P_{\nu_s}(\phi) = \frac{(a-a_0)^2}{2\sigma^2} \left(1+O\left(\sigma^{-2}(a-a_0)\right)\right).
\label{eq:near_final}
\end{equation}

Making $a-a_0 = \int\psi~d\nu- \int\psi~d\nu_\phi$ subject of this equation
gives
\[
 \int\psi~d\nu- \int\psi~d\nu_\phi \le C_{\phi, \psi}\sqrt2 \sigma\sqrt{P_{\nu_\phi}(\phi)- P_\nu(\phi)},
\]
where the constant $C_{\phi, \psi} \geq 1$ tends to $1$ as $\int \psi \, d\nu \to \int \psi \, d\nu_{\phi}$.
Continuing with  $\nu_s$, the equilibrium state of $\phi + s\psi$, we get the more precise form
\[
 \int\psi~d\nu_s - \int\psi~d\nu_\phi = \sqrt2 \sigma\sqrt{ P_{\nu_\phi}(\phi)- P_{\nu_s}(\phi) } + O\left(P_{\nu_\phi}(\phi)- P_{\nu_s}(\phi)\right).
\]
which can be rewritten as~\eqref{eq:ho} as required.
~\end{pfof}

\subsection{Proof of Theorem~\ref{thm:secmain}}

We shall need the following fact, which relies on the positivity
of $p''(s)$ given by Proposition~\ref{prop:gmsec}.

\begin{lemma}\label{trivial} Take $\beta/\gamma\in (1,3)$ and  $a \in (p'(0), p'(\delta_0))$, where $\delta_0$ is as in Proposition~\ref{prop:gm}.
Then $p''(s)>0$ for $s\in (0,\delta_0)$ and there exists a unique $s\in (0,\delta_0)$ satisfying $p'(s) = a$.
\end{lemma}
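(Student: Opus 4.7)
The plan is to deduce strict positivity of $p''$ on $(0,\delta_0)$ from the asymptotic formulas in Proposition~\ref{prop:gmsec} (possibly after shrinking $\delta_0$), and then invoke strict monotonicity and continuity of $p'$ together with the Intermediate Value Theorem.

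First I would split into the two regimes from Proposition~\ref{prop:gmsec}. In the regime $\beta/\gamma\in(1,2]$, Proposition~\ref{prop:gmsec}(i) yields $p''(s)=C_2 s^{\beta-\gamma-1}(1+o(1))$ as $s\to 0$. Since $\gamma\in(\beta-1,1)$ gives $\beta-\gamma-1<0$, the leading factor $s^{\beta-\gamma-1}$ tends to $+\infty$, so $p''(s)>0$ on some right neighbourhood of $0$. In the regime $\beta/\gamma\in(2,3)$, we are in the CLT range, so Proposition~\ref{prop:gm}(iii) applies and gives $p''(0)=\sigma^2>0$. From Proposition~\ref{prop:gmsec}(ii), $p'''(s)=-C_3 s^{\beta-2\gamma-1}(1+o(1))$ with exponent $\beta-2\gamma-1\in(-1,0)$ (because $\gamma<\beta-2\gamma<\gamma<1$ is impossible but $\beta-2\gamma\in(0,\gamma)\subset(0,1)$), so $p'''$ is integrable near $0$ and
\[
p''(s)=p''(0)+\int_0^s p'''(t)\,dt=\sigma^2-\tfrac{C_3}{\beta-2\gamma}\,s^{\beta-2\gamma}(1+o(1)),
\]
which is strictly positive for $s$ small. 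By shrinking $\delta_0$ if necessary (which is permissible since the $\delta_0$ of Proposition~\ref{prop:gm} is only an upper bound), we may therefore assume $p''(s)>0$ for every $s\in(0,\delta_0)$ in both regimes.

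Having established positivity of $p''$ on $(0,\delta_0)$, the function $p'$ is strictly increasing on $[0,\delta_0)$. By Proposition~\ref{prop:gm}(ii), $p'$ is continuous on $[0,\delta_0)$. Given any $a\in(p'(0),p'(\delta_0))$, the Intermediate Value Theorem furnishes some $s\in(0,\delta_0)$ with $p'(s)=a$, and strict monotonicity forces uniqueness. The anticipated obstacle is precisely the second regime, where positivity of $p''$ is not a direct consequence of the leading-order asymptotic of Proposition~\ref{prop:gmsec}: one has to combine the finite positive value $p''(0)=\sigma^2$ (coming from the separate input of Proposition~\ref{prop:gm}(iii)) with the integrability of the singular $p'''$ near $0$ to conclude that the perturbation of $p''(0)$ is small.
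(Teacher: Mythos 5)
Your proposal is correct and follows essentially the same route as the paper: split into the regimes $\beta/\gamma\in(1,2]$ (where Proposition~\ref{prop:gmsec}(i) gives $p''(s)>0$ directly near $0$) and $\beta/\gamma\in(2,3)$ (where $p''(0)=\sigma^2>0$ from Proposition~\ref{prop:gm}(iii) is combined with the controlled, integrable blow-up of $p'''$ from Proposition~\ref{prop:gmsec}(ii)), then conclude by strict monotonicity and the Intermediate Value Theorem. Your write-up is actually a bit more explicit than the paper's about why $\int_0^s p'''$ is a small perturbation of $\sigma^2$ and about the (harmless) need to shrink $\delta_0$; the only blemish is the garbled parenthetical ``$\gamma<\beta-2\gamma<\gamma<1$ is impossible'', whose intended content ($\beta-2\gamma\in(0,\gamma)\subset(0,1)$, hence $\beta-2\gamma-1\in(-1,0)$) is nonetheless correct.
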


\begin{proof}
By Proposition~\ref{prop:gmsec}, both for $\beta/\gamma\in (1,2]$ and for $\beta/\gamma\in (2,3)$,
the first derivative $p'$ is bounded. 
For $\beta/\gamma\in (1,2)$, the positivity of $p''(s)$ is given by Proposition~\ref{prop:gmsec} (i).
For the case $\beta/\gamma\in (2,3)$,  Proposition~\ref{prop:gm}(iii) ensures that $p''(0)=\sigma^2$. This together
with Proposition~\ref{prop:gmsec} (ii) gives the positivity of $p''(s)$ when $\beta/\gamma\in (2,3)$.
It follows that $p'$ is a strictly increasing function and the conclusion follows.
\end{proof}

\begin{pfof}{Theorem~\ref{thm:secmain}}
 Let $a_0=\int\psi~d\nu_\phi$ and $a=\int\psi~d\nu$ and assume $a>a_0$.
 By Lemma~\ref{trivial}, $p'(s)=a$ has a unique solution.
This allows us to repeat the argument recalled in obtaining~\eqref{eq:mu_s} and to obtain $q(a)=p(s)-sa$.
As in the proof of Theorem~\ref{thm:firstmain}, recall that $q(a_0)=P_{\nu_\phi}(\phi)$
and $q(a)=P_{\nu_s}(\phi)$, where $\nu_s$ is the unique equilibrium measure for $\psi+s\psi$.
 Let $\nu$ be any $\F$-invariant probability measure so that
$a=\int_{Y^\tau}\psi\, d\nu>a_0=\int_{Y^\tau}\psi\, d\nu_{\phi}$.\\
  
 \textbf{Proof of item (a), the case $\bm{\beta/\gamma\in(1,2]}$.}
 Note that $a-a_0=p'(s)-p'(0)$. Using   Proposition~\ref{prop:gmsec}(i),
\begin{align*}
a-a_0= s p''(s) (1+o(1))=C_2\, s\, s^{\beta-\gamma-1}(1+o(1))=C_2s^{\beta-\gamma}(1+o(1)),
\end{align*}
and so, 
\begin{align}\label{aa000}
s=\left(\frac{a-a_0}{C_2}\right)^{1/(\beta-\gamma)}(1+o(1)).
\end{align}Since $q(a_0)=0$, $q(a_0)-q(a)  = sp'(s)-p(s)$.
The Taylor expansion with remainder gives $p(y)=p(x)+p'(x)(y-x)+\int_x^y (y-\xi) p''(\xi)\, d\xi$.
Taking $y=0$ and $x=s$,
\(
q(a_0)-q(a) =sp'(s)-p(s)=\int_0^s \xi p''(\xi)\, d\xi.
\)
By Proposition~\ref{prop:gmsec}(i), we have
\begin{align}\label{eq:qq01}
q(a_0)-q(a) &=\int_0^s \xi \left(C_2 \xi^{\beta-\gamma-1}(1+o(1))\right)\, d\xi= 
\frac{\gamma}{\beta} C_2 s^{\beta-\gamma+1} (1+o(1)) \nonumber \\
&=\frac{\gamma}{\beta} C_2\left(\frac{a-a_0}{C_2}\right)^{\frac{\beta-\gamma+1}{\beta-\gamma}}(1+o(1)),
\end{align}
where in the second equality we have used~\eqref{aa000}. So, there is $c_2 > 0$ such that
\[
a-a_0=c_2 \left(q(a_0)-q(a)\right)^{\frac{\beta-\gamma}{\beta-\gamma+1}}(1+o(1)),
\]
For an arbitrary measure $\nu$ we have
$P_{\nu_\phi}(\phi)- P_\nu(\phi) \ge P_{\nu_\phi}(\phi)- P_{\nu_s}(\phi)$
as in~\eqref{eq:near_final}, we have
\[
 \int\psi\,d\nu-\int\psi\,d\nu_{\phi}\le c_2 (P_{\nu_\phi}(\phi)-P_\nu(\phi))^{\frac{\beta-\gamma}{\beta-\gamma+1}}
 \]
as required. For the equilibrium state $\nu_s$ itself, we have the more precise estimate with $c_2 = \frac{\beta}{\gamma}C_2$:
\[
 \int\psi\,d\nu_s-\int\psi\,d\nu_{\phi} = c_2 (P_{\nu_\phi}(\phi)-P_{\nu_s}(\phi))^{\frac{\beta-\gamma}{\beta-\gamma+1}} (1+o(1)),
 \]
which can be rewritten to~\eqref{eq:heavyHold}.\\

 \textbf{Proof of item (b), the case $\bm{\beta/\gamma\in (2,3)}$.}
 Using   Proposition~\ref{prop:gmsec}(ii) and Taylor's theorem, we have

\begin{align}\label{eq:aa0}
a-a_0&= p'(s) - p'(0) = s p''(0) + \int_0^s \xi p'''(\xi) \, d\xi
= s\sigma^2 + O(s^{\beta-2\gamma+1}).
\end{align}
Therefore
\begin{align}\label{aa0third}
s=\frac{a-a_0}{\sigma^2}\left(1+O(s^{\beta-2\gamma})\right).
\end{align}
By Taylor's theorem,
$p(s)=p(0)+ sp'(0)+\frac{s^2}{2}p''(0)+\int_0^s\xi^2 p'''(\xi)\, d\xi$.
This together with Proposition~\ref{prop:gmsec}(ii) (and recalling $p''(0)=\sigma^2$ and $p(0)=0$) gives
\begin{align*}
q(a_0)-q(a) &=sp'(s)-p(s)\\
&=sp'(s) - \left(p(0) + sp'(0) + \frac{s^2}{2} p''(0) + \int_0^s \xi^2 p'''(\xi) \, d\xi \right)\\
&=s(p'(s)-p'(0)) - \frac{s^2}{2} \sigma^2 -  \int_0^s \xi^2 p'''(\xi) \, d\xi\\
&=s^2\sigma^2 + O(s^{\beta-2\gamma +2}) - \frac{s^2}{2} \sigma^2 + O(s^{\beta-2\gamma +2})
= \frac{s^2}{2} \sigma^2 (1 + O(s^{\beta-2\gamma})),
\end{align*}
where we used~\eqref{eq:aa0} in the last line.
This together with~\eqref{aa0third} gives
\begin{align}\label{eq:qq02}
q(a_0)-q(a) &= \frac{(a-a_0)^2}{2\sigma^2} \left( 1+ O\left((a-a_0)^{\beta-2\gamma}\right) \right).
\end{align}
Since for an arbitrary measure $\nu$ we have again
\[
 \int\psi\,d\nu-\int\psi\,d\nu_{\phi}\le c_3 \sqrt{ P_{\nu_\phi}(\phi)-P_\nu(\phi)}
 \]
 for some $c_3 \geq 1$.
For the equilibrium state $\nu_s$ itself, we have the more precise estimate:
\[
 \int\psi\,d\nu_s-\int\psi\,d\nu_{\phi} =
\sigma \sqrt{2}  \sqrt{ P_{\nu_\phi}(\phi)-P_{\nu_s}(\phi) } \left( 1+O\left(\left(P_{\nu_\phi}(\phi)-P_{\nu_s}(\phi) \right)^{\frac{\beta-2\gamma}{2}}\right) \right).
 \]
This can be rewritten to~\eqref{eq:heavyquad}
 \end{pfof}

\subsection{When $a=\int\psi~d\nu$ is much larger than $\int\psi~d\nu_\phi$}

\begin{pfof}{Theorem~\ref{thm:bigint}}
 First notice that from (GM1) and Abramov's formula that $\int\psi~d\nu<C_0$, so  $C_{\phi, \psi}'':=\max\left\{\int\psi~d\nu_\phi, C_0\right\}$ and set $\psi':=\frac{\psi}{C_{\phi, \psi}''}$.  We will use $q= q_{\phi, \psi'}$ and (implicitly) $p=p_{\phi, \psi'}$ here.

We follow the proof of \cite[Theorem 7.1]{RuhSar22}.
The following is an analogue of \cite[Lemma 5.1]{RuhSar22}.

\begin{lemma}
\begin{itemize}
\item[(a)] $q= q_{\phi, \psi'}$  is well defined and finite on $\left(\int\psi'~d\nu_\phi, \sup_{\nu\in\M_F} \int\psi'~d\nu\right)$;
\item[(b)]
$q= q_{\phi, \psi}$ is concave on the domain on $\left(\int\psi~d\nu_\phi, \sup_{\nu\in\M_F} \int\psi'~d\nu\right)$. 
\end{itemize}
\label{lem:concave}
\end{lemma}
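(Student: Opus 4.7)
The plan is to adapt \cite[Lemma 5.1]{RuhSar22} to the suspension-flow setting, leveraging the rescaling $\psi' = \psi / C''_{\phi, \psi}$ only to normalise ranges; the structural ingredients (well-definedness of the sup, finiteness via $P(\phi)$, concavity via convex combinations) are intrinsic and independent of this scaling. I would split the two parts and handle them separately, since concavity of $q_{\phi, \psi}$ and $q_{\phi, \psi'}$ are equivalent after a positive linear change of variable in the argument.

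For part (a), let $a_0 = \int\psi'\,d\nu_\phi$ and $a_{\max} = \sup_{\nu\in \M_F} \int\psi'\,d\nu$, and fix $a \in (a_0, a_{\max})$. By definition of $a_{\max}$, there exists $\nu_2 \in \M_F$ with $a_2 := \int\psi'\,d\nu_2 > a$. Set $\lambda = (a - a_0)/(a_2 - a_0) \in (0,1)$ and consider $\nu := \lambda\nu_2 + (1-\lambda)\nu_\phi \in \M_F$. Linearity of integration gives $\int\psi'\,d\nu = a$, witnessing that the constraint set in the definition of $q_{\phi, \psi'}(a)$ is non-empty. Finiteness is immediate: $q(a) \le \sup_{\nu \in \M_F} P_{F,\nu}(\phi) = P(\phi) = 0$ under the standing normalisation.

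For part (b), I would use the standard convex-combination argument exploiting the affinity of the entropy map $\nu \mapsto h_F(\nu)$ on $\M_F$ together with linearity of integration. Given $a_1, a_2$ in the stated interval, $\lambda \in (0,1)$, and $\varepsilon>0$, choose $\nu_i \in \M_F$ with $\int\psi\,d\nu_i = a_i$ and $P_{F, \nu_i}(\phi) \ge q(a_i) - \varepsilon$. Then $\nu_\lambda := \lambda \nu_1 + (1-\lambda)\nu_2 \in \M_F$ satisfies $\int\psi\,d\nu_\lambda = \lambda a_1 + (1-\lambda) a_2$, and
\[
P_{F, \nu_\lambda}(\phi) = \lambda P_{F, \nu_1}(\phi) + (1-\lambda) P_{F, \nu_2}(\phi) \ge \lambda q(a_1) + (1-\lambda) q(a_2) - \varepsilon,
\]
so $q(\lambda a_1 + (1-\lambda) a_2) \ge \lambda q(a_1) + (1-\lambda) q(a_2) - \varepsilon$. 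Letting $\varepsilon \downarrow 0$ yields concavity of $q_{\phi, \psi}$, which transfers to $q_{\phi, \psi'}$ by the aforementioned reparametrisation.

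The only genuine subtlety, and the main potential obstacle, is that under (GM1) the potential $\psi$ is unbounded, so a priori $\int\psi\,d\nu$ could be $-\infty$ for some $\nu \in \M_F$ and $P_{F, \nu}(\phi)$ might be $-\infty$. This is handled automatically by the definition of $q$ (which excludes $\nu$ with $P_{F, \nu}(\phi) = -\infty$ from being approximate maximisers) and by restricting $\nu_i$ to have $\int\psi\,d\nu_i = a_i$ finite. Since lifts to $\M_T(\tau)$ are linear under convex combinations and Abramov's and Kac's formulas preserve finiteness, $\nu_\lambda$ inherits finite entropy and integral, so every quantity appearing above is well-defined.
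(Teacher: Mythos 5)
Your proof takes essentially the same route as the paper: both parts mirror \cite[Lemma 5.1]{RuhSar22}, with part (a) obtained by exhibiting a convex combination of measures with the target $\psi'$-integral and part (b) by the usual affinity-of-entropy plus linearity-of-integration argument. Your observation that concavity of $q_{\phi,\psi}$ and $q_{\phi,\psi'}$ are equivalent under a positive linear reparametrisation of the argument is a clean simplification that the paper does not explicitly make.

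There is, however, a genuine gap in part (a). You write ``Finiteness is immediate: $q(a) \le \sup_{\nu\in\M_F}P_{F,\nu}(\phi) = P(\phi) = 0$,'' but this only establishes $q(a)<+\infty$. The nontrivial half of finiteness is $q(a)>-\infty$: the supremum in the definition of $q$ runs over all $\nu\in\M_F$ with $\int\psi'\,d\nu=a$ without any a priori restriction to finite-pressure measures, so one must produce at least one $\nu$ on that fiber with $P_{F,\nu}(\phi)>-\infty$. Your constructed measure $\nu=\lambda\nu_2+(1-\lambda)\nu_\phi$ is the natural candidate, and since $\nu_\phi$ has zero pressure, by affinity it suffices to ensure $P_{F,\nu_2}(\phi)>-\infty$, i.e.\ that $\nu_2$ can be chosen with both $\int\psi'\,d\nu_2>a$ and finite pressure for $\phi$. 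You gesture at this in your final paragraph (lifts to $\M_T(\tau)$, Abramov and Kac preserve finiteness), but you never connect it to the finiteness claim. The paper does not spell this out either; it explicitly defers to ``the same argument pushed to the suspension flow, of \cite[Lemma 5.1]{RuhSar22}'' for the bound $q(a)>-\infty$. You should either do the same or supply the one-line argument ensuring $\nu_2$ can be taken with finite $\phi$-pressure, then conclude $q(a)\ge P_{F,\nu}(\phi)>-\infty$.
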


\begin{proof}
For part (a) we follow the proof of \cite[Lemma 5.1]{RuhSar22}, but since in general we do not have information on $p_{\phi, \psi'}(t)$ for $t<0$, or the topological entropy of $F$,  we start by assuming that $a\in \left(\int\psi'~d\nu_\phi, \sup_{\nu\in\M_F} \int\psi'~d\nu\right)$.  Note that the theory above (more precisely, the arguments use inside the proofs of Theorems~\ref{thm:firstmain} and~\ref{thm:secmain})  shows that $q$ is well defined in a subset of this set, but here we look to extend this.  The choice of $a$ implies there exist $\nu_1, \nu_2\in \M_F$ such that 
$$\int\psi'~d\nu_1<a<\int\psi'~d\nu_2,$$
so as in  \cite[Lemma 5.1]{RuhSar22}, $\int\psi'~d\nu=a$ for some convex combination of $\nu_1$ and $\nu_2$ and the supremum defining $q$ is over a non-empty set and it is well-defined.  The same argument pushed to the suspension flow, of  \cite[Lemma 5.1]{RuhSar22} implies that $q(a)>-\infty$.

Finally, the proof of part (b) is identical to the latter part of the proof of \cite[Lemma 5.1]{RuhSar22}.  
\end{proof}

For the next step we follow a slightly coarser version of the proof of \cite[Corollary 5.1(2)]{RuhSar22}.  The first step is to show that $q$ is strictly decreasing.  We note that the proofs of  Proposition~\ref{prop:gm} or \ref{prop:gmsec} imply that $p$ is analytic in some interval $(\eps_1, \eps_2)$ for $\eps_1, \eps_2>0$, where $\eps_1$ can be taken arbitrarily close to 0.  The same arguments as in \cite[Lemma 5.2]{RuhSar22}, see in particular (5.3), then also imply that $q$ is differentiable and strictly concave on some interval $(a_0', a_1)$ where $a_0'$ can be taken arbitrarily close to $a_0=\int\psi'~d\nu_\phi=p'(0)$, and moreover $q'(p'(t))=-t$ for $p'(t)\in (a_0', a_1)$.  The key fact we then take from this is that $q(a_1)<q(a_0)$ so we set $\eta:= \frac{q(a_0)-q(a_1)}{a_1-a_0}>0$.  Then since Lemma~\ref{lem:concave} implies that $q$ is concave for $a>a_0$, so for $a>a_1$ we have $q(a)-q(a_0)< -\eta (a-a_0)$.  

Given that $a=\int\psi'~d\nu$, as in the proof of Theorem~\ref{thm:firstmain} or \ref{thm:secmain}, the definition of $q$ implies $P_\nu(\phi)\le q(a)$ and hence we can interpret the inequality above as: if $a\ge a_1$ then 
\begin{equation}
q(a_0)-q(a)\ge \eta(a-a_0)\Longrightarrow \int\psi'~d\nu-\int\psi'~d\nu_\phi\le \frac1{\eta} (P(\phi)-P_\nu(\phi)).\label{eq:linapprox}
\end{equation}

Then following the argument of the proof of \cite[Theorem 7.1]{RuhSar22}.  From \eqref{eq:linapprox}, if $\int\psi~d\nu>a_1$ then
$$\frac12\left(\int\psi'~d\nu-\int\psi'~d\nu_\phi\right)\le \frac1{2\eta} (P(\phi)-P_\mu(\phi)).$$
Also then noticing that $\frac12\left(\int\psi'~d\nu-\int\psi'~d\nu_\phi\right)\le 1$ we trivially have 
$$\frac12\left(\int\psi'~d\nu-\int\psi'~d\nu_\phi\right)\le \left(\frac12\left(\int\psi'~d\nu-\int\psi'~d\nu_\phi\right)\right)^\rho$$ for any $\rho\in (0,1)$ (for example $\rho=1/2$).  Thus,
$$\int\psi~d\nu-\int\psi~d\nu_\phi\le \frac{2C_{\phi, \psi}''}{(2\eta)^\rho}\left(P(\phi)-P_\nu(\phi)\right)^\rho.$$

We set  $C_{\phi, \psi}'$ to be the maximum of $ \frac{2C_{\phi, \psi}''}{(2\eta)^\rho}$  and the constant coming from our main theorems.
\end{pfof}

\section{Applications} \label{sec:examples}

We provide examples of systems, both of discrete and continuous time, for which our main results  apply. 
These are systems with weak forms of hyperbolicity that have not been studied before form this point of view.

\subsection{Intermittent interval maps} \label{sec:int} Zweim\"uller \cite{zw} introduced a class of interval maps 
$f:[0,1] \to [0,1]$ that he called AFN maps, i.e., non-uniformly expanding maps with finitely many branches, 
finitely many neutral fixed points, and satisfying Adler's distortion property ($f''/f'^{2}$ bounded). 
Note that AFN maps are, in general, non-Markov. 
We stress that these are maps with weak hyperbolicity properties. Let $\alpha\in (0,1)$ and $b\in (0, 1]$ consider the 
family of AFN maps defined by
\begin{eqnarray*}
 f(x) = f_{\alpha, b}(x) = \begin{cases} x(1+ 2^\alpha x^\alpha) & \text{ if } x\in [0, 1/2],\\
b(2x-1) & \text{ if } x\in (1/2, 1].
\end{cases}
\end{eqnarray*}
It follows from \cite{zw} that for this range of values of the parameters $\alpha$ and $b$, 
there exists an absolutely continuous probability measure $\mu$. Moreover, the first return time map
to $Y= (1/2, 1]$ 
is uniformly expanding, although it may not be Markov. In \cite[Section 9]{BruTer18}, 
a Gibbs-Markov inducing scheme for $Y$ with return time $\tau$ is constructed. That is, there exists a countable partition 
of $Y$ so that $\tau$ is constant on each of the elements of the partition and the map $T:Y\to Y$ defined by $T=f^\tau$ is Gibbs-Markov.  
The map $T$ can be thought of as a discrete suspension of $f$ with roof function $\tau$. Moreover, for a potential $\psi:[0,1] \to \R$ 
its induced version $\bar \psi: Y \to \R$ is defined by $\bar \psi = \sum_{j=0}^{\tau -1} \psi \circ f^j$.
In particular, our main results can be applied to this discrete time system.  
We now verify that under certain conditions the assumptions of our results are indeed satisfied. We begin with Theorem~\ref{thm:firstmain}.

It was was established in \cite[Section 9]{BruTer18}  that for $\beta=1/\alpha$ there exists $c>0$ such that  the following bound on the tails holds,
\begin{equation*}
\mu_Y(\tau\ge n)\sim cn^{-\beta}.
\end{equation*}
That is, assumption (GM0) is fulfilled.

Note that if $\alpha \in (0, 1/2)$ then $\beta > 2$  and   if $\alpha \in (1/2,1)$ then $\beta \in (1,2)$.
 
Recall that (GM1) is an assumption on the induced version of a potential $\psi$. It states that there exists 
$\gamma  \in (\beta-1, \beta)$ such that $\bar \psi=C_0 - \psi_0$ with $0 \leq \psi_0 \leq C_1 \tau^{\gamma}$. 
The last assumption in  Theorem~\ref{thm:firstmain}, besides (GM0) and (GM1), is that $q_1>3$, which in particular implies that  
$\beta / \gamma >3$. Under the assumptions of (GM1) we  have that $\beta / \gamma \in (1, \beta / (\beta -1))$. Also, for $\beta >2$ we have $\beta / (\beta -1)<2$. Thus, if $\alpha \in (0, 1/2)$ then the assumptions  of Theorem~\ref{thm:firstmain} can not be satisfied  ($q_1$ is always smaller than 3). However, for $\alpha \in (1/2, 1)$ the result holds.

\begin{prop} \label{prop:1}
The conclusions of Theorem~\ref{thm:firstmain} hold for the induced system $(T, \mu_Y)$ with  $\alpha \in (1/2,1)$ and 
$\psi:[0,1] \to \R$ a H\"older function such that  $\psi(x) = -x^{(1-\gamma)\alpha}$ for 
$\gamma \in ((1-\alpha)/\alpha, \alpha /(\alpha +1))$, $\beta / \gamma >3$ and $x$ in a neighbourhood of $0$.

In the case $\beta>3$ we can consider the case $\gamma=1$ in this setting.  Here we can for example choose $\psi$ to be H\"older and negative (bounded below by $-C_1$) in $Y^c$ and to be equal to $C_0$ and Theorem~\ref{thm:firstmain} holds.
\end{prop}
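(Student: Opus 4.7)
\begin{pfof}{Proposition~\ref{prop:1}}
The plan is to verify the three hypotheses of Theorem~\ref{thm:firstmain}: namely (GM0), (GM1) and $q_1>3$. The first is provided by~\cite[Section 9]{BruTer18}, which shows that $T=f^\tau$ is Gibbs-Markov on $Y=(1/2,1]$, that $\mu_Y(\tau\ge n)\sim cn^{-\beta}$ with $\beta=1/\alpha$, and that $\tau$ is constant on each partition element $a_n=\{\tau=n\}$ (so~\eqref{eq:inf} for $\tau$ is automatic, and $\essinf\tau\ge 1$). The condition $q_1>3$ amounts to choosing $q_1\in(3,\beta/\gamma)$ once a valid $\gamma$ is fixed, which is possible under the assumption $\beta/\gamma>3$.

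The core of the argument is the asymptotic $\bar\psi\asymp -\tau^\gamma$. For $y\in a_n$, the orbit $f(y),\dots,f^{n-1}(y)$ lies in $[0,1/2]$ and tracks the parabolic fixed point at $0$. A standard calculation starting from the local form $f(x)^{-\alpha}=x^{-\alpha}-\alpha 2^\alpha+O(x^\alpha)$ and iterating backwards from $f^n(y)\in Y$ yields
\[
 f^j(y)^{-\alpha}=\alpha 2^\alpha(n-j)+O(\log n),\qquad\text{so}\qquad f^j(y)\asymp (n-j)^{-1/\alpha},
\]
for $1\le j\le n-1$. Substituting the local form $\psi(x)=-x^{(1-\gamma)\alpha}$ and summing,
\[
 -\bar\psi(y)=\sum_{j=1}^{n-1}f^j(y)^{(1-\gamma)\alpha}+O(1)\asymp \sum_{k=1}^{n-1}k^{-(1-\gamma)}\asymp n^{\gamma},
\]
using $\gamma\in(0,1)$ (ensured by $\gamma<\alpha/(\alpha+1)<1$). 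The bounded contributions from the finitely many iterates outside a fixed neighbourhood of $0$, where $\psi$ is a generic H\"older function, are absorbed into the $O(1)$ remainder. Choosing $C_0$ strictly larger than $\sup\bar\psi$ (which is finite, since $\bar\psi\to-\infty$ along $a_n$) and setting $\psi_0:=C_0-\bar\psi$ gives $\bar\psi=C_0-\psi_0$ with $\essinf\psi_0>0$ and $\psi_0\le C_1\tau^\gamma$ for some $C_1>0$.

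The remaining pieces of (GM1) can be checked directly. The range condition $\gamma\in((1-\alpha)/\alpha,\alpha/(\alpha+1))\subset(\beta-1,\beta)$ is the hypothesis of the proposition. Property~\eqref{eq:inf} for $\psi_0$ on $a_n$ follows from the fact that $\tau$ is constant on $a_n$, that $\bar\psi$ has small relative variation on $a_n$ (controlled by the H\"older norm of $\psi$ together with the uniform expansion of $T|_{a_n}$ and bounded distortion), and that $\inf(\psi_0\mathbf{1}_{a_n})\asymp n^\gamma$ for large $n$. Finally, $\int\psi\,d\nu_\phi>0$ is obtained by replacing $\psi$ with $\psi+c\cdot \mathbf{1}_Y$ for a suitable constant $c$, as in Remark~\ref{rmk:p>0}: this shifts $\bar\psi$ by the constant $c$ (since $\sum_{j=0}^{\tau-1}\mathbf{1}_Y\circ f^j=1$) without affecting the tail asymptotic of $\psi_0$ or~\eqref{eq:inf}. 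Theorem~\ref{thm:firstmain} then yields the stated conclusion. The final assertion (the $\gamma=1$, $\beta>3$ case) is handled identically but more easily: $\psi$ being bounded yields $\bar\psi\ge C_0-C_1\tau$ trivially with $C_1\ge\|\psi\|_\infty$, and no parabolic asymptotic is required.

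The main obstacle is the sharp tail estimate $\bar\psi\asymp -\tau^\gamma$ in the second paragraph, where the exponent $(1-\gamma)\alpha$ in the local form of $\psi$ must be tuned precisely against the parabolic asymptotic $f^j(y)\asymp j^{-1/\alpha}$ in order to produce the exponent $\gamma$ on $\tau$. Everything else is a routine check of the structural and regularity requirements of (GM0) and (GM1).
\end{pfof}
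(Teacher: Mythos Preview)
Your proof is correct and follows essentially the same route as the paper's own argument. The only difference is one of presentation: where the paper cites \cite[Proposition 8.5]{BruTerTod19} for the key asymptotic $\bar\psi(x)\sim C-\tau(x)^\gamma$, you sketch the underlying parabolic computation directly (the standard estimate $f^j(y)\asymp (n-j)^{-1/\alpha}$ followed by summation), and you also spell out the auxiliary checks for~\eqref{eq:inf} and the sign condition on $\int\psi\,d\nu_\phi$ that the paper leaves implicit.
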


\begin{proof}
We already established that assumption (GM0) is satisfied.  It was proved in \cite[Proposition 8.5]{BruTerTod19} that if $\gamma \in (0, \alpha / (\alpha +1))$ then the induced potential satisfies 
$\bar\psi(x) \sim C-\tau(x)^\gamma$ as $x\to 1/2$. Thus, the parameter $\gamma$ has to be chosen from the set 
$(\beta-1, \beta) \cap (0, \alpha / (\alpha +1))$ so as   $\beta / \gamma >3$. These conditions are compatible, 
so we can assume that $q_1>3$ and that (GM1) is fulfilled.

For the final statement note that in this setting $\bar\psi(x) = C_0-\psi_0(x)$ where $0\le \psi_0(x) \le C_1\tau(x)$.
 \end{proof}

Similarly, we obtain a version of  Theorem~\ref{thm:secmain}  in the same range of values of $\alpha$, but for a different range of values of $\gamma$.

\begin{prop} \label{prop:2}
The conclusions of Theorem~\ref{thm:secmain} hold for the induced system $(T, \mu_Y)$ with  $\alpha \in (1/2,1)$ and 
$\psi:[0,1] \to \R$ a function such that there exists $\gamma \in (\beta-1, 1)$ for which $\bar \psi = C_0-C_1 \tau^{\gamma}$. 
Both cases, $\beta /\gamma \in (1,2]$ and $\beta /\gamma \in (2,3)$, occur.
\end{prop}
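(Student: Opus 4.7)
The plan is to verify the hypotheses of Theorem~\ref{thm:secmain} for the induced Gibbs-Markov system $(T,\mu_Y)$ obtained in Section~\ref{sec:int}, and then to check that both parameter regimes $\beta/\gamma\in(1,2]$ and $\beta/\gamma\in(2,3)$ are realisable by appropriate choices of $\gamma$. Since $\beta=1/\alpha$, the range $\alpha\in(1/2,1)$ gives $\beta\in(1,2)$, matching the restriction imposed in Theorem~\ref{thm:secmain}. The structure is essentially parallel to the proof of Proposition~\ref{prop:1}, the differences being the sharper tail information needed and the different admissible interval for~$\gamma$.

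First, for (GM0) in its refined form $\mu_Y(\tau\ge x)=cx^{-\beta}(1+o(1))$, I will invoke the construction from \cite[Section 9]{BruTer18} already used in Proposition~\ref{prop:1}: it yields a Markov partition of $Y=(1/2,1]$ on which $T=f^\tau$ is topologically mixing and Gibbs-Markov, and the precise asymptotics $\mu_Y(\tau\ge n)\sim cn^{-\beta}$. Because $\tau$ is constant on each partition element, $\essinf\tau>0$ and the condition~\eqref{eq:inf} are automatic.

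Next, for (GM1) in the form required by Theorem~\ref{thm:secmain}, namely $\psi_0=C_1\tau^\gamma$, the assumption $\bar\psi=C_0-C_1\tau^\gamma$ gives the identification directly. Since $\psi_0$ is then also constant on partition elements, $\essinf\psi_0>0$ and~\eqref{eq:inf} hold; the hypothesis $\gamma\in(\beta-1,1)$ matches exactly the range demanded by Theorem~\ref{thm:secmain}; and the positivity $\int\psi\,d\nu_\phi>0$ can be arranged by taking $C_0$ sufficiently large, as explained in Remark~\ref{rmk:p>0}.

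Finally, I will show that both regimes occur by intersecting the admissible interval $(\beta-1,1)$ with the relevant range for $\gamma$. For $\beta/\gamma\in(1,2]$ we need $\gamma\in[\beta/2,1)$; since $\beta<2$ we have $\beta/2<1$ and $\beta/2>\beta-1$, so the intersection contains $[\beta/2,1)$ and is non-empty. For $\beta/\gamma\in(2,3)$ we need $\gamma\in(\beta/3,\beta/2)\cap(\beta-1,1)$; the inequality $\max(\beta-1,\beta/3)<\beta/2$ is equivalent to $\beta<2$, so this intersection is non-empty too. With the hypotheses of Theorem~\ref{thm:secmain} verified in each regime, its conclusions follow by direct application. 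The main obstacle is merely the bookkeeping of these elementary interval inequalities; there is no deeper difficulty since the analytic work is done by Theorem~\ref{thm:secmain} itself.
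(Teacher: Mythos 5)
Your proposal is correct and follows essentially the same route the paper takes (the paper does not set out a formal proof environment for this proposition, but its implicit argument is precisely the one you give: invoke the Gibbs-Markov inducing scheme of Bruin--Terhesiu for (GM0), observe that $\bar\psi=C_0-C_1\tau^\gamma$ directly realizes (GM1) with $\psi_0=C_1\tau^\gamma$ constant on partition elements, and note that the interval $(\beta-1,1)$ meets both $[\beta/2,1)$ and $(\beta/3,\beta/2)$ precisely when $\beta<2$, i.e.\ $\alpha\in(1/2,1)$). The one thing the paper adds that you omit is an explicit construction (for $b=1$) of a function $\psi:[0,1]\to\R$ whose induced version is exactly $C_0-C_1\tau^\gamma$, which is a non-emptiness remark rather than a step needed to prove the conditional statement as posed.
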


In the case $b=1$ a construction to produce $\psi$ as above is given as follows.  Let $x_0=1$ and $x_n=f_L^{-n}(1/2)$, where $f_L$ is the left branch of $f$.  Then on the intervals $X_n:= (x_n, x_{n-1}]$ define $\psi|_{X_1} = C_0-C_1$ and $\psi|_{X_n} =C_1(-n^\gamma+(n-1)^\gamma)$, so for $x$ having $\tau(x) = n$, $\bar\psi = C_0+C_1\sum_{k=1}^n(-n^\gamma+(n-1)^\gamma) = C_0 - C_1n^\gamma$, as required.

Observe that for $\alpha \in (0, 1/2)$ we have $\beta >2$ and for Theorem~\ref{thm:secmain} to hold we require 
$\beta \in (1,2)$. Therefore, the appropriate range of values of $\alpha$ in order to apply our main results is $(1/2, 1)$.

\subsection{Suspensions over intermittent interval maps}

In this section we consider suspension flows over the induced map $T$ defined in Section~\ref{sec:int}. 
Essentially, this is a continuous time representation of $T$ that preserves its main properties. 
Let $\rho: Y \to \R^+$ be a H\"older function bounded away from zero. Let $\bar \tau: Y \to \R^+$ 
be defined by $\bar \tau (x)= \sum_{j=0}^{\tau(x)-1}
\rho(f^j x)$. Let $(\F_t)_t$ be the suspension (semi)flow with base map $T$ and roof function $\bar \tau$. 
Since $\rho$ is bounded, assumption (GM0) is satisfied (as in Section~\ref{sec:int}) for the measure $\mu_Y$. 

A standard tool to construct examples in suspension flows is the following. Given a regular potential defined 
on the base space $g: Y \to \R$, construct a  continuous potential $\psi: Y^{\bar \tau} \to \R$ so that its 
induced version coincides with $g$, that is $\bar \psi = g$. Details of this type of construction can be found 
in \cite{brw}, minor adaptations are required in this setting. Since the assumptions of our main results are 
in terms of the induced potentials, this tool allows us to state flow versions of Propositions~\ref{prop:1} and~\ref{prop:2}. 
Indeed, we just need to consider potentials $\psi: Y^{\bar \tau} \to \R$ so that its induced versions satisfy 
the properties of the induced potentials $\bar \psi$ in Propositions~\ref{prop:1} and~\ref{prop:2}.

\end{document}